\let\pa\partial
\let\na\nabla
\let\eps\varepsilon
\newcommand{\N}{{\mathbb N}}
\newcommand{\R}{{\mathbb R}}
\newcommand{\diver}{\operatorname{div}}
\newcommand{\K}{{\mathcal K}}
\newcommand{\tildeK}{\widetilde{{\mathcal K}}_\zeta}
\newcommand{\DD}{\mathrm{D}}
\newcommand{\E}{\mathbb{E}}
\newcommand{\afrak}{a}
\DeclareMathOperator*{\esssup}{ess\,sup}
\newtheorem{theorem}{Theorem}
\newtheorem{lemma}[theorem]{Lemma}
\newtheorem{proposition}[theorem]{Proposition}
\newtheorem{corollary}[theorem]{Corollary}
\begin{document}

\title[Porous-medium equation with fractional diffusion]{Analysis and 
mean-field derivation of a \\ porous-medium equation with fractional diffusion}

\author[L. Chen]{Li Chen}
\address{University of Mannheim, School of Business Informatics and Mathematics, 
68131 Mann\-heim, Germany}
\email{chen@math.uni-mannheim.de}

\author[A. Holzinger]{Alexandra Holzinger}
\address{Institute for Analysis and Scientific Computing, Vienna University of
	Technology, Wiedner Hauptstra\ss e 8--10, 1040 Wien, Austria}
\email{alexandra.holzinger@tuwien.ac.at}

\author[A. J\"ungel]{Ansgar J\"ungel}
\address{Institute for Analysis and Scientific Computing, Vienna University of
	Technology, Wiedner Hauptstra\ss e 8--10, 1040 Wien, Austria}
\email{juengel@tuwien.ac.at}

\author[N. Zamponi]{Nicola Zamponi}
\address{University of Mannheim, School of Business Informatics and Mathematics, 
68131 Mann\-heim, Germany}
\email{nzamponi@mail.uni-mannheim.de}

\date{\today}

\thanks{The second and third authors have been partially supported by the 
Austrian Science Fund (FWF), grants P30000, P33010, F65, and W1245. 
The fourth author acknowledges support from the Alexander von Humboldt Foundation.
This work has received funding from the European 
Research Council (ERC) under the European Union's Horizon 2020 research and 
innovation programme, ERC Advanced Grant no.~101018153.}

\begin{abstract}
A mean-field-type limit from stochastic moderately interacting many-particle
systems with singular Riesz potential is performed, 
leading to nonlocal porous-medium
equations in the whole space. The nonlocality is given by the inverse of a
fractional Laplacian, and the limit equation can be interpreted as a transport
equation with a fractional pressure.
The proof is based on Oelschl\"ager's approach and
a priori estimates for the associated diffusion equations, coming from
energy-type and entropy inequalities as well as parabolic regularity. 
An existence analysis of the fractional porous-medium equation is also provided, 
based on a careful regularization procedure, new variants of fractional
Gagliardo--Nirenberg inequalities, and the div-curl lemma. A consequence
of the mean-field limit estimates is the propagation of chaos property.
\end{abstract}

\keywords{Fractional diffusion, nonlocal porous-medium equation, existence analysis,
mean-field limit, interacting particle systems, propagation of chaos.}

\subjclass[2000]{35K65, 35R11, 60H10, 60H30.}

\maketitle



\section{Introduction}

The aim of this paper is to derive and analyze the following nonlocal
porous-medium equation:
\begin{equation}\label{1.eq}
  \pa_t\rho = \diver(\rho\na P), \quad P = (-\Delta)^{-s}f(\rho), \quad
	\rho(0)=\rho^0\quad\mbox{in }\R^d,
\end{equation}
where $0<s<1$, $d\ge 2$, and $f\in C^1([0,\infty))$ is a nondecreasing function
satisfying $f(0)=0$. This model describes a particle system that evolves
according to a continuity equation for the density $\rho(x,t)$
with velocity $v=-\na P$. The velocity
is assumed to be the gradient of a potential, which expresses Darcy's law. 
The pressure $P$ is related to the density in a nonlinear and nonlocal way through 
$P=(-\Delta)^{-s}f(\rho)$. The nonlocal operator $(-\Delta)^{-s}$ can be
written as a convolution operator with a singular kernel,
\begin{equation}\label{1.def}
  (-\Delta)^{-s}u = \K*u, \quad \K(x) = c_{d,-s}|x|^{2s-d}, \quad x\in\R^d,
\end{equation}
where $c_{d,-s}=\Gamma(d/2-s)/(4^s\pi^{d/2}\Gamma(s))$ and
$\Gamma$ denotes the Gamma function \cite[Theorem 5]{Sti18}.

If $s=0$, we recover the
porous-medium equation (for nonnegative solutions), 
while the case $s=1$ was investigated in \cite{CRS96,E94} with $f(u)=u$ 
for the evolution of the vortex density in a superconductor.
Other applications include particle systems with long-range interactions
and dislocation dynamics as a continuum \cite[Sec.~6.2]{Vaz17}.

Equation \eqref{1.eq} was first analyzed in \cite{CaVa11} with $f(u)=u$
for nonnegative solutions 
and in \cite{BIK11} with $f(u)=|u|^{m-2}u$ ($m>1$) for sign-changing solutions. 
The nonnegative solutions have the interesting property that they propagate with 
finite speed, which is not common in other fractional diffusion models
\cite{CaVa11,STV14}.
Equation \eqref{1.eq} was probabilistically interpreted in \cite{DeG19},
and it was shown that the probability density of a so-called random flight
process is given by a Barenblatt-type profile. 
Previous mean-field limits leading to \eqref{1.eq} were concerned
with the linear case $f(u)=u$ only; see \cite{Due16} (using the technique
of \cite{Ser17}) and \cite{ORT21}
(including additional diffusion as in \eqref{1.rho} below).
In \cite{ChJe21}, equation \eqref{1.eq} (with $f(u)=u$) 
was derived in the high-force regime from
the Euler--Riesz equations, which can be derived in the mean-field limit 
from interacting particle systems \cite{Ser20}. 
A direct derivation from particle systems with L\'evy noise was proved in
\cite{DPR20} for cross-diffusion systems, but still with $f(u)=u$.
Up to our knowledge, a rigorous derivation of \eqref{1.eq} 
from stochastic interacting particle systems for general nonlinearities $f(u)$ 
like power functions is missing in the literature. In this paper, we fill this gap.

\subsection{Problem setting}

Equation \eqref{1.eq} is derived from an interacting particle system with
$N$ particles, moving in the whole space $\R^d$. Because of the singularity
of the integral kernel and the degeneracy of
the nonlinearity, we approximate \eqref{1.eq} using three levels. 
First, we introduce a parabolic regularization adding a Brownian motion 
to the particle system with diffusivity $\sigma\in(0,1)$ and replacing $f$ by a 
smooth approximation $f_\sigma$. Second, we replace the interaction kernel 
$\K$ by a smooth kernel $\K_\zeta$ with compact support, where $\zeta>0$. 
Third, we consider
interaction functions $W_\beta$ with $\beta\in(0,1)$, which approximate the delta
distribution. (We refer to Subsection \ref{sec.main} for the precise definitions.)

The particle positions are represented on the {\em microscopic level}
by the stochastic processes $X_i^N(t)$ evolving according to
\begin{equation}\label{1.X}
\begin{aligned}
  & dX_i^N(t) = -\na\K_\zeta*f_\sigma\bigg(\frac{1}{N}\sum_{j=1,\,j\neq i}^{N}
	W_\beta(X_j^N(t)-X_i^N(t))\bigg)dt + \sqrt{2\sigma}dB_i^N(t), \\
	& X_i^N(0) = \xi_i, \quad i=1,\ldots,N,
\end{aligned}
\end{equation}
where the convolution has to be understood with respect to $x_i$,
$(B_i^N(t))_{t\ge 0}$ are independent $d$-dimensional Brownian motions defined
on a filtered probability space $(\Omega,\mathcal{F},\mathcal{F}_t,\mathbb{P})$,
and $\xi_i$ are independent identically distributed random variables in $\R^d$
with the same probability density function $\rho^{0}_\sigma$ (defined in
\eqref{1.kappa} below). 

The mean-field-type limit is performed in three steps. First,
for fixed $(\sigma,\beta,\zeta)$, system \eqref{1.X} is approximated for $N\to\infty$
on the {\em intermediate level} by
\begin{equation}\label{1.barX}
\begin{aligned}
  & d\bar{X}_i^N(t) = -\na\K_\zeta*f_\sigma\big(W_\beta*\rho_{\sigma,\beta,\zeta}
	(\bar{X}_i^N(t),t)\big)dt + \sqrt{2\sigma}dB_i^N(t), \\
	& \bar{X}_i^N(0) = \xi_i, \quad i=1,\ldots,N,
\end{aligned}
\end{equation}
where $\rho_{\sigma,\beta,\zeta}$ is the probability density function of
$\bar{X}_i^N$ and a strong solution to 
\begin{equation}\label{1.rho3}
  \pa_t\rho_{\sigma,\beta,\zeta} - \sigma\Delta\rho_{\sigma,\beta,\zeta}
	= \diver\big(\rho_{\sigma,\beta,\zeta}
	\na\K_\zeta*f_\sigma(W_\beta*\rho_{\sigma,\beta,\zeta})
	\big), \quad \rho_{\sigma,\beta,\zeta}(0) = \rho^{0}_\sigma\quad\mbox{in }\R^d.
\end{equation}
System \eqref{1.barX} is uncoupled, since $\bar{X}_i^N$ depends on $N$ only
through the initial datum.

Second, passing to the limit $(\beta,\zeta)\to 0$ in the intermediate system leads on
the {\em macroscopic level} to
\begin{equation}\label{1.hatX}
\begin{aligned}
  & d\widehat{X}_i^N(t) = -\na\K* f_\sigma(\rho_\sigma(\widehat{X}_i^N(t),t))dt
	+ \sqrt{2\sigma}dB_i^N(t), \\
	& \widehat{X}_i^N(0) = \xi_i, \quad i=1,\ldots,N,
\end{aligned}
\end{equation}
where $\rho_\sigma$ is the density function of $\widehat{X}_i^N$ and a weak
solution to 
\begin{equation}\label{1.rho}
  \pa_t\rho_\sigma = \sigma\Delta\rho_\sigma 
	+ \diver(\rho_\sigma\na(-\Delta)^{-s}f_\sigma(\rho_\sigma)), \quad
	\rho_\sigma(0)=\rho_\sigma^0\quad\mbox{in }\R^d.
\end{equation}

We perform the limits $N\to\infty$ and $(\beta,\zeta)\to 0$ simultaneously.
The logarithmic scaling $\beta\sim(\log N)^{-\mu}$ for some $\mu>0$
corresponds to the moderately interacting particle regime, according to the 
notation of Oelschl\"ager \cite{Oel89}, while
the smoothing parameter $\zeta$ is allowed to depend algebraically on $N$, i.e.\
$\zeta\sim N^{-\nu}$ for some $\nu>0$; see Theorem \ref{thm.error} for details.
Our approach also implies the two-step limit but leading to weak convergence only,
compared to the convergence in expectation obtained in Theorem \ref{thm.error}.

Third, the limit $\sigma\to 0$ is performed on the level of the diffusion 
equation, based on a priori estimates uniform in $\sigma$ and the div-curl lemma.


\subsection{State of the art}

We already mentioned that the existence of weak solutions to \eqref{1.eq} with
$f(u)=u$ was proved first in \cite{CaVa11}. The convergence of the weak solution to
a self-similar profile was shown by the same authors in \cite{CaVa11a}. 
The convergence becomes exponential, at least in one space dimension, 
when adding a confinement potential \cite{CHSV15}. Equation \eqref{1.eq} with $f(u)=u$
was identified as the Wasserstein gradient flow of a square fractional Sobolev
norm \cite{LMS18}, implying time decay as well as energy and entropy estimates.
The H\"older regularity of solutions to \eqref{1.eq} 
was proved in \cite{CSV13} for $f(u)=u$ and in \cite{ITV20} for $f(u)=u^{m-1}$ 
and $m\ge 2$. 

In the literature, related equations have been analyzed too.
Equation \eqref{1.eq} for $f(u)=u$ and the limit case $s=1$
was shown in \cite{AmSe08} to be the Wasserstein gradient flow on the
space of probability measures,
leading to the well-posedness of the equation and energy-dissipation inequalities.
The existence of local smooth solutions to the regularized equation \eqref{1.rho} 
are proved in \cite{ChJe21a}. 
The solutions $\pa_t\rho=\diver(\rho^{m-1}\na P)$ with $P=(-\Delta)^{-s}\rho$
in $\R^d$ propagate with finite speed if and only if $m\ge 2$ \cite{STV14}.
The existence of weak solutions to this equation with 
$P=(-\Delta)^{-s}(\rho^{n})$ and $n>0$
is proved in \cite{NgVa18} (in bounded domains). 
While \eqref{1.eq} has a parabolic-elliptic structure,
parabolic-parabolic systems have been also investigated. For instance, the
global existence of weak solutions to $\pa_t\rho = \diver(\rho\na P)$ and 
$\pa_t P + (-\Delta)^s P = \rho^\beta$, where $\beta>1$, was shown in
\cite{CGZ20}. In \cite{DGZ20}, the algebraic decay towards the steady state 
was proved in the case $\beta=2$. 
We also mention that fractional porous-medium equations of the type
$\pa_t\rho+(-\Delta)^{s/2}f(\rho)=0$ in $\R^d$ have been studied in the literature;
see, e.g., \cite{PQRV18}. Compared to \eqref{1.eq}, this problem has infinite
speed of propagation. For a review and comparison of 
this model and \eqref{1.eq}, we refer to \cite{Vaz12}.

There is a huge literature concerning mean-field limits leading to diffusion
equations, and the research started already in the 1980s; we refer to the
reviews \cite{Gol03,JaWa17} and the classical works of Sznitman \cite{Szn84,Szn91}. 
Oelschl\"ager proved the mean-field limit in weakly interacting particle systems
\cite{Oel84}, leading to deterministic nonlinear processes, and moderately interacting
particle systems \cite{Oel90}, giving porous-medium-type equations with quadratic
diffusion. First investigations of moderate interactions in stochastic particle 
systems with nonlinear diffusion coefficients were performed in \cite{JoMe98}. 
The approach of moderate interactions was extended in \cite{CDHJ20,CDJ19} 
to multi-species systems, deriving population cross-diffusion systems.
Reaction-diffusion equations with nonlocal terms were derived in the mean-field
limit in \cite{IRS12}. The large population limit of point measure-valued Markov 
processes leads to nonlocal Lotka--Volterra systems with cross diffusion
\cite{FoMe15}. Further references can be found in \cite[Sec.~1.3]{ORT21}.

Compared to previous works, we consider a singular kernel $\K$ and derive
a partial differential equation without Laplace diffusion by taking
the limit $\sigma\to 0$. The authors of \cite{FiPh08} derived the viscous
porous-medium equation by starting from a stochastic particle system with a
double convolution structure in the drift term, similar to \eqref{1.barX}.
The main difference to our work is that (besides
different techniques for the existence and regularity of solutions to the
parabolic problems) we consider a singular kernel in one part
of the convolution and a different scaling for the approximating regularized 
kernel $\K_\zeta=\K\omega_\zeta*W_\zeta$, where $\omega_\zeta$ is a 
$W^{1,\infty}(\R^d)$ cutoff function (see Section 1.3 for the exact definition),
in comparison to the interaction scaling
$W_\beta*\rho_{\sigma,\beta,\zeta}$. The two different scalings $\beta$ and $\zeta$
allow us to establish a result, for which the kernel regularization on the particle
level does not need to be of logarithmic type but of power-law type only.


\subsection{Main results and key ideas}\label{sec.main}

We impose the following hypotheses:
\begin{enumerate}
\item[(H1)] Data: Let $0<s<1$, $d\ge 2$.
\item[(H2)] $\rho^0\in L^\infty(\R^d)\cap L^1(\R^d)$ satisfies $\rho^0 \geq 0$
in $\R^d$ and $\int_{\R^d}\rho^0(x)|x|^{2d/(d-2s)}dx<\infty$.
\item[(H3)] Nonlinearity: $f\in C^{1}([0,\infty))$ is nondecreasing, $f(0)=0$,
and $u\mapsto uf(u)$ for $u>0$ is strictly convex.
\end{enumerate}

Let us discuss these assumptions. 
We assume that $d\ge 2$; the case $d=1$ can be treated if $s<1/2$; see
\cite{CaVa11}. Extending the range of $s$ to $s<0$ leads to the fractional
(higher-order) thin-film equation, which is studied in \cite{Lis20}. 
The case $1<s<d/2$ may be considered too, since it yields better regularity
results; we leave the details to the reader. On the other hand, the case $s\ge d/2$
is more delicate since the multiplier in the definition of $(-\Delta)^{-s}$
using Fourier transforms does not define a tempered distribution. The case
$s=d/2$ for $d\le 2$ (with a logarithmic Riesz kernel) was analyzed in \cite{Due16}. 
We need the moment bound for the initial datum $\rho^0$ to 
prove the same moment bound for $\rho_\sigma$, which in turn is used
several times, for instance to show the entropy balance and the convergence
$\rho_\sigma\to\rho$ as $\sigma\to 0$ 
in the sense of $C^0_{\rm weak}([0,T];L^1(\R^d))$.
The monotonicity of $f$ and the strict convexity of $u\mapsto uf(u)$ are needed to 
prove the strong convergence of $(\rho_\sigma)$, which then allows us to 
identify the limit of $(f_\sigma(\rho_\sigma))$.
An example of a function satisfying Hypothesis (H3) is
$f(u)=u^\beta$ with $\beta\ge 1$.

Our first main result is concerned with the existence analysis of \eqref{1.eq}.
We write $\|\cdot\|_p$ for the $L^p(\R^d)$ norm.

\begin{theorem}[Existence of weak solutions to \eqref{1.eq}]\label{thm.ex}
Let Hypotheses (H1)--(H3) hold. Then there exists a weak solution $\rho\ge 0$ to
\eqref{1.eq} satisfying {\em (i)} the regularity
\begin{align*}
  & \rho\in L^\infty(0,\infty;L^1(\R^d)\cap L^\infty(\R^d)), \quad
	\na(-\Delta)^{-s/2}f(\rho)\in L^2(0,\infty;L^2(\R^d)), \\
	& \pa_t\rho\in L^2(0,\infty;H^{-1}(\R^d)),
\end{align*}
{\em (ii)} the weak formulation
\begin{equation}\label{1.weak}
  \int_0^T\langle\pa_t\rho,\phi\rangle dt 
	+ \int_0^T\int_{\R^d}\rho\na(-\Delta)^{-s}f(\rho)\cdot\na\phi dxdt = 0
\end{equation}
for all $\phi\in L^2(0,T;H^1(\R^d))$ and $T>0$, {\em (iii)} the initial datum
$\rho(0)=\rho^0$ in the sense of $H^{-1}(\R^d)$, and {\em (iv)} the following
properties for $t>0$:
\begin{itemize}
\item Mass conservation: $\|\rho(t)\|_1 = \|\rho^0\|_1$, 
\item Dissipation of the $L^\infty$ norm: 
$\|\rho(t)\|_{\infty} \le \|\rho^0\|_\infty$, 
\item Moment estimate: $\sup_{0<t<T}\int_{\R^d}\rho(x,t)|x|^{2d/(d-2s)}dx \le C(T)$,
\item Entropy inequality:
$$
  \int_{\R^d}h(\rho(t))dx + \int_0^t\int_{\R^d}|\na(-\Delta)^{-s/2}f(\rho)|^2dxds
	\le \int_{\R^d}h(\rho^0)dx. 
$$
\end{itemize}
\end{theorem}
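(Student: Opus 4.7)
The plan is to produce $\rho$ as the vanishing-viscosity limit $\sigma\to 0$ of the solutions $\rho_\sigma$ to the regularized problem \eqref{1.rho}, whose existence for each fixed $\sigma>0$ is supplied by the (macroscopic) mean-field construction described around \eqref{1.hatX}--\eqref{1.rho}. The argument splits into $\sigma$-uniform a priori estimates, a compactness step, and identification of the nonlinear limit via compensated compactness.

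First I would establish the four uniform estimates. Mass conservation follows by integrating \eqref{1.rho}; a parabolic maximum principle, exploiting that at a maximum of $\rho_\sigma$ the drift term has the correct sign by monotonicity of $f_\sigma$, yields $\|\rho_\sigma(t)\|_\infty\le\|\rho_\sigma^0\|_\infty$. For the entropy, choose a convex density $h_\sigma$ with $h_\sigma''(u)u=f_\sigma'(u)$ and test \eqref{1.rho} against $h_\sigma'(\rho_\sigma)$: the identity $\na h_\sigma'(\rho_\sigma)\,\rho_\sigma=\na f_\sigma(\rho_\sigma)$ converts the drift contribution into $\int|\na(-\Delta)^{-s/2}f_\sigma(\rho_\sigma)|^2$, while the Laplacian produces a nonnegative Fisher-type term. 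The weighted moment $\int\rho_\sigma|x|^{2d/(d-2s)}dx$ propagates by testing against a smooth cut-off of this weight and absorbing the nonlocal contribution through the Hardy--Littlewood--Sobolev inequality; the exponent $2d/(d-2s)$ is exactly the Sobolev dual of $(-\Delta)^{-s/2}$ acting on $L^2$, which is why the entropy dissipation closes the estimate.

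Next, I would collect compactness. Writing $\na(-\Delta)^{-s}=(-\Delta)^{-s/2}\na(-\Delta)^{-s/2}$ and applying Hardy--Littlewood--Sobolev puts $\rho_\sigma\na(-\Delta)^{-s}f_\sigma(\rho_\sigma)$ in $L^2(0,T;L^{2d/(d-2s)}(\R^d))$, so from \eqref{1.rho} one reads $\pa_t\rho_\sigma\in L^2(0,T;H^{-1}(\R^d))$ uniformly in $\sigma$ (the $\sigma\Delta\rho_\sigma$ part being controlled by the Fisher dissipation). Combined with the bound on $f_\sigma(\rho_\sigma)$ in an appropriate fractional Sobolev space coming from the entropy, the new fractional Gagliardo--Nirenberg inequalities mentioned in the abstract and a localized Aubin--Lions argument yield a subsequence with $\rho_\sigma\to\rho$ in $L^1_{\mathrm{loc}}$ strongly and $f_\sigma(\rho_\sigma)\rightharpoonup\chi$ weakly; the moment bound then upgrades strong convergence to $\rho_\sigma\to\rho$ in $C^0_{\mathrm{weak}}([0,T];L^1(\R^d))$, which in particular takes care of the initial condition in $H^{-1}$.

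The crux is to identify $\chi=f(\rho)$. Here I would invoke the div-curl lemma on the pair formed by $f_\sigma(\rho_\sigma)$ (as a scalar whose associated "flux" has distributional divergence controlled via \eqref{1.rho} plus the energy bound) against the curl-free vector field $\na(-\Delta)^{-s}f_\sigma(\rho_\sigma)$, passing the product $f_\sigma(\rho_\sigma)\,(-\Delta)^{-s}f_\sigma(\rho_\sigma)$ to its weak limit. Recombining with the entropy identity to trade this quantity for $\int_0^T\!\!\int_{\R^d}\rho_\sigma f_\sigma(\rho_\sigma)\,dxdt$ and passing to the limit gives $\int_0^T\!\!\int_{\R^d}\rho_\sigma f_\sigma(\rho_\sigma)\,dxdt\to\int_0^T\!\!\int_{\R^d}\rho\,\chi\,dxdt$; the strict convexity of $u\mapsto uf(u)$ in Hypothesis (H3) then forces $\chi=f(\rho)$ a.e., after which passing to the limit in \eqref{1.weak} is routine and lower semicontinuity transfers mass conservation, the $L^\infty$ bound, the moment bound, and the entropy inequality to $\rho$. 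The hardest part is exactly this identification: the flux $\rho_\sigma\na(-\Delta)^{-s}f_\sigma(\rho_\sigma)$ couples two factors built from the same weakly convergent sequence through the nonlocal operator $(-\Delta)^{-s}$, so neither factor carries enough regularity on its own, and the smoothing $f_\sigma\to f$, the vanishing viscosity $\sigma\to 0$, the div-curl lemma and the strict convexity of $uf(u)$ have to be orchestrated simultaneously.
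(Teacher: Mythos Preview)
Your overall strategy --- regularize via \eqref{1.rho}, derive $\sigma$-uniform bounds, pass to the limit with the div-curl lemma and convexity --- matches the paper's. However, the compactness/identification step has a genuine gap, and the div-curl pairing you describe is not the one that works.

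First, the claim that ``a localized Aubin--Lions argument yields $\rho_\sigma\to\rho$ in $L^1_{\mathrm{loc}}$ strongly'' is unjustified: uniformly in $\sigma$ you only have $\rho_\sigma\in L^\infty_{t,x}\cap L^\infty_t L^1_x$ and $\pa_t\rho_\sigma\in L^2_t H^{-1}_x$, but \emph{no} spatial regularity for $\rho_\sigma$ itself (the $H^1$ bound carries a factor $\sqrt{\sigma}$, and the entropy only controls $f_\sigma(\rho_\sigma)$ in $H^{1-s}$, not $\rho_\sigma$). If you already had strong convergence of $\rho_\sigma$, the div-curl lemma would be superfluous; in the paper the div-curl step is precisely what \emph{produces} the strong convergence.

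Second, your div-curl pairing is vague and does not yield the needed product. The paper's choice is to work in $(t,x)$ and take
\[
  U_\sigma=\big(\rho_\sigma,\,-\rho_\sigma\na(-\Delta)^{-s}f_\sigma(\rho_\sigma)\big),\qquad
  V_\sigma=\big(f_\sigma(\rho_\sigma),0,\ldots,0\big).
\]
Then $\diver_{(t,x)}U_\sigma=\sigma\Delta\rho_\sigma\to 0$ strongly in $L^2_tH^{-1}_x$ directly from the PDE, while $\operatorname{curl}_{(t,x)}V_\sigma$ involves only $\na f_\sigma(\rho_\sigma)$, which is precompact in $W^{-1,r}$ thanks to the entropy bound $f_\sigma(\rho_\sigma)\in L^2_tH^{1-s}_x$. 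The resulting product identity is $\overline{\rho_\sigma f_\sigma(\rho_\sigma)}=\rho\,\overline{f_\sigma(\rho_\sigma)}$, not the nonlocal quantity $f_\sigma(\rho_\sigma)(-\Delta)^{-s}f_\sigma(\rho_\sigma)$ you wrote.

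Third, from $\overline{\rho_\sigma f(\rho_\sigma)}=\rho\,\overline{f(\rho_\sigma)}$ one needs \emph{two} separate convexity/monotonicity facts: monotonicity of $f$ gives $\overline{f(\rho_\sigma)}=f(\rho)$ (a Minty-type argument), and \emph{then} strict convexity of $u\mapsto uf(u)$ upgrades this to a.e.\ convergence of $\rho_\sigma$ itself, which is what is actually required to pass to the limit in the product $\rho_\sigma\na(-\Delta)^{-s}f_\sigma(\rho_\sigma)$. Your sketch collapses these into one step and attributes the identification to strict convexity alone.

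Finally, a minor remark: the $L^\infty$ bound in the paper is obtained not by a maximum principle but via the energy-type identity (Lemma~\ref{lem.energy}) tested against convex functions vanishing on $[0,\|\rho^0_\sigma\|_\infty]$; your maximum-principle heuristic can be made to work but requires more care with the nonlocal term.
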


Note that the Hardy--Littlewood--Sobolev-type inequality \eqref{HLS2} 
(see Appendix \ref{sec.frac}) implies that
$$
  \|\rho\na(-\Delta)^{-s}f(\rho)\|_2
	= \|\rho(-\Delta)^{-s/2}[\na(-\Delta)^{-s/2}f(\rho)]\|_2
	\le C\|\rho\|_{d/(2s)}\|\na(-\Delta)^{-s/2}f(\rho)\|_2,
$$
such that $\rho\na(-\Delta)^{-s}f(\rho)\in L^2(\R^d)$, and the weak formulation 
\eqref{1.weak} is defined.

The key ideas of the proof of Theorem \ref{thm.ex} are as follows.
A priori estimates for strong solutions $\rho_\sigma$ to the regularized equation
\eqref{1.rho} are derived from mass conservation, the entropy
inequality, and energy-type bounds. The
energy-type bound allows us to show, for sufficiently small $\sigma>0$, that
the $L^\infty$ norm of $\rho_\sigma$ is bounded by the $L^\infty$ norm of $\rho^0$, 
up to some factor depending on the moment bound for $\rho^0$.
The existence of a strong solution $\rho_\sigma$ is proved by regularizing
\eqref{1.rho} in a careful way to deal with the singular kernel. 
The regularized equation
is solved locally in time by Banach's fixed-point theorem. Entropy estimates
allow us to extend this solution globally in time and to pass to the
de-regularization limit.
The second step is the limit $\sigma\to 0$ in \eqref{1.rho}. Since the bounds 
only provide weak convergence of (a subsequence of) $\rho_\sigma$, the 
main difficulty is the identification of the nonlinear limit $f_\sigma(\rho_\sigma)$.
This is done by applying the div-curl lemma and 
exploiting the monotonicity of $f$ and the strict convexity of 
$u\mapsto uf(u)$ \cite{FeNo09}. 

We already mentioned that the existence of local smooth solutions $\rho_\sigma$
to \eqref{1.rho} has been proven in \cite{ChJe21}. However, we provide an
independent proof that allows for global strong solutions and that yields
a priori estimates needed in the mean-field limit.

Our second main result is concerned with the mean-field-type limit. For this,
we need some definitions. Define 
\begin{equation}\label{1.fsigma}
  f_\sigma(u) = \int_0^u (\Gamma_\sigma*(f'\mathrm{1}_{[0,\infty)}))(w)
	\widetilde\Xi(\sigma w) dw\quad u\in\R,
\end{equation} 
where the mollifier $\Gamma_\sigma$ for $\sigma>0$ is given by 
$\Gamma_\sigma(x)=\sigma^{-1}\Gamma_1(x/\sigma)$, and $\Gamma_1\in C_0^\infty(\R)$
satisfies $\Gamma_1\ge 0$, $\|\Gamma_1\|_1=1$, while the cutoff function 
$\widetilde\Xi\in C_0^\infty(\R)$ satisfies 
$0\le\widetilde\Xi\le 1$ in $\R$ and $\widetilde\Xi(x)=1$ for $|x|\le 1$.
Then $f_\sigma\in C^\infty(\R)$, $f'_\sigma\ge 0$, $f_\sigma(0)=0$, and 
the derivatives $\DD^k f_\sigma$ are bounded and compactly supported for all $k\geq 1$. 
In a similar way, we introduce the mollifier function $W_\beta$ for $\beta>0$ 
and $x\in\R^d$ by
\begin{equation}\label{1.Wbeta}
  W_\beta(x) = \beta^{-d}W_1(x/\beta),\quad
	W_1\in C_0^\infty(\R^d)\mbox{ is symmetric},\ W_1\ge 0,\ \|W_1\|_1 = 1.
\end{equation}
Let us define the cutoff version of the singular kernel $\K$ by 
\begin{align}
  & \tildeK := \K \omega_\zeta, \text{ where the cut-off function }
	\omega_\zeta \in W^{1,\infty}(\R^d) \text{ is such that} \nonumber \\
  & 0 \leq \omega_\zeta(x) \leq 1\mbox{ for }x\in\R^d, \quad
	\|\na\omega_\zeta\|_\infty\leq 2\zeta, \label{1.TildeK} \\
  & \omega_\zeta(x) = 1\mbox{ for all } |x|\le \zeta^{-1},\quad
  \omega_\zeta(x) = 0\mbox{ for all } |x|\geq 2\zeta^{-1}. \nonumber
\end{align}
Then the regularized kernel $\K_\zeta$ is given by 
$$
  \K_\zeta(x):=\tildeK*W_\zeta(x)\quad\text{for all } x\in \R^d,
$$
where $\zeta>0$. Let the cutoff function $\Xi\in C_0^\infty(\R^d)$ satisfy 
$0\le\Xi\le 1$ in $\R^d$ and $\Xi(x)=1$ for $|x|\le 1$. Then we define the regularized
initial datum for $x\in\R^d$ by
\begin{equation}\label{1.kappa}
  \rho^0_\sigma(x) = \kappa_\sigma(W_\sigma*\rho^0)(x)\Xi(\sigma x), \quad\mbox{where }
	\kappa_\sigma = \frac{\int_{\R^d}\rho^0(y)dy}{\int_{\R^d}(W_\sigma*\rho^0)(y)
	\Xi(\sigma y)dy}.
\end{equation}
This definition guarantees the mass conservation since 
$\|\rho^0_\sigma\|_1=\|\rho^0\|_1$; see Section \ref{sec.basic}.

\begin{theorem}[Error estimate for the stochastic system]\label{thm.error}
Let $X_i^N$ and $\widehat{X}_i^N$ be the solutions to \eqref{1.X} and
\eqref{1.hatX}, respectively.  We assume that 
$\zeta^{-2s-1} \le C_1N^{1/4}$ for some constant $C_1>0$. 
Let $\delta\in(0,1/4)$ and $\afrak:= \min\{1,d-2s\}>0$.
Then there exist constants $\eps>0$, depending on $\sigma$ and $\delta$, and
$C_2>0$, depending on $\sigma$ and $T$,
such that if $\beta^{-3d-7}\le \eps\log N$ then
$$
  \E\bigg(\sup_{0<s<T}\max_{i=1,\ldots,N}|(X_i^N-\widehat{X}_i^N)(s)|\bigg) 
	\le C_2(\beta+\zeta^{\afrak} )\to 0 \text{ as }
	(N,\zeta, \beta) \to (\infty,0,0).
$$
\end{theorem}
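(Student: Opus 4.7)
The plan is to couple $X_i^N$ to $\widehat X_i^N$ through the intermediate process $\bar X_i^N$ of \eqref{1.barX}, using the \emph{same} Brownian motion and the \emph{same} initial datum $\xi_i$ in all three systems, and then apply the triangle inequality
$$
\E\sup_{0<s<T}\max_i|X_i^N-\widehat X_i^N|(s) \le \E\sup_{0<s<T}\max_i|X_i^N-\bar X_i^N|(s) + \sup_{0<s<T}\max_i|\bar X_i^N-\widehat X_i^N|(s).
$$
I will bound the first term by $C\beta$ via a moderate-interaction Oelschl\"ager-type argument, and the second term by $C(\beta+\zeta^{\afrak})$ via a deterministic PDE stability argument. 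Because of the synchronous coupling, both SDE comparisons collapse to estimates on drift differences followed by Gr\"onwall.

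\textbf{Comparing the intermediate and macroscopic systems.} Subtracting the drifts of \eqref{1.barX} and \eqref{1.hatX} and using $\xi_i$ and $B_i^N$ in common, the difference $\bar X_i^N(t)-\widehat X_i^N(t)$ is a deterministic time integral of three pieces: a Lipschitz displacement term $\na\K*f_\sigma(\rho_\sigma)(\bar X_i^N)-\na\K*f_\sigma(\rho_\sigma)(\widehat X_i^N)$ (absorbed into Gr\"onwall), a kernel-regularization defect $(\na\K_\zeta-\na\K)*f_\sigma(\rho_\sigma)$, and a density-regularization defect $\na\K_\zeta*[f_\sigma(W_\beta*\rho_{\sigma,\beta,\zeta})-f_\sigma(\rho_\sigma)]$. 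Writing $\K-\K_\zeta=(1-\omega_\zeta)\K+\tildeK-\tildeK*W_\zeta$, the first summand is small because $\K(x)$ is small for $|x|\ge\zeta^{-1}$, while the mollification error of $\tildeK$ is controlled by the local modulus of continuity of $\na\K$, which is Lipschitz when $d-2s\ge 1$ and H\"older of exponent $d-2s$ otherwise, explaining the exponent $\afrak=\min\{1,d-2s\}$. The density defect is handled by a stability estimate $\|\rho_{\sigma,\beta,\zeta}-\rho_\sigma\|\le C(\beta+\zeta^{\afrak})$ in an appropriate norm, obtained by subtracting \eqref{1.rho3} from \eqref{1.rho} and using the uniform $L^1\cap L^\infty$ and moment bounds available from the proof of Theorem \ref{thm.ex}.

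\textbf{Comparing the microscopic and intermediate systems.} Subtracting the drifts of \eqref{1.X} and \eqref{1.barX} yields
$$
\na\K_\zeta*f_\sigma\Big(\tfrac1N\!\sum_{j\ne i} W_\beta(X_j^N-\cdot)\Big)(X_i^N)-\na\K_\zeta*f_\sigma\big(W_\beta*\rho_{\sigma,\beta,\zeta}\big)(\bar X_i^N),
$$
which I unfold in three stages: (i) shift the evaluation point from $X_i^N$ to $\bar X_i^N$ via $\|\na^2\K_\zeta\|_\infty\lesssim\zeta^{-2s-1}$, picking up $\max_j|X_j^N-\bar X_j^N|$; (ii) replace $X_j^N$ by $\bar X_j^N$ inside the empirical average using $\|\na W_\beta\|_\infty\lesssim\beta^{-d-1}$ together with Lipschitz $f_\sigma$ and $\na\K_\zeta$; (iii) control the genuine law-of-large-numbers term
$$
\Theta_N := \E\max_i\sup_{\,s\le T,\,x\in\R^d}\Big|\tfrac1N\!\sum_{j\ne i}\big(W_\beta(\bar X_j^N(s)-x)-\E W_\beta(\bar X_j^N(s)-x)\big)\Big|,
$$
where the $\bar X_j^N$ are i.i.d. A Bernstein inequality combined with a union bound over a $\beta$-discretization of $(s,x)$, restricted to a large ball via the moment estimate for $\rho_{\sigma,\beta,\zeta}$, produces $\Theta_N\lesssim\beta^{-p(d)}\sqrt{\log N/N}$ for an explicit polynomial $p(d)$. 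Gr\"onwall applied to $\phi(t):=\E\sup_{s\le t}\max_i|X_i^N-\bar X_i^N|(s)$ then gives $\phi(T)\le C\beta$, provided the accumulated Gr\"onwall constant $\exp(CT\beta^{-(3d+7)/2})$ is dominated by $N^{O(1)}$, which is exactly the hypothesis $\beta^{-3d-7}\le\eps\log N$. The hypothesis $\zeta^{-2s-1}\le C_1N^{1/4}$ similarly ensures that the Stage (i) contribution, after integration and Gr\"onwall amplification, is absorbed by the main $\beta$ term.

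\textbf{Main obstacle.} The heart of the argument is the tight coupling between the concentration bound on $\Theta_N$ and the Gr\"onwall amplification. The Lipschitz constants of $\na^2\K_\zeta$, $\na W_\beta$, $f_\sigma'$ enter multiplicatively into the exponent of Gr\"onwall, and each loss must be matched by the smallness of $\beta\sim(\log N)^{-\mu}$ dictated by the Bernstein rate; the precise power $3d+7$ in the hypothesis comes out of this bookkeeping. A secondary difficulty is the spatial union bound in $\Theta_N$, which requires localizing to a large ball using the uniform $|x|^{2d/(d-2s)}$-moment bound on $\rho_{\sigma,\beta,\zeta}$ inherited from the PDE analysis, so that the discretization contributes only a logarithmic factor.
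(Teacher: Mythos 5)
The overall architecture — synchronous coupling of \eqref{1.X}, \eqref{1.barX}, \eqref{1.hatX}, triangle inequality, Gronwall for each piece, PDE stability ($\|\rho_{\sigma,\beta,\zeta}-\rho_\sigma\|\le C(\beta+\zeta^\afrak)$) for the second piece — matches the paper. The decomposition of the drift difference for $\bar X^N-\widehat X^N$ into a Lipschitz part, a kernel defect $(\K_\zeta-\K)$, and a density defect also matches Lemma~\ref{lem.diff2}. However, your treatment of the first piece $\E\sup_s\max_i|X_i^N-\bar X_i^N|$ contains a gap that would make the argument fail.

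\textbf{The problem with Stage (i).} You claim that shifting the evaluation point from $X_i^N$ to $\bar X_i^N$ costs a Lipschitz factor $\|\na^2\K_\zeta\|_\infty\lesssim\zeta^{-2s-1}$. This bound is quantitatively wrong: since $\K_\zeta=\tildeK*W_\zeta$ with $\tildeK=\K\omega_\zeta$ and $\K(x)\sim|x|^{2s-d}$, the mollified second derivative satisfies $\|\DD^2\K_\zeta\|_\infty\sim\zeta^{2s-d-2}$, which is much larger than $\zeta^{-2s-1}$ for all admissible $(d,s)$. But the deeper issue is structural: whatever the exponent, this Lipschitz constant multiplies $\max_j|X_j^N-\bar X_j^N|$ and hence enters the Gronwall \emph{rate}. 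A $\zeta$-dependent Gronwall rate is fatal in this scheme, because $\zeta$ is allowed to scale \emph{algebraically} in $N$ ($\zeta\sim N^{-\nu}$), so the Gronwall amplification $\exp(CT\zeta^{-c})\sim\exp(CTN^{c\nu})$ overwhelms the $N^{-1/2}$ fluctuation gain. The paper avoids this entirely: the key estimate \eqref{5.DkK}, $\|\DD^2\K_\zeta*\Psi\|_\infty\le C\beta^{-3d-7}$, never touches derivatives of $\K_\zeta$ alone. Instead it uses that $\Psi=f_\sigma(N^{-1}\sum_j W_\beta(\cdot-X_j))$ is smooth at scale $\beta$ and transfers \emph{all} derivatives to $\Psi$ through the Hardy--Littlewood--Sobolev inequality ($\|\K*\DD^k\Phi\|_p\le C\|\DD^k\Phi\|_r$), so that the resulting Gronwall rate depends only on $\beta$, and the logarithmic scaling $\beta^{-3d-7}\le\eps\log N$ is then exactly what keeps $\exp(C\beta^{-3d-7}T)\le N^{C\eps T}$ subdominant. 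The $\zeta^{-2s-1}$ in the hypothesis is \emph{not} a displacement Lipschitz constant; it enters (only) as the $L^1$ norm $\|\tildeK*\na W_\zeta\|_1\lesssim\zeta^{-2s-1}$ multiplying the law-of-large-numbers fluctuation $I_2$, where it is harmless because it sits outside the Gronwall exponential. Your Stage (ii) has the same structural issue, since bounding $\na\K_\zeta*(\cdots)$ via $\|\na\K_\zeta\|_1\sim\zeta^{-2s-1}$ again pushes a $\zeta$-power into the Gronwall rate; the paper's estimate of $I_{12}$ avoids this via the splitting $\K=\K|_{B_1}+\K|_{B_1^c}$ and yields a $\beta^{-d-2}$ rate.

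\textbf{On the fluctuation term.} Your Bernstein-plus-union-bound estimate for $\Theta_N$ is a reasonable (if heavier) alternative to the paper's approach, which observes that after integrating against the $L^1$ kernel $\na\K_\zeta$ only $\sup_{y}\E|\sum_j b_{ij}(y,s)|$ is needed, with the supremum \emph{outside} the expectation, so a plain variance bound $\E|\sum_j b_{ij}|\le(\E[(\sum_j b_{ij})^2])^{1/2}\lesssim\beta^{-d/2}N^{1/2}$ suffices; no discretization, union bound, or path regularity is required. The paper's route is more elementary, but this difference is not by itself a gap. The gap is the $\zeta$-dependent Gronwall rate, and to close it you need the Hardy--Littlewood--Sobolev transfer of derivatives from $\K_\zeta$ onto $\Psi$.
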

The theorem is proved by estimating the differences
\begin{align*}
  E_1(t) &:= \E\bigg(\sup_{0<s<t}\max_{i=1,\ldots,N}|(X_i^N-\bar{X}_i^N)(s)|\bigg), \\
	E_2(t) &:= \E\bigg(\sup_{0<s<t}\max_{i=1,\ldots,N}|(\bar{X}_i^N
	-\widehat{X}_i^N)(s)|\bigg),
\end{align*}
and applying the triangle inequality. For the first difference,
we estimate expressions like $\|\DD^k\K_\zeta*u\|_\infty$ for appropriate
functions $u$ and $\|\DD^k W_\beta\|_\infty$ for $k\in\N$ 
in terms of negative powers of $\beta$ (here, $\DD^k$ denotes the $k$th-order
partial derivatives). Using properties of Riesz potentials,
in particular Hardy--Littlewood--Sobolev-type
inequalites (see Lemmas \ref{lem.hls} and \ref{lem.hlsh}),
we show that for some $\mu_i>0$ ($i=1,2,3$),
$$
  E_1(t) \le C(\sigma)\beta^{-\mu_1}\int_0^t E_1(s)ds 
	+ C(\sigma)\beta^{-\mu_2}\zeta^{-\mu_3}N^{-1/2}.
$$
By applying the Gronwall lemma and choosing
a logarithmic scaling for $\beta$ and an algebraic scaling for $\zeta$ with
respect to $N$, we infer that $E_1(t)\le C(\sigma)N^{-\mu_4}$ 
for some $\mu_4\in(0,1/4)$. For the second difference $E_2$, we need the estimates
$\|W_\beta*u-u\|_\infty\le C(\sigma)\beta$ (Lemma \ref{lem.wbeta}), and
$\|(\K_\zeta-\K)*\rho_\sigma\|\le C(\sigma)\zeta^{\afrak}$,
$\|\rho_{\sigma,\beta,\zeta}-\rho_\sigma\|_\infty
\le C(\sigma)(\beta+\zeta^{\afrak})$
(Proposition \ref{prop.u}), recalling that $\afrak= \min\{1, d-2s\}$. 
The proof of these estimates is very technical. The idea is to apply
several times fractional Gagliardo--Nirenberg inequalities 
that are proved in Appendix \ref{sec.frac} and Hardy--Littlewood--Sobolev
inequalities that are recalled in Lemmas \ref{lem.hls}--\ref{lem.hlsh}.
Then, after suitable computations,
$$
  E_2(t) \le C(\sigma)(\beta+\zeta^{\afrak}) + C(\sigma)\int_0^t E_2(s)ds,
$$
and we conclude with Gronwall's lemma that $E_2(t)\le C(\sigma)(\beta+\zeta^{\afrak})$.

Theorem \ref{thm.error} and calculations for $\sigma\to 0$
yield the following propagation of chaos result. 

\begin{theorem}[Propagation of chaos for \eqref{1.eq}]\label{thm.chaos}
Let the assumptions of Theorem \ref{thm.error} hold and let 
$\mathrm{P}_{N,\sigma,\beta,\zeta}^k(t)$ be the joint distribution of
$(X_1^N(t),\ldots,X_k^N(t))$ for a fixed $t\in(0,T)$. Then there exists
a subsequence in $\sigma$ such that
$$
  \lim_{\sigma\to 0}\lim_{N\to\infty,\,(\beta,\zeta)\to 0}
	\mathrm{P}_{N,\sigma,\beta,\zeta}^k(t) = \mathrm{P}^{\otimes k}(t),
$$
where the limit is locally uniform in $t$, the limit $N\to\infty$,
$(\beta,\zeta)\to 0$ has to be understood in the sense of Theorem \ref{thm.error},
and the measure $\mathrm{P}(t)$ is absolutely continuous with respect to the
Lebesgue measure with the probability density function $\rho(t)$ that is
a weak solution to \eqref{1.eq}.
\end{theorem}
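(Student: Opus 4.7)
The plan is a two-stage passage to the limit mirroring the structure of the statement. For the inner limit, with $\sigma>0$ fixed, I would show that $\mathrm{P}_{N,\sigma,\beta,\zeta}^k(t) \to \mathrm{P}_\sigma(t)^{\otimes k}$ narrowly as $(N,\beta,\zeta)\to(\infty,0,0)$ in the regime of Theorem \ref{thm.error}, where $\mathrm{P}_\sigma(t)$ denotes the probability measure with density $\rho_\sigma(t)$. For the outer limit, I would pass $\sigma\to 0$ along a suitable subsequence using Theorem \ref{thm.ex} to obtain $\mathrm{P}_\sigma(t)^{\otimes k}\to \mathrm{P}(t)^{\otimes k}$, so that the iterated limit is identified with the $k$-fold tensor product of the weak solution to \eqref{1.eq}.

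For the inner limit, I would exploit that the macroscopic system \eqref{1.hatX} is uncoupled: the drift at any position depends only on the deterministic density $\rho_\sigma$, the initial data $\xi_i$ are i.i.d., and the Brownian motions $B_i^N$ are independent. Hence $\widehat{X}_1^N(t),\ldots,\widehat{X}_N^N(t)$ are mutually independent with common law $\mathrm{P}_\sigma(t)$, and the joint law of the first $k$ of them is exactly $\mathrm{P}_\sigma(t)^{\otimes k}$. For any bounded Lipschitz function $\varphi$ on $(\R^d)^k$, the triangle inequality and Theorem \ref{thm.error} give
$$
  \bigl|\E\varphi(X_1^N(t),\ldots,X_k^N(t))-\E\varphi(\widehat{X}_1^N(t),\ldots,\widehat{X}_k^N(t))\bigr|
  \le\operatorname{Lip}(\varphi)\sum_{i=1}^k\E|X_i^N(t)-\widehat{X}_i^N(t)|,
$$
which is bounded by $k\operatorname{Lip}(\varphi)\,C(\sigma,T)(\beta+\zeta^{\afrak})$ and vanishes uniformly in $t\in[0,T]$ in the prescribed joint limit. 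Since bounded Lipschitz functions are convergence-determining on the space of probability measures on $(\R^d)^k$, this yields the claimed inner narrow convergence, locally uniformly in $t$.

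For the outer limit, Theorem \ref{thm.ex} produces a subsequence along which $\rho_\sigma\to\rho$ in $C^0_{\mathrm{weak}}([0,T];L^1(\R^d))$. Combined with mass conservation $\|\rho_\sigma(t)\|_1=\|\rho^0\|_1$ and the uniform moment bound $\int_{\R^d}\rho_\sigma(x,t)|x|^{2d/(d-2s)}dx\le C(T)$, which rules out any escape of mass to infinity, the weak $L^1$-convergence upgrades to narrow convergence of the probability measures $\mathrm{P}_\sigma(t)\to\mathrm{P}(t)$, locally uniformly in $t$ thanks to the $H^{-1}$-bound on $\pa_t\rho$ that supplies the equicontinuity in $t$. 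Narrow convergence on $\R^d$ is preserved under taking $k$-fold products, so $\mathrm{P}_\sigma(t)^{\otimes k}\to\mathrm{P}(t)^{\otimes k}$ narrowly, and chaining with the inner convergence completes the proof.

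The main delicacy I foresee is not the identification of the limit but the bookkeeping needed to justify the iterated limit in the stated order together with local uniformity in $t$. The inner limit is automatic for each fixed $\sigma$ by the pathwise supremum bound of Theorem \ref{thm.error}; the challenge is extracting a single subsequence $\sigma_n\to 0$ that works simultaneously for all $t\in[0,T]$, which relies crucially on the $C^0_{\mathrm{weak}}([0,T];L^1)$ regularity from Theorem \ref{thm.ex}. A secondary technical point is upgrading weak $L^1$-convergence of densities to narrow convergence of the associated probability measures on $\R^d$, for which the moment estimate is the essential ingredient.
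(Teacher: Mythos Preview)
Your proposal is correct and follows essentially the same route as the paper: the inner limit uses Theorem~\ref{thm.error} to compare $(X_1^N,\ldots,X_k^N)$ with the i.i.d.\ tuple $(\widehat{X}_1^N,\ldots,\widehat{X}_k^N)$ having common density $\rho_\sigma$, and the outer limit uses the $C^0_{\mathrm{weak}}$-in-time convergence $\rho_\sigma\to\rho$ (established in the paper as Corollary~\ref{coro.cont}, via the $H^{-1}$ bound on $\pa_t\rho_\sigma$ and the moment estimate) to pass from $\mathrm{P}_\sigma^{\otimes k}$ to $\mathrm{P}^{\otimes k}$. The paper phrases the outer convergence via CDFs (testing against $\mathrm{1}_{(-\infty,x]^d}$) rather than bounded Lipschitz functions, but this is only a cosmetic difference.
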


If equation \eqref{1.eq} was uniquely solvable, we would obtain the convergence
of the whole sequence in $\sigma$. Unfortunately, the regularity of the
solution $\rho$ to \eqref{1.eq} is too weak to conclude the uniqueness of 
weak solutions. 
Up to our knowledge, none of the known methods, such as \cite{BSV15,CiJa11},
seem to be applicable to equation \eqref{1.eq}.

The paper is organized as follows. The existence of global nonnegative weak
solutions to \eqref{1.eq} is proved in Section \ref{sec.rho} by establishing an
existence analysis for \eqref{1.rho} and performing the limit $\sigma\to 0$.
Some uniform estimates for the solution $\rho_{\rho,\beta,\zeta}$ to \eqref{1.rho3}
and for the difference $\rho_{\sigma,\beta,\zeta}-\rho_\sigma$ are shown
in Section \ref{sec.rho3}. Section \ref{sec.mean} is devoted to the proof
of the error estimate in Theorem \ref{thm.error} and the propagation of
chaos in Theorem \ref{thm.chaos}. In Appendices \ref{sec.aux}--\ref{sec.regul}
we recall some auxiliary results and Hardy--Littlewood--Sobolev-type
inequalities, prove new variants of fractional Gagliardo--Nirenberg inequalities,
and formulate a result on parabolic regularity.

\subsection*{Notation}

We write $\|\cdot\|_p$ for the $L^p(\R^d)$ or $L^p(\R)$ norm with $1\le p\le \infty$.
The ball around the origin with radius $R>0$ is denoted by $B_R$.
The partial derivative $\pa/\pa x_i$ is abbreviated as $\pa_i$ for $i=1,\ldots,d$,
and $\DD^\alpha$ denotes a partial derivative of order $|\alpha|$, where 
$\alpha\in\N_0^d$ is a multiindex. The notation $\DD^k$ refers to the
$k$th-order tensor of partial derivatives of order $k\in\N$. In this situation,
the norm $\|\DD^k u\|_p$ is the sum of all $L^p$ norms of partial derivatives of $u$
of order $k$. Finally, $C>0$, $C_1>0$, etc.\ denote generic constants with values 
changing from line to line.


\section{Analysis of equation \eqref{1.eq}}\label{sec.rho}

In this section, we prove the existence of global nonnegative weak solutions
to \eqref{1.eq} and an estimate for the difference
$\rho_{\sigma,\beta,\zeta}-\rho_\sigma$ of the solutions to \eqref{1.rho3} and 
\eqref{1.rho}, respectively, needed in the mean-field limit.
We first prove the existence of a solution $\rho_\sigma$ to \eqref{1.rho}
by a fixed-point argument and then perform the limit $\sigma\to 0$.
Recall definition \eqref{1.kappa} of the number $\kappa_\sigma$, which
is stated in (iv) below.

\begin{theorem}\label{thm.rho}
Let Hypotheses (H1)--(H3) hold. Then for all $\sigma > 0$, there exists a unique 
weak solution $\rho_\sigma\ge 0$ to \eqref{1.rho} satisfying {\em (i)} the regularity
\begin{align*}
  & \rho_\sigma\in L^\infty(0,\infty;L^1(\R^d)\cap L^\infty(\R^d))\cap 
	C^0([0,\infty);L^2(\R^d)), \\
	& \na\rho_\sigma \in L^2(0,\infty;L^2(\R^d)), \quad
	\pa_t\rho_\sigma\in L^2(0,\infty;H^{-1}(\R^d)),
\end{align*}
{\em (ii)} the weak formulation of \eqref{1.rho} with test functions 
$\phi\in L^2(0,T; H^1(\R^d))$,
{\em (iii)} the inital datum $\rho_\sigma(0) = \rho_\sigma^0$ in $L^2(\R^d)$, 
and {\em (vi)} the following properties for $t>0$, which are uniform in $\sigma$ 
for sufficiently small $\sigma >0$:
\begin{itemize}
 \item Mass conservation: $\|\rho_\sigma(t)\|_1 = \|\rho^0\|_1$.
 \item Dissipation of the $L^\infty$ norm: 
  $\|\rho_\sigma\|_{L^\infty(0,\infty; L^\infty(\R^d))}\le\kappa_\sigma
  \|\rho^0\|_{L^\infty(\R^d)}\leq C\|\rho^0\|_{L^\infty(\R^d)}$. 
 \item Moment estimate: $\sup_{t\in [0,\infty)}\int_{\R^d} 
\rho_\sigma(x,t) |x|^{\frac{2d}{d-2s}} dx \leq C_T$.
 \item Entropy inequality: 
 \begin{align*}
   \int_{\R^d} & h(\rho_\sigma(T))dx 
   + 4\sigma \int_0^T\int_{\R^d} f_\sigma'(\rho_\sigma)|\nabla\sqrt{\rho_\sigma}|^2
	dx dt \\
  &{}+ \int_0^T\int_{\R^d}|\na (-\Delta)^{-s/2}f_\sigma(\rho_\sigma)|^2 dx dt 
	\leq\int_{\R^d} h(\rho_\sigma^0) dx\quad \mbox{for all }T>0.
 \end{align*}
\end{itemize}
Additionally, for any $T>0$, $1<p<\infty$, and $2\le q<\infty$, there exists $C>0$,
depending on $T$, $\sigma$, $p$, and $q$, such that 
$$
  \|\rho_\sigma\|_{L^p(0,T;W^{3,p}(\R^d))}
	+ \|\pa_t\rho_\sigma\|_{L^p(0,T;W^{1,p}(\R^d))}
	+ \|\rho_\sigma\|_{C^0([0,T];W^{2,1}(\R^d) \cap W^{3,q}(\R^d))} \le C,
$$
i.e., $\rho_\sigma$ is even a strong solution to \eqref{1.rho} and
$\rho_\sigma\in C^0([0,T];W^{2,1}(\R^d) \cap W^{3,q}(\R^d))$ for $q\ge 2$.
\end{theorem}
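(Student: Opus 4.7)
The plan is to prove Theorem \ref{thm.rho} in four stages: (i) introduce a further regularization of \eqref{1.rho} that tames the singular kernel $\K$, (ii) construct short-time solutions by Banach's fixed-point theorem, (iii) derive a priori estimates that (for small $\sigma$) are uniform in $\sigma$ and permit both global continuation and passage to the de-regularization limit, and (iv) bootstrap parabolic regularity to promote $\rho_\sigma$ to a strong solution. Uniqueness will follow from a standard $L^2$ energy estimate on the difference of two solutions, closed by the $L^\infty$ bound and the fractional Hardy--Littlewood--Sobolev (HLS) inequality of Appendix \ref{sec.frac}.

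For the short-time construction, I would replace $\K$ by a smooth compactly supported approximation $\K_\eta$ (of the form \eqref{1.TildeK}) and consider the map $\Phi:\tilde\rho\mapsto\rho$, where $\rho$ is the unique solution of the linear parabolic problem
\begin{equation*}
  \pa_t\rho - \sigma\Delta\rho = \diver\bigl(\rho\,\na\K_\eta*f_\sigma(\tilde\rho)\bigr), \qquad \rho(0)=\rho_\sigma^0.
\end{equation*}
Since $f_\sigma$ and all its derivatives are smooth, bounded, and compactly supported and $\na\K_\eta\in L^\infty\cap C^\infty$, the drift $\na\K_\eta*f_\sigma(\tilde\rho)$ lies in $L^\infty(0,T;W^{1,\infty}(\R^d))$, and the parabolic regularity result of Appendix \ref{sec.regul} produces $\rho$ in a space continuously embedded in $C^0([0,T_0];L^2(\R^d))$. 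A Gronwall estimate on the $L^2$ difference of two iterates shows that $\Phi$ is a contraction on a short interval, yielding a local fixed point. After establishing the a priori estimates below, the de-regularization limit $\eta\to 0$ is carried out using standard compactness.

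The global a priori bounds are derived as follows. Mass conservation is immediate upon integrating the equation. The entropy inequality follows by testing with $h'(\rho_\sigma)$, where $h$ is characterized by $h''(u)=f_\sigma'(u)/u$ (truncated to handle the singularity at $0$): an integration by parts converts the drift contribution into $-\|\na(-\Delta)^{-s/2}f_\sigma(\rho_\sigma)\|_2^2$, while the diffusion gives the $4\sigma f_\sigma'(\rho_\sigma)|\na\sqrt{\rho_\sigma}|^2$ term. The moment estimate is obtained by testing against $|x|^{2d/(d-2s)}$ (with a spatial cutoff), controlling the drift cross term via HLS (Lemma \ref{lem.hls}) combined with mass conservation, and closing by Gronwall. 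The delicate $L^\infty$ bound is obtained by Moser iteration: testing with $\rho_\sigma^{p-1}$ yields
\begin{equation*}
  \tfrac{1}{p}\tfrac{d}{dt}\|\rho_\sigma\|_p^p + \tfrac{4\sigma(p-1)}{p^2}\|\na\rho_\sigma^{p/2}\|_2^2 = -\tfrac{p-1}{p}\int_{\R^d}\rho_\sigma^p\,(-\Delta)^{1-s}f_\sigma(\rho_\sigma)\,dx,
\end{equation*}
and the right-hand side is controlled by fractional Gagliardo--Nirenberg interpolation (Appendix \ref{sec.frac}) together with the moment bound, uniformly in $p$. Iteration in $p$ and passage to $p\to\infty$ give $\|\rho_\sigma\|_\infty\le\kappa_\sigma\|\rho^0\|_\infty$; since \eqref{1.kappa} together with $\rho^0\in L^1\cap L^\infty$ forces $\kappa_\sigma\to 1$ as $\sigma\to 0$, this is uniform in $\sigma$ for small $\sigma$.

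The strong-solution regularity follows from a bootstrap argument. Once $\rho_\sigma\in L^\infty$, HLS places the drift $\na(-\Delta)^{-s}f_\sigma(\rho_\sigma)$ in the appropriate mixed-norm spaces, so iterated application of the parabolic regularity result of Appendix \ref{sec.regul} gives $\rho_\sigma\in L^p(0,T;W^{3,p})$ and $\pa_t\rho_\sigma\in L^p(0,T;W^{1,p})$ for every $p\in(1,\infty)$; the continuity in $C^0([0,T];W^{2,1}\cap W^{3,q})$ then follows by Sobolev embedding and interpolation. The main obstacle throughout is the $\sigma$-uniform $L^\infty$ bound: it requires chaining the moment estimate, fractional HLS, fractional Gagliardo--Nirenberg, and Moser iteration in a manner that respects the scaling $\kappa_\sigma$ built into \eqref{1.kappa}. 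This is precisely the uniformity needed to allow the limit $\sigma\to 0$ in the proof of Theorem \ref{thm.ex}.
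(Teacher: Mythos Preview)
Your overall plan---regularize the kernel, run Banach's fixed point for local existence, derive uniform a priori bounds, globalize, de-regularize, then bootstrap parabolic regularity---matches the paper's strategy. Two specific steps, however, do not go through as you describe.

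First, the $L^\infty$ bound. You write the drift contribution in the $L^p$ test as $-\tfrac{p-1}{p}\int_{\R^d}\rho_\sigma^p\,(-\Delta)^{1-s}f_\sigma(\rho_\sigma)\,dx$ and propose to control it via fractional Gagliardo--Nirenberg interpolation together with the moment bound, uniformly in $p$, and then run a Moser iteration. But this term already has a sign: writing $(-\Delta)^{1-s}$ through its singular-integral representation and symmetrizing in $(x,y)$ turns it into a nonpositive multiple of
\[
\int_{\R^d}\int_{\R^d}\frac{\bigl(\rho_\sigma(x)^p-\rho_\sigma(y)^p\bigr)\bigl(f_\sigma(\rho_\sigma(x))-f_\sigma(\rho_\sigma(y))\bigr)}{|x-y|^{d+2(1-s)}}\,dx\,dy\ \ge\ 0,
\]
since both $u\mapsto u^p$ and $f_\sigma$ are nondecreasing. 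Hence $t\mapsto\|\rho_\sigma(t)\|_p$ is nonincreasing for every $p$, and sending $p\to\infty$ gives the precise dissipation $\|\rho_\sigma(t)\|_\infty\le\|\rho_\sigma^0\|_\infty\le\kappa_\sigma\|\rho^0\|_\infty$ immediately---no iteration, no interpolation. The paper packages this as an energy estimate valid for any convex $F$ (Lemma~\ref{lem.energy}) and then chooses $F$ vanishing on $[0,\|\rho_\sigma^0\|_\infty]$. A Moser scheme that merely \emph{bounds} the right-hand side by interpolation would at best produce $\|\rho_\sigma\|_\infty\le C(T,\rho^0)$ with an uncontrolled constant, not the sharp $\kappa_\sigma\|\rho^0\|_\infty$ stated in the theorem; the sign observation is not optional here.

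Second, the $C^0([0,T];W^{2,1}(\R^d))$ bound does not follow from ``Sobolev embedding and interpolation'' starting from $L^p(0,T;W^{3,p})$ or $C^0([0,T];W^{3,q})$ with $q\ge 2$: on the whole space there is no way to descend from $L^q$-based norms with $q\ge 2$ to $L^1$. The paper obtains it via the weights $\gamma_n=(1+|x|^2)^{n/2}$: one shows $\gamma_n\DD^2\rho_\sigma\in L^\infty(0,T;L^p(\R^d))$ for $n>d$ by an inductive parabolic-regularity argument on $v_n=\gamma_n\rho_\sigma$, and then concludes $\|\DD^2\rho_\sigma\|_1\le\|\gamma_n^{-1}\|_{p/(p-1)}\|\gamma_n\DD^2\rho_\sigma\|_p$ (Lemma~\ref{lem.regulrho}). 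You will need an analogous weighted argument.
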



\subsection{Basic estimates for $\rho_\sigma$}\label{sec.basic}

We prove a priori estimates in $L^p$ spaces and an energy-type estimate.
Let $\sigma\in(0,1)$ and 
let $\rho_\sigma$ be a nonnegative strong solution to \eqref{1.rho}.
Integration of \eqref{1.rho} in $\R^d$ and the definition of $\rho_\sigma^0$
immediately yield the mass conservation
\begin{equation}\label{3.cons}
  \|\rho_\sigma(t)\|_1 = \|\rho_\sigma^0\|_1 = \|\rho^0\|_1\quad\mbox{for }t>0.
\end{equation}

\begin{lemma}[Energy-type estimate]\label{lem.energy}
Let $F\in C^2([0,\infty))$ be convex
and let $F(\rho_\sigma^0)\in L^1(\R^d)$. Then
\begin{align}\label{3.dFdt}
  \frac{d}{dt}\int_{\R^d}&F(\rho_\sigma)dx 
	= -\sigma\int_{\R^d}F''(\rho_\sigma)|\na\rho_\sigma|^2 dx \\
	&{}-\frac{c_{d,1-s}}{2}\int_{\R^d}\int_{\R^d}
	\frac{(G(\rho_\sigma(x))-G(\rho_\sigma(y)))
	(f_\sigma(\rho_\sigma(x))-f_\sigma(\rho_\sigma(y)))}{|x-y|^{d+2(1-s)}}dxdy \le 0,
	\nonumber
\end{align}
where $G(u):=\int_0^u vF''(v)dv$ for $u\ge 0$ and $c_{d,1-s}$ is defined after
\eqref{1.def}.
\end{lemma}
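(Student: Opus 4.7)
The strategy is to test \eqref{1.rho} against $F'(\rho_\sigma)$ and to rewrite each resulting term. The regularity from Theorem \ref{thm.rho}, in particular $\rho_\sigma\in C^0([0,T];W^{3,q}(\R^d))$ together with the moment bound and mass conservation, makes $F'(\rho_\sigma)$ an admissible test function so that $\int_{\R^d}F'(\rho_\sigma)\pa_t\rho_\sigma\,dx=\frac{d}{dt}\int_{\R^d}F(\rho_\sigma)\,dx$. The diffusion term is treated by a single integration by parts, the boundary contributions vanishing by the decay of $\rho_\sigma$ and $\na\rho_\sigma$ at infinity, which yields precisely $-\sigma\int_{\R^d}F''(\rho_\sigma)|\na\rho_\sigma|^2\,dx$.

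For the drift term, I would integrate by parts twice. First,
\[
\int_{\R^d}F'(\rho_\sigma)\diver\bigl(\rho_\sigma\na(-\Delta)^{-s}f_\sigma(\rho_\sigma)\bigr)dx = -\int_{\R^d}\rho_\sigma F''(\rho_\sigma)\na\rho_\sigma\cdot\na(-\Delta)^{-s}f_\sigma(\rho_\sigma)\,dx.
\]
Recognizing that $G'(u)=uF''(u)$ by the definition of $G$, so $\na G(\rho_\sigma)=\rho_\sigma F''(\rho_\sigma)\na\rho_\sigma$, the right-hand side equals $-\int_{\R^d}\na G(\rho_\sigma)\cdot\na(-\Delta)^{-s}f_\sigma(\rho_\sigma)\,dx$. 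A second integration by parts, together with the identity $-\Delta\circ(-\Delta)^{-s}=(-\Delta)^{1-s}$ which is valid since $0<1-s<1$, converts this into $-\int_{\R^d}G(\rho_\sigma)(-\Delta)^{1-s}f_\sigma(\rho_\sigma)\,dx$. Invoking the standard symmetric bilinear-form representation
\[
\int_{\R^d}u\,(-\Delta)^{1-s}v\,dx=\frac{c_{d,1-s}}{2}\int_{\R^d}\int_{\R^d}\frac{(u(x)-u(y))(v(x)-v(y))}{|x-y|^{d+2(1-s)}}dxdy
\]
with $u=G(\rho_\sigma)$ and $v=f_\sigma(\rho_\sigma)$ produces the double integral in the statement.

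Non-positivity is then transparent: convexity of $F$ gives $F''\ge 0$ so that the diffusion term is $\leq 0$; moreover $G'(u)=uF''(u)\ge 0$ on $[0,\infty)$, so $G$ is nondecreasing there, and $f_\sigma$ is nondecreasing by construction \eqref{1.fsigma} since $f_\sigma'\ge 0$. Hence the factors $G(\rho_\sigma(x))-G(\rho_\sigma(y))$ and $f_\sigma(\rho_\sigma(x))-f_\sigma(\rho_\sigma(y))$ always carry the same sign, so the integrand of the double integral is pointwise nonnegative and the nonlocal term is $\leq 0$. The main subtlety I anticipate is the rigorous justification of the two integrations by parts and of the bilinear-form identity on the whole space, which requires enough decay and regularity of $(-\Delta)^{-s}f_\sigma(\rho_\sigma)$ and $\na(-\Delta)^{-s}f_\sigma(\rho_\sigma)$; this should follow from $f_\sigma(\rho_\sigma)\in L^1\cap L^\infty$ (using $f_\sigma(0)=0$, the boundedness of $f_\sigma'$, and the bounds on $\rho_\sigma$) together with Hardy--Littlewood--Sobolev (Lemma \ref{lem.hls}). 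Should the direct argument be tight, a truncation by a sequence $F_R$ with $F_R''$ nonnegative and compactly supported, performing the computation at the truncated level and passing to the limit via monotone convergence in the double integral, furnishes a safe alternative.
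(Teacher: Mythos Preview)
Your approach is essentially the paper's: test with the derivative of $F$, integrate by parts to produce $\nabla G(\rho_\sigma)$, convert $-\Delta\circ(-\Delta)^{-s}$ into $(-\Delta)^{1-s}$, and symmetrize to obtain the double integral; the sign argument is identical. The one refinement the paper makes that you should incorporate is that it tests with $F'(\rho_\sigma)-F'(0)$ rather than $F'(\rho_\sigma)$: this is what guarantees admissibility in the weak formulation, since $|F'(\rho_\sigma)-F'(0)|\le \|F''\|_\infty\,\rho_\sigma$ lies in $L^1\cap L^\infty$, whereas $F'(\rho_\sigma)$ itself need not decay at infinity when $F'(0)\neq 0$. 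Correspondingly, the paper first assumes $F''$ bounded, carries out the computation, and then removes this assumption by the truncation $F_k''=\min\{F'',k\}$ with monotone convergence---exactly the ``safe alternative'' you sketch at the end, so your backup plan is in fact the paper's main device.
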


\begin{proof}
First, we assume that $F''$ is additionally bounded. Then $F'(\rho_\sigma)-F'(0)$ is an
admissible test function in the weak formulation of \eqref{1.rho},
since $|F'(\rho_\sigma)-F'(0)|\le \|F''\|_\infty|\rho_\sigma|$. It follows from
definition \eqref{Def.FractionalLaplacian} of the fractional Laplacian
and integration by parts that
\begin{align*}
  \frac{d}{dt}\int_{\R^d}&F(\rho_\sigma)dx 
	 + \sigma\int_{\R^d}F''(\rho_\sigma)|\na\rho_\sigma|^2 dx 
	= -\int_{\R^d}F''(\rho_\sigma)\rho_\sigma\na\rho_\sigma\cdot
	\na(-\Delta)^{-s}f_\sigma(\rho_\sigma)dx \\
	&= -\int_{\R^d}\na G(\rho_\sigma)\cdot\na(-\Delta)^{-s}f_\sigma(\rho_\sigma)dx
	= -\int_{\R^d}G(\rho_\sigma)(-\Delta)^{1-s}f_\sigma(\rho_\sigma)dx \\
	&= -c_{d,1-s}\int_{\R^d}\int_{\R^d}G(\rho_\sigma(x))
	\frac{f_\sigma(\rho_\sigma(x))-f_\sigma(\rho_\sigma(y))}{|x-y|^{d+2(1-s)}}dx dy.
\end{align*}
A symmetrization of the last integral yields \eqref{3.dFdt}.

In the general case, we introduce $F_k(u) = F(0) + F'(0)u + \int_0^u
\int_0^v \min\{F''(w), k\} dw dv$ for $k>0$. Then $F''_k(u)$ is bounded
and \eqref{3.dFdt} follows for $F$ replaced by $F_k$. The result follows after
taking the limit $k\to\infty$ using monotone convergence. 
\end{proof}

We need a bound on $\kappa_\sigma$, defined in \eqref{1.kappa},
to derive uniform $L^\infty(\R^d)$ bounds for $\rho_\sigma$. 

\begin{lemma}[Bound for $\kappa_\sigma$]
There exists $C>0$ such that, for sufficiently small $\sigma>0$,
$$
  1\le\kappa_\sigma\le \frac{1}{1-C\sigma E}, \quad\mbox{where }
  E := \frac{1}{\|\rho^0\|_1}\int_{\R^d}(1+|x|^{2d/(d-2s)})\rho^0(x)dx.
$$
\end{lemma}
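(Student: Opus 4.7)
The strategy is to compare the denominator of $\kappa_\sigma$ with the numerator $\|\rho^0\|_1$ and quantify the defect produced by the cutoff $\Xi(\sigma\cdot)$.

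For the lower bound $\kappa_\sigma \ge 1$, I use that $\rho^0\ge 0$, $W_\sigma\ge 0$, and $0\le\Xi\le 1$. By Fubini and $\|W_\sigma\|_1=1$,
$$
\int_{\R^d}(W_\sigma*\rho^0)(y)\Xi(\sigma y)dy \le \int_{\R^d}(W_\sigma*\rho^0)(y)dy = \|\rho^0\|_1,
$$
so the denominator in \eqref{1.kappa} is at most the numerator, giving $\kappa_\sigma\ge 1$.

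For the upper bound, I would estimate the defect
$$
\|\rho^0\|_1 - \int_{\R^d}(W_\sigma*\rho^0)(y)\Xi(\sigma y)dy = \int_{\R^d}(W_\sigma*\rho^0)(y)\bigl(1-\Xi(\sigma y)\bigr)dy.
$$
Since $\Xi(\sigma y)=1$ for $|\sigma y|\le 1$, the integrand is supported in $\{|y|\ge 1/\sigma\}$. A Markov-type inequality with exponent $p:=2d/(d-2s)$ (which satisfies $p>1$ since $s\in(0,1)$) yields
$$
\int_{|y|\ge 1/\sigma}(W_\sigma*\rho^0)(y)dy \le \sigma^{p}\int_{\R^d}|y|^{p}(W_\sigma*\rho^0)(y)dy.
$$
I then transfer the moment from $W_\sigma*\rho^0$ back to $\rho^0$: using the change of variables $z=y-x$, the compact support of $W_1$ (so $\operatorname{supp} W_\sigma\subset B_{c\sigma}$ for some $c>0$), and the elementary inequality $|x+z|^{p}\le C_p(|x|^{p}+|z|^{p})$, I obtain
$$
\int_{\R^d}|y|^{p}(W_\sigma*\rho^0)(y)dy \le C\Bigl((c\sigma)^{p}\|\rho^0\|_1 + \int_{\R^d}|x|^{p}\rho^0(x)dx\Bigr) \le C E\,\|\rho^0\|_1,
$$
where the last step uses that $\sigma<1$ and the definition of $E$.

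Combining these steps and using $\sigma^{p}\le \sigma$ for $\sigma\in(0,1)$ (since $p\ge 1$), the defect is bounded by $C\sigma E\,\|\rho^0\|_1$, so
$$
\int_{\R^d}(W_\sigma*\rho^0)(y)\Xi(\sigma y)dy \ge (1-C\sigma E)\|\rho^0\|_1.
$$
For $\sigma$ small enough that $C\sigma E<1$, inverting yields $\kappa_\sigma\le 1/(1-C\sigma E)$, completing the proof. No step presents a real obstacle; the only point requiring care is making the constant $C$ in the moment transfer independent of $\sigma$, which is guaranteed by the compact support of $W_1$.
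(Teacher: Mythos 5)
Your proof is correct and follows essentially the same route as the paper's: both obtain $\kappa_\sigma\ge 1$ from $\|W_\sigma\|_1=1$ and $0\le\Xi\le 1$, and both control the upper bound by localizing to $\{|y|\ge 1/\sigma\}$ via the cutoff, applying a Markov-type inequality with exponent $2d/(d-2s)$, splitting $|y|^p\lesssim |y-x|^p+|x|^p$ to pass the moment from $W_\sigma*\rho^0$ to $\rho^0$, and finally using $\sigma^{2d/(d-2s)}\le\sigma$. The only cosmetic difference is that you work directly with the defect integral $\int(W_\sigma*\rho^0)(1-\Xi(\sigma\cdot))$ while the paper splits the full integral into $\{|x|\le 1/\sigma\}$ and its complement, but these are the same computation.
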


\begin{proof}
By Young's convolution inequality (Lemma \ref{lem.young}), we have
$$
  \int_{\R^d}(W_\sigma*\rho^0)(x)\Xi(\sigma x)dx
	\le \|W_\sigma*\rho^0\|_1 \le \|W_\sigma\|_1\|\rho^0\|_1 
	= \|\rho^0\|_1,
$$
which shows that $\kappa_\sigma\ge 1$. To prove the upper bound, we use the
triangle inequality $|x|\le |x-y|+|y|$:
\begin{align*}
  \int_{\R^d}&(W_\sigma*\rho^0)(x)\Xi(\sigma x)dx
	\ge \int_{\{|x|\le 1/\sigma\}}\int_{\R^d}W_\sigma(x-y)\rho^0(y)dydx \\
	&= \int_{\R^d}\bigg(\int_{\R^d}W_\sigma(x-y)dx\bigg)\rho^0(y)dy
	- \int_{\{|x| > 1/\sigma\}}\int_{\R^d}W_\sigma(x-y)\rho^0(y)dydx \\
	&\ge \int_{\R^d}\rho^0(y)dy - \sigma^{2d/(d-2s)}\int_{\{|x| > 1/\sigma\}}\int_{\R^d}
	|x|^{2d/(d-2s)}W_\sigma(x-y)\rho^0(y)dydx \\
	&\ge \int_{\R^d}\rho^0(y)dy - \sigma^{2d/(d-2s)}\int_{\R^d}\int_{\R^d}
	|x-y|^{2d/(d-2s)}W_\sigma(x-y)\rho^0(y)dydx \\
	&\phantom{xx}{}-\sigma^{2d/(d-2s)}\int_{\R^d}\int_{\R^d}
	|y|^{2d/(d-2s)}W_\sigma(x-y)\rho^0(y)dydx.
\end{align*}
Using the property  
$\int_{\R^d}|z|^{2d/(d-2s)} W_\sigma(z)dz\le C\sigma^{2d/(d-2s)}$ for the second
term on the right-hand side and $\|W_\beta\|_{L^1(\R^d)}=1$ for the third term,
we find that
\begin{align*}
  \int_{\R^d}(W_\sigma*\rho^0)(x)\Xi(\sigma x)dx
	&\ge \int_{\R^d}\rho^0(y)dy - C\sigma^{4d/(d-2s)}\int_{\R^d}\rho^0(y)dy \\
	&\phantom{xx}{}-\sigma^{2d/(d-2s)}\int_{\R^d}
	|y|^{2d/(d-2s)}\rho^0(y)dy.
\end{align*}
Because of $\sigma^{2d/(d-2s)}\le\sigma$ for $\sigma\le 1$, we obtain
\begin{align*}
  \frac{\|\rho^0\|_1}{\kappa_\sigma}
	&= \int_{\R^d}(W_\sigma*\rho^0)(x)\Xi(\sigma x)dx
	\ge \int_{\R^d}\rho^0(y)dy - C\sigma\int_{\R^d}(1+|y|^{2d/(d-2s)})\rho^0(y)dy \\
	&\ge \int_{\R^d}\rho^0(y)dy - C\sigma\int_{\R^d}\rho^0(y)dy\cdot E
	= \|\rho^0\|_1(1-C\sigma E),
\end{align*}
which proves the lemma.
\end{proof}

\begin{lemma}[Bounds for $\rho_\sigma$]\label{lem.bound}
The following bounds hold:
\begin{align}
  \|\rho_\sigma(t)\|_\infty &\le \kappa_\sigma\|\rho^0\|_\infty 
	\le C\|\rho^0\|_\infty,	\quad t>0, \label{Linf} \\
	\sqrt{\sigma}\|\rho_\sigma\|_{L^2(0,T;H^1(\R^d))}
	&\le C(T,\|\rho^0\|_1,\|\rho^0\|_2), \label{L2H1}
\end{align}
where \eqref{Linf} holds for sufficiently small $\sigma>0$.
\end{lemma}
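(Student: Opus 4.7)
The plan is to test equation \eqref{1.rho} with powers of $\rho_\sigma$ via the energy identity \eqref{3.dFdt} of Lemma \ref{lem.energy}. The key observation is that for any convex $F\in C^2([0,\infty))$ with $F''\ge 0$, the associated function $G(u)=\int_0^u vF''(v)dv$ is nondecreasing on $[0,\infty)$; combined with the monotonicity of $f_\sigma$ (guaranteed by \eqref{1.fsigma}), the integrand
\[
(G(\rho_\sigma(x))-G(\rho_\sigma(y)))(f_\sigma(\rho_\sigma(x))-f_\sigma(\rho_\sigma(y)))
\]
is nonnegative pointwise, since $\rho_\sigma\ge 0$. Hence both terms on the right-hand side of \eqref{3.dFdt} have sign $\le 0$, so that $t\mapsto \int_{\R^d}F(\rho_\sigma)dx$ is nonincreasing.

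For the $L^\infty$ bound \eqref{Linf}, I apply this with $F_p(u)=u^p/p$ for $p\ge 2$, deducing $\|\rho_\sigma(t)\|_p\le \|\rho_\sigma^0\|_p$ for every such $p$. Since $\rho_\sigma(t)$ and $\rho_\sigma^0$ both belong to $L^1\cap L^\infty$, I may pass to the limit $p\to\infty$ to conclude $\|\rho_\sigma(t)\|_\infty\le \|\rho_\sigma^0\|_\infty$. The definition \eqref{1.kappa} combined with Young's convolution inequality (Lemma \ref{lem.young}) and $\|W_\sigma\|_1=1$ yields
\[
\|\rho_\sigma^0\|_\infty \le \kappa_\sigma \|W_\sigma*\rho^0\|_\infty \le \kappa_\sigma \|\rho^0\|_\infty,
\]
and the preceding lemma provides $\kappa_\sigma\le (1-C\sigma E)^{-1}\le C$ for sufficiently small $\sigma>0$, closing \eqref{Linf}.

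For \eqref{L2H1}, I take $F(u)=u^2/2$ in Lemma \ref{lem.energy}, so that the local dissipation reduces to $\sigma\int_{\R^d}|\na\rho_\sigma|^2 dx$. Integrating the resulting inequality over $(0,T)$ and using $\|\rho_\sigma^0\|_2\le \kappa_\sigma\|\rho^0\|_2\le C\|\rho^0\|_2$ (again by Young's inequality) produces
\[
\sup_{0<t<T}\|\rho_\sigma(t)\|_2^2 + 2\sigma\int_0^T\|\na\rho_\sigma\|_2^2 dt \le C\|\rho^0\|_2^2.
\]
Since $\sigma\in(0,1)$, combining the two parts gives $\sigma\int_0^T\|\rho_\sigma\|_{H^1(\R^d)}^2 dt \le C(1+T)\|\rho^0\|_2^2$, which is precisely \eqref{L2H1}.

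The only technical subtlety I expect is that $F_p''(u)=(p-1)u^{p-2}$ is unbounded for $p>2$, whereas Lemma \ref{lem.energy} was first derived for bounded $F''$. This is resolved exactly as in the proof of that lemma: replace $F_p''$ by its truncation $F_p''\wedge k$, apply the identity for the truncated functional, and then pass $k\to\infty$ by monotone convergence. Because each term on the right-hand side carries a fixed sign (the local one is $\le 0$ and the nonlocal integrand is pointwise $\ge 0$), no cancellation arises and the limit passes term by term.
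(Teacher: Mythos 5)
Your argument is correct and rests on the same cornerstone as the paper's proof, namely Lemma \ref{lem.energy} and the observation that $t\mapsto\int_{\R^d}F(\rho_\sigma)\,dx$ is nonincreasing for convex $F$ (since $G$ nondecreasing and $f_\sigma$ nondecreasing make the nonlocal integrand nonnegative); for \eqref{L2H1} the choice $F(u)=u^2$ is exactly what the paper uses, and the $\kappa_\sigma$ bookkeeping via Young's convolution inequality matches. The only deviation is in how the $L^\infty$ bound is extracted: you apply the monotonicity with $F_p(u)=u^p/p$ to obtain $\|\rho_\sigma(t)\|_p\le\|\rho_\sigma^0\|_p$ and then send $p\to\infty$ using $\rho_\sigma(t),\,\rho_\sigma^0\in L^1\cap L^\infty$, whereas the paper instead chooses a single barrier-type convex $F$ that vanishes on $[0,\|\rho_\sigma^0\|_\infty]$, is positive above this threshold, and grows at most linearly, so that $\int_{\R^d}F(\rho_\sigma^0)\,dx=0$ forces $\int_{\R^d}F(\rho_\sigma(t))\,dx=0$ and hence $\rho_\sigma(t)\le\|\rho_\sigma^0\|_\infty$ a.e. Both routes are standard and equally rigorous here, and your closing remark about truncating $F_p''$ before passing $k\to\infty$ is precisely the device the paper already builds into the proof of Lemma \ref{lem.energy} to justify unbounded $F''$.
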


Lemma \ref{lem.bound} and mass conservation imply that
$\|\rho_\sigma(t)\|_p$ is bounded for all $t>0$ and $1\le p\le \infty$.
Observe that $\kappa_\sigma\to 1$ as $\sigma\to 0$. So, if $\rho_\sigma(t)
\to\rho(t)$ a.e., the dissipation of the $L^\infty$ norm follows, as stated
in Theorem \ref{thm.ex} (iv).

\begin{proof}
The convexity of $F$ shows that $G$, defined in Lemma \ref{lem.energy}, is
nondecreasing. Therefore, $(d/dt)\int_{\R^d}F(\rho_\sigma)dx\le 0$ and
$$
  \sup_{t>0}\int_{\R^d}F(\rho_\sigma(t))dx \le \int_{\R^d}F(\rho^0_\sigma)dx.
$$
We choose a convex function $F\in C^2([0,\infty))$ such that 
$F(u)=0$ for $u\le\|\rho_\sigma^0\|_\infty$,
$F(u)>0$ for $u>\|\rho_\sigma^0\|_\infty$ and satisfying $F(u)\le Cu$ for $u\to\infty$.
Then
$$
  0 \le\int_{\R^d}F(\rho_\sigma(t))dx \le \int_{\R^d}F(\rho^0_\sigma)dx 
	= 0\quad\mbox{for }t>0.
$$
Consequently, $\rho_\sigma(x,t)\le\|\rho_\sigma^0\|_\infty\le\kappa_\sigma
\|\rho^0\|_\infty$ for $t>0$, 
showing the $L^\infty(\R^d)$ bound. Finally, choosing $F(u)=u^2$ in
Lemma \ref{lem.energy}, the $L^2(0,T;H^1(\R^d))$ estimate follows.
\end{proof}


\subsection{Entropy and moment estimates}\label{ss.entropy}

We need a fractional derivative estimate for $f_\sigma(\rho_\sigma)$, which
is not an immediate consequence of Lemma \ref{lem.energy}. 
To this end, we define the entropy density 
$$
  h_\sigma(u) = \int_0^u\int_1^v\frac{f_\sigma'(w)}{w}dwdv, \quad u\ge 0.
$$

\begin{lemma}[Entropy balance]
It holds for all $t>0$ that
$$
  \frac{d}{dt}\int_{\R^d}h_\sigma(\rho_\sigma)dx
	+ 4\sigma\int_{\R^d}f'_\sigma(\rho_\sigma)|\na\rho_\sigma^{1/2}|^2 dx
	+ \int_{\R^d}|\na(-\Delta)^{-s/2}f_\sigma(\rho_\sigma)|^2dx = 0.
$$
In particular, for all $T>0$, there exists $C>0$ such that
\begin{equation}\label{u.H1s}
  \|f_\sigma(\rho_\sigma)\|_{L^2(0,T;H^{1-s}(\R^d))} \le C.
\end{equation}
\end{lemma}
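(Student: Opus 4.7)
My plan is to prove the identity by testing the strong formulation of \eqref{1.rho} against $h_\sigma'(\rho_\sigma)$; by construction $h_\sigma''(u)=f_\sigma'(u)/u$. The diffusion term would give
$$
\sigma\int_{\R^d}h_\sigma''(\rho_\sigma)|\na\rho_\sigma|^2\,dx
=\sigma\int_{\R^d}\frac{f_\sigma'(\rho_\sigma)}{\rho_\sigma}|\na\rho_\sigma|^2\,dx
=4\sigma\int_{\R^d}f_\sigma'(\rho_\sigma)|\na\sqrt{\rho_\sigma}|^2\,dx,
$$
using $|\na\sqrt{\rho_\sigma}|^2=|\na\rho_\sigma|^2/(4\rho_\sigma)$. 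For the drift term, the chain-rule identity $\rho_\sigma\na h_\sigma'(\rho_\sigma)=f_\sigma'(\rho_\sigma)\na\rho_\sigma=\na f_\sigma(\rho_\sigma)$ reduces it to $\int_{\R^d}\na f_\sigma(\rho_\sigma)\cdot\na(-\Delta)^{-s}f_\sigma(\rho_\sigma)\,dx$, and the Plancherel identity $\int u(-\Delta)^{1-s}u\,dx=\int|\na(-\Delta)^{-s/2}u|^2\,dx$ with $u=f_\sigma(\rho_\sigma)$ produces the dissipation integral. Summing with the time-derivative term $\frac{d}{dt}\int_{\R^d}h_\sigma(\rho_\sigma)\,dx$ yields the claimed identity.

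\textbf{Rigorous justification.} Since $h_\sigma'(u)=\int_1^u f_\sigma'(w)/w\,dw$ has a logarithmic singularity at $u=0$, it is not directly admissible as a test function. I would regularize by replacing $h_\sigma''(u)=f_\sigma'(u)/u$ with $h_{\sigma,\eta}''(u)=f_\sigma'(u)/(u+\eta)$ for $\eta>0$, so that $h_{\sigma,\eta}'(\rho_\sigma)$ is bounded by Lemma \ref{lem.bound} and, thanks to the strong-solution regularity from Theorem \ref{thm.rho}, lies in $L^2(0,T;H^1(\R^d))$. Each $\eta$-analog of the manipulations above is then legitimate, including the integration by parts against $\na(-\Delta)^{-s}f_\sigma(\rho_\sigma)$ (controlled via the Riesz representation $\K*\cdot$). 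The $\eta\to 0$ limit would rely on monotone and dominated convergence together with the observation that $\rho_\sigma^0$ has compact support (by the cutoff $\Xi(\sigma\cdot)$ in \eqref{1.kappa}) and is bounded, so that $\int_{\R^d}h_\sigma(\rho_\sigma^0)\,dx$ remains uniformly finite even though $h_\sigma$ behaves like $u\log u$ near zero. This passage to the limit is where I expect the main technical obstacle: one must arrange that neither the entropy nor the dissipation term develops spurious contributions as the regularization is removed, and the control comes precisely from the careful construction of $\rho_\sigma^0$ in \eqref{1.kappa} together with the $L^\infty$-bound $\|\rho_\sigma\|_\infty\le C$.

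\textbf{The $H^{1-s}$ estimate.} Once the identity is established, dropping the nonnegative diffusion term and integrating in time yields
$$
\int_0^T\|\na(-\Delta)^{-s/2}f_\sigma(\rho_\sigma)(t)\|_2^2\,dt\le\int_{\R^d}h_\sigma(\rho_\sigma^0)\,dx\le C.
$$
By Plancherel, $\|\na(-\Delta)^{-s/2}u\|_2=\|(-\Delta)^{(1-s)/2}u\|_2$, which is the homogeneous $\dot H^{1-s}(\R^d)$ seminorm of $u$. A uniform $L^2(\R^d)$ bound on $f_\sigma(\rho_\sigma)$ follows from $|f_\sigma(u)|\le C|u|$ on the bounded range of $\rho_\sigma$ together with $\rho_\sigma\in L^\infty(0,T;L^1(\R^d)\cap L^\infty(\R^d))$ from \eqref{3.cons} and Lemma \ref{lem.bound}. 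Combining the seminorm bound with this $L^2$ bound yields the full $H^{1-s}$ estimate \eqref{u.H1s}.
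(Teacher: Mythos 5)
Your overall strategy — regularizing the entropy density (your $h_{\sigma,\eta}$ with $h_{\sigma,\eta}''(u)=f_\sigma'(u)/(u+\eta)$ is exactly the paper's $h_\sigma^\delta$), carrying out the formal chain-rule and symmetrization computations, and passing to the limit — matches the paper's approach, and your formal manipulations (drift term reducing to $\int\na f_\sigma(\rho_\sigma)\cdot\na(-\Delta)^{-s}f_\sigma(\rho_\sigma)$, Plancherel identity, the $H^{1-s}$ estimate from the seminorm plus an $L^2$ bound on $f_\sigma(\rho_\sigma)$) are all correct.

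However, your justification of the limit $\eta\to 0$ has a genuine gap, and you have misidentified where the control comes from. You claim it follows from the compact support of $\rho_\sigma^0$ (via the cutoff $\Xi(\sigma\cdot)$) and the bound $\|\rho_\sigma\|_\infty\le C$. This is not sufficient: to pass to the limit you need $\int_{\R^d}|h_\sigma(\rho_\sigma(t))|\,dx<\infty$ uniformly for $t\in(0,T)$, and since $h_\sigma(u)\sim u\log u$ near $u=0$, this requires $\int_{\R^d}\rho_\sigma(t)^\theta\,dx<\infty$ for some $\theta<1$. Having $\rho_\sigma\in L^1\cap L^\infty$ does \emph{not} give $\rho_\sigma^\theta\in L^1$ for $\theta<1$ (a density decaying like $(1+|x|)^{-d-\eps}$ with $\eps$ small is in $L^1\cap L^\infty$, yet $\rho_\sigma^\theta\notin L^1$ when $\theta(d+\eps)\le d$); and the compact support of $\rho_\sigma^0$ is instantly destroyed by the $\sigma\Delta$ diffusion, so it provides no help for $t>0$. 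The paper closes this gap by a separate moment argument: it bounds $\int\rho_\sigma^\theta\,dx\le\int(1+|x|^2)^{\alpha/2}\rho_\sigma\,dx + C$ (Young's inequality with $d/(d+\alpha)<\theta<1$, $0<\alpha<1$), then derives a bound on the $\alpha$-moment of $\rho_\sigma(t)$ by testing \eqref{1.rho} with $(1+|x|^2)^{\alpha/2}$. Crucially, the resulting moment estimate involves the dissipation itself, via the Hardy--Littlewood--Sobolev inequality, producing a self-referential inequality of the form
$$\sup_{0<t<T}\int_{\R^d}|h_\sigma^\delta(\rho_\sigma(t))|\,dx \le C(\eta') + \eta'\int_0^T\|\na(-\Delta)^{-s/2}f_\sigma(\rho_\sigma)\|_2^2\,dt,$$
which must then be closed by absorbing the $\eta'$-term into the dissipation appearing on the left-hand side of the integrated entropy balance. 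This absorption step is the heart of the proof, and it is missing entirely from your sketch.
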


\begin{proof}
The idea is to apply Lemma \ref{lem.energy}. 
Since $h_\sigma\not\in C^2([0,\infty))$, we cannot use the lemma directly.
Instead, we apply it to the regularized function
$$
  h_\sigma^\delta(u) = \int_0^u\int_1^v\frac{f_\sigma'(w)}{w+\delta}dwdv, \quad u\ge 0,
$$
where $\delta>0$. Choosing $F=h_\sigma^\delta$ in Lemma \ref{lem.energy} gives
\begin{align}
  \frac{d}{dt}\int_{\R^d}&h_\sigma^\delta(\rho_\sigma)dx 
	+ 4\sigma\int_{\R^d}f_\sigma'(\rho_\sigma)\frac{\rho_\sigma}{\rho_\sigma+\delta}
	|\na\rho_\sigma^{1/2}|^2 dx \label{3.aux0} \\
	&=- \frac{c_{d,1-s}}{2}\int_{\R^d}\int_{\R^d}\frac{(f_\sigma^\delta(\rho_\sigma(x))
	- f_\sigma^\delta(\rho_\sigma(y)))(f_\sigma(\rho_\sigma(x))-f_\sigma(\rho_\sigma(y))
	}{|x-y|^{d+2(1-s)}}dxdy, \nonumber
\end{align}
where $f_\sigma^\delta(u):=\int_0^u (v/(v+\delta))f_\sigma'(v)dv$ for $u\ge 0$.

{\em Step 1: Estimate of $h_\sigma^\delta$.} The pointwise limit
$h_\sigma^\delta(\rho_\sigma)\to h_\sigma(\rho_\sigma)$ holds a.e.\ in 
$\R^d\times(0,T)$ as $\delta\to 0$. We observe that for all $0<u\le 1$, 
$$
  |h_\sigma^\delta(u)| \le \sup_{0<v<1}f'(v)\int_0^u\int_v^1\frac{dw}{w}dv
	\le Cu(|\log u|+1),
$$
while for all $u>1$, since $f_\sigma'\ge 0$ in $[0,\infty)$,
\begin{align*}
  |h_\sigma^\delta(u)| &\le \int_0^1\int_v^1\frac{f_\sigma'(w)}{w+\delta}dwdv
	+ \int_1^u\int_1^v\frac{f_\sigma'(w)}{w+\delta}dwdv \\
	&\le C + \int_1^u\int_1^v f_\sigma'(w)dwdv
	\le C + \int_0^u f_\sigma(v)dv 
	\le C + uf_\sigma(u). 
\end{align*}
The last inequality follows after integration of $f_\sigma(v)\le f_\sigma(v)
+ vf_\sigma'(v) = (vf_\sigma(v))'$ in $(0,u)$. Therefore, since
$\rho_\sigma\le\|\rho_\sigma^0\|_\infty$ a.e.\ in $\R^d\times(0,\infty)$,
we find that
$$
  |h_\sigma^\delta(\rho_\sigma)| \le C\rho_\sigma(|\log\rho_\sigma|+1)
	\mathrm{1}_{\{\rho_\sigma\le 1\}} + C\mathrm{1}_{\{\rho_\sigma>1\}}
	\le C(\rho_\sigma^\theta + \rho_\sigma),
$$
where $\theta\in(0,1)$ is arbitrary, and consequently, because of mass conservation,
\begin{equation}\label{3.hsigma}
  \int_{\R^d}|h_\sigma^\delta(\rho_\sigma)|dx
	\le C + C\int_{\R^d}\rho_\sigma^\theta dx.
\end{equation}

{\em Step 2: Estimate of $\int_{\R^d}\rho_\sigma^\theta dx$.}
Let $0<\alpha<1$ and $d/(d+\alpha)<\theta<1$. Then, by Young's inequality,
\begin{align*}
  \int_{\R^d}\rho_\sigma^\theta dx &= \int_{\R^d}(1+|x|^2)^{\alpha\theta/2}
	\rho_\sigma^\theta(1+|x|^2)^{-\alpha\theta/2}dx \\
	&\le \int_{\R^d}(1+|x|^2)^{\alpha/2}\rho_\sigma dx
	+ C\int_{\R^d}(1+|x|^2)^{-\alpha\theta/(2(1-\theta))}dx \\
	&\le \int_{\R^d}(1+|x|^2)^{\alpha/2}\rho_\sigma dx + C,
\end{align*}
since the choice of $\theta$ guarantees that $-\alpha\theta/(2(1-\theta))<-d/2$,
so $\int_{\R^d}(1+|x|^2)^{-\alpha\theta/(2(1-\theta))}dx<\infty$.
To control the right-hand side, we need to bound a suitable moment of $\rho_\sigma$.
For this, we use the test function $(1+|x|^2)^{\alpha/2}$ in the weak
formulation of \eqref{1.rho}. (Actually, we need to use a cutoff to guarantee
integrability, but we leave the technical details to the reader.) We find that
\begin{align*}
  \int_{\R^d}(1+|x|^2)^{\alpha/2}\rho_\sigma(x,t)dx
	&= \int_{\R^d}(1+|x|^2)^{\alpha/2}\rho_\sigma^0 dx 
	+ \sigma\int_0^t\int_{\R^d}\rho_\sigma
	\Delta(1+|x|^2)^{\alpha/2}dxds \\
	&\phantom{xx}{}-\alpha\int_0^t\int_{\R^d}\rho_\sigma(1+|x|^2)^{\alpha/2-1}x
	\cdot\na(-\Delta)^{-s}f_\sigma(\rho_\sigma)dxds.
\end{align*}
Since $\alpha<1$, the terms $\Delta(1+|x|^2)^{\alpha/2}$ and $x(1+|x|^2)^{\alpha/2-1}$
are bounded in $\R^d$. Thus, taking into account the assumption on $\rho^0$
and mass conservation,
$$
  \sup_{0<t<T}\int_{\R^d}(1+|x|^2)^{\alpha/2}\rho_\sigma(x,t)dx
	\le C + C\int_0^T\int_{\R^d}\rho_\sigma(-\Delta)^{-s/2}|\na(-\Delta)^{-s/2}
	f_\sigma(\rho_\sigma)|dxdt.
$$
Next, we apply the Hardy--Littlewood--Sobolev inequality (see Appendix \ref{sec.frac})
and the Young inequality (see Lemma \ref{lem.hls}) 
and use the fact that $\rho_\sigma(t)$ is bounded in any $L^p(\R^d)$:
\begin{align*}
  \sup_{0<t<T}&\int_{\R^d}(1+|x|^2)^{\alpha/2}\rho_\sigma(x,t)dx \\
	&\le  C + \int_0^T\|\rho_\sigma\|_{2d/(d+2s)}\|(-\Delta)^{-s/2}[\na(-\Delta)^{-s/2}
	f_\sigma(\rho_\sigma)]\|_{2d/(d-2s)} dt \\
	&\le C + \int_0^T\|\rho_\sigma\|_{2d/(d+2s)}\|\na(-\Delta)^{-s/2}
	f_\sigma(\rho_\sigma)\|_2 dt \\
	&\le C(\eta) + \eta\int_0^T\|\na(-\Delta)^{-s/2}f_\sigma(\rho_\sigma)\|_2^2 dt 
\end{align*}
for all $\eta>0$. This proves that
$$
  \int_{\R^d}\rho_\sigma^\theta dx 
	\le C(\eta) + \eta\int_0^T\|\na(-\Delta)^{-s/2}f_\sigma(\rho_\sigma)\|_2^2 dt.
$$

{\em Step 3: A priori estimate.}
Inserting the previous estimate into \eqref{3.hsigma} leads to
$$
  \sup_{0<t<T}\int_{\R^d}|h_\sigma^\delta(\rho_\sigma(x,t))|dx
	\le C(\eta) + \eta\int_0^T\|\na(-\Delta)^{-s/2}f_\sigma(\rho_\sigma)\|_2^2 dt.
$$
We integrate \eqref{3.aux0} in time and use the previous estimate:
\begin{align*}
	4\sigma&\int_0^T\int_{\R^d}f_\sigma'(\rho_\sigma)
	\frac{\rho_\sigma}{\rho_\sigma+\delta}
	|\na\rho_\sigma^{1/2}|^2 dxdt \\
	&\phantom{xx}{}
	+ \frac{c_{d,1-s}}{2}\int_0^T\int_{\R^d}\int_{\R^d}
	\frac{(f_\sigma^\delta(\rho_\sigma(x))
	- f_\sigma^\delta(\rho_\sigma(y)))(f_\sigma(\rho_\sigma(x))-f_\sigma(\rho_\sigma(y))
	}{|x-y|^{d+2(1-s)}}dxdydt \\
	&\le \int_{\R^d}|h_\sigma^\delta(\rho_\sigma(T))|dx 
	+ \int_{\R^d}|h_\sigma^\delta(\rho_\sigma^0)|dx
	\le C(\eta) + \eta\int_0^T\|\na(-\Delta)^{-s/2}f_\sigma(\rho_\sigma)\|_2^2 dt.
\end{align*}
We wish to pass to the limit $\delta\to 0$ in the previous inequality.
We deduce from dominated convergence 
that $f_\sigma^\delta(\rho_\sigma)\to f_\sigma(\rho_\sigma)$ a.e.\ in $\R^d\times
[0,\infty)$. The integrand of the second term on the left-hand side is nonnegative,
and we obtain from Fatou's lemma that
\begin{align}
	4\sigma&\int_0^T\int_{\R^d}f_\sigma'(\rho_\sigma)|\na\rho_\sigma^{1/2}|^2 dxdt
	+ \frac{c_{d,1-s}}{2}\int_0^T\int_{\R^d}\int_{\R^d}
	\frac{(f_\sigma(\rho_\sigma(x))-f_\sigma(\rho_\sigma(y)))^2}{|x-y|^{d+2(1-s)}}dxdydt 
	\label{3.aux} \\
	&\le C(\eta) + \eta\int_0^T\|\na(-\Delta)^{-s/2}f_\sigma(\rho_\sigma)\|_2^2 dt. 
	\nonumber
\end{align}
By the integral representation of the fractional Laplacian, 
$$
  \frac{c_{d,1-s}}{2}\int_{\R^d}\int_{\R^d}
	\frac{(f_\sigma(\rho_\sigma(x))-f_\sigma(\rho_\sigma(y)))^2}{|x-y|^{d+2(1-s)}}dxdy
	= \|\na(-\Delta)^{-s/2}f_\sigma(\rho_\sigma)\|_2^2,
$$
the last term in \eqref{3.aux} can be absorbed for sufficiently small $\eta>0$
by the second term on the left-hand side. This leads to the estimate
$$
  4\sigma\int_0^T\int_{\R^d}f_\sigma'(\rho_\sigma)|\na\rho_\sigma^{1/2}|^2 dxdt
	+ \int_0^T\int_{\R^d}|\na(-\Delta)^{-s/2}f_\sigma(\rho_\sigma)|^2 dxdt \le C.
$$
Thus, we can pass to the limit $\delta\to 0$ in \eqref{3.aux0} giving the
desired entropy balance. Finally, bound \eqref{u.H1s} follows from the 
definition of the $H^{1-s}(\R^d)$ norm and the facts that 
$f_\sigma(\rho_\sigma) \in L^2(\R^d)$ since $f_\sigma$ is locally Lipschitz 
continuous, $f_\sigma(0)=0$, and $\rho_\sigma $ is bounded both in 
$L^\infty(\R^d)$ and $L^2(\R^d)$ independently of $\sigma$.
\end{proof}

\begin{lemma}[Moment estimate]\label{lem.mom}
It holds that 
$$
  \sup_{0<t<T}\int_{\R^d}\rho_\sigma(x,t)|x|^{2d/(d-2s)}dx
	\le C,
$$
where $C>0$ depends on $T$ and the $L^1(\R^d)$ norms of $\rho^0$ and
$|\cdot|^{2d/(d-2s)}\rho^0$.
\end{lemma}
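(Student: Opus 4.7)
The plan is to test equation \eqref{1.rho} against the weight $\phi(x):=(1+|x|^2)^{d/(d-2s)}$ (which behaves like $|x|^{m}$ at infinity, with $m:=2d/(d-2s)\ge 2$), after introducing a smooth spatial cutoff $\chi(\cdot/R)$ to guarantee integrability against $\rho_\sigma$, and to pass to $R\to\infty$ at the end; this is justified by the strong regularity of $\rho_\sigma$ recorded in Theorem \ref{thm.rho}. The choice $m=2d/(d-2s)$ is dictated by the Hardy--Littlewood--Sobolev embedding $(-\Delta)^{-s/2}\colon L^2\to L^{2d/(d-2s)}$, which is the only tool for absorbing the nonlocal gradient $\na(-\Delta)^{-s}f_\sigma(\rho_\sigma)$ against the $L^2$-bound on $G:=\na(-\Delta)^{-s/2}f_\sigma(\rho_\sigma)$ coming from the entropy balance of Subsection \ref{ss.entropy}.

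Setting $X(t):=\int_{\R^d}\rho_\sigma\,\phi\,dx$, integration by parts in \eqref{1.rho} leads to
\[
  X'(t) = \sigma\int_{\R^d}\rho_\sigma\Delta\phi\,dx - \int_{\R^d}\rho_\sigma\,\na(-\Delta)^{-s}f_\sigma(\rho_\sigma)\cdot\na\phi\,dx.
\]
Since $m\ge 2$ gives $|\Delta\phi|\le C\phi$ and $|\na\phi|\le C(1+|x|^2)^{(m-1)/2}$, the diffusion term is bounded by $C\sigma X(t)$. For the drift term, I would rewrite $\na(-\Delta)^{-s}f_\sigma(\rho_\sigma)=(-\Delta)^{-s/2}G$, apply H\"older with the conjugate pair $(2d/(d+2s),\,2d/(d-2s))$, and invoke HLS (Lemma \ref{lem.hls}) to obtain $\|(-\Delta)^{-s/2}G\|_{2d/(d-2s)}\le C\|G(t)\|_2$. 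The algebraic identity $(m-1)\cdot\frac{2d}{d+2s}=m$ — which is exactly what forces the choice $m=2d/(d-2s)$ — yields $|\na\phi|^{2d/(d+2s)}\le C\phi$; combined with the $\sigma$-uniform $L^\infty$-bound from Lemma \ref{lem.bound} (which gives $\rho_\sigma^{2d/(d+2s)}\le C\rho_\sigma$ since $2d/(d+2s)\ge 1$), this produces
\[
  \left|\int \rho_\sigma\,\na(-\Delta)^{-s}f_\sigma(\rho_\sigma)\cdot\na\phi\,dx\right| \le C\,\|G(t)\|_2\,X(t)^{(d+2s)/(2d)}.
\]

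Writing $\alpha:=(d+2s)/(2d)\in(0,1)$ and exploiting the elementary sublinear bound $X^\alpha\le 1+X$, one arrives at $X'(t)\le (C\sigma+C\|G(t)\|_2)X(t)+C\|G(t)\|_2$. Since $\|G\|_2\in L^2(0,T)\hookrightarrow L^1(0,T)$, Gronwall's inequality delivers $X(t)\le C(T)$ on $[0,T]$ provided $X(0)=\int\rho^0_\sigma\phi\,dx$ is uniformly bounded in $\sigma$. This last point follows from the definition \eqref{1.kappa}, Young's convolution inequality, the pointwise estimate $(1+|x|^2)^{m/2}\le C[(1+|x-y|^2)^{m/2}+|y|^{m}]$, Hypothesis (H2), and the fact that $\int W_\sigma(z)(1+|z|^2)^{m/2}dz$ stays bounded for small $\sigma$. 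Removing the cutoff by monotone convergence then gives the claim.

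The main obstacle is the exponent matching: the HLS pair $(2d/(d+2s),\,2d/(d-2s))$ is the only pairing that couples with the sole available bound on $G$, and the weight $|x|^{2d/(d-2s)}$ is \emph{precisely} what renders $|\na\phi|^{2d/(d+2s)}$ controllable by $\phi$ itself — any larger exponent leaves residual powers of $|x|$ that cannot be closed, which explains why this exponent already appears in Hypothesis (H2) and in the statement of the lemma. A secondary care point is closing the Gronwall step with only $G\in L^2(0,T;L^2)$: applying Young with conjugate exponent $1/(1-\alpha)=m>2$ would require $\|G\|_2\in L^m(0,T)$, unavailable from the entropy inequality, so one must instead rely on the crude sublinear bound $X^\alpha\le 1+X$.
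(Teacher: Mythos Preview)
Your proof is correct and follows essentially the same route as the paper: test \eqref{1.rho} against the weight $|x|^m$ (you use the smooth variant $(1+|x|^2)^{m/2}$), apply H\"older with the conjugate pair $(2d/(d+2s),\,2d/(d-2s))$, control $\|\na(-\Delta)^{-s}f_\sigma(\rho_\sigma)\|_{2d/(d-2s)}$ via HLS/Sobolev from the entropy bound on $G$, exploit the exponent identity $(m-1)\cdot 2d/(d+2s)=m$ together with the uniform $L^\infty$ bound on $\rho_\sigma$, and close by Gronwall. The only cosmetic differences are your use of $X^\alpha\le 1+X$ in place of Young's inequality and your slightly more explicit treatment of the bound on $X(0)=\int\rho_\sigma^0\phi\,dx$.
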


\begin{proof}
For the following computations, we would need to use cut-off functions 
to make the calculations rigorous. We leave the details to the reader, as we wish
to simplify the presentation. Let $m=2d/(d-2s)$. Since 
$|\cdot|^m\rho^0\in L^1(\R^d)$ by assumption, we can compute
\begin{align}
  \frac{d}{dt}\int_{\R^d}\rho_\sigma(t)\frac{|x|^m}{m}dx
	&= \sigma(m-2+d)\int_{\R^d}|x|^{m-2}\rho_\sigma dx
	- \int_{\R^d}\rho_\sigma|x|^{m-2}x\cdot\na(-\Delta)^{-s}f_\sigma(\rho_\sigma)dx 
	\nonumber \\
	&\le C\||\cdot|^{m-2}\rho_\sigma\|_{1}
	+ \||\cdot|^{m-1}\rho_\sigma\|_{2d/(d+2s)}
	\|\na(-\Delta)^{-s}f_\sigma(\rho_\sigma)\|_{2d/(d-2s)}. \label{3.aux2}
\end{align}
By Young's inequality and mass conservation, we have
$$
  \||\cdot|^{m-2}\rho_\sigma\|_{1} \le C\int_{\R^d}(1+|x|^{m})\rho_\sigma dx
	\le C + C\int_{\R^d}|x|^m\rho_\sigma dx.
$$
It follows from \eqref{u.H1s} that $\na(-\Delta)^{-s}f_\sigma(\rho_\sigma)$
is bounded in $L^2(0,T; H^s(\R^d))$. In particular, because of the 
Sobolev embedding $H^{s}(\R^d)\hookrightarrow L^{m}(\R^d)$, 
$$
  \|\na(-\Delta)^{-s}f_\sigma(\rho_\sigma)\|_{L^2(0,T;L^{m}(\R^d))}\le C.
$$
Furthermore, using $\rho_\sigma\in L^\infty(0,\infty;L^\infty(\R^d))$,
Young's inequality, and the property $2d/(d+2s)\ge 1$ (recall that $d\ge 2$)
\begin{align*}
  \big\||\cdot|^{m-1}\rho_\sigma\big\|_{2d/(d+2s)}^{2d/(d+2s)}
	&= \int_{\R^d}\rho_\sigma^{2d/(d+2s)}|x|^{2d(m-1)/(d+2s)}dx \\
	&\le C + C\int_{\R^d}\rho_\sigma|x|^{2d(m-1)/(d+2s)}dx.
\end{align*}
Thus, we infer from \eqref{3.aux2} and the identity $2d(m-1)/(d+2s)=m$ that
$$
  \frac{d}{dt}\int_{\R^d}\rho_\sigma(t)\frac{|x|^m}{m}dx
	\le C + C\int_{\R^d}\rho_\sigma(t)|x|^m dx,
$$
and Gronwall's lemma concludes the proof.
\end{proof}


\subsection{Higher-order estimate}\label{ss.highreg}

We need some estimates in higher-order Sobolev spaces.

\begin{proposition}[Higher-order regularity]\label{prop.regul}
Let $T>0$, $1<p<\infty$ and $2\le q<\infty$. Then there exists $C>0$,
depending on $T$, $\sigma$, $p$, and $q$, such that 
$$
  \|\rho_\sigma\|_{L^p(0,T;W^{3,p}(\R^d))}
	+ \|\pa_t\rho_\sigma\|_{L^p(0,T;W^{1,p}(\R^d))}
	+ \|\rho_\sigma\|_{C^0([0,T];W^{2,q}(\R^d))} \le C.
$$
\end{proposition}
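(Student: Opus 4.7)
The plan is to view \eqref{1.rho} as an inhomogeneous heat equation
\begin{equation*}
  \pa_t\rho_\sigma - \sigma\Delta\rho_\sigma = F, \qquad F = \diver\bigl(\rho_\sigma\na(-\Delta)^{-s}f_\sigma(\rho_\sigma)\bigr),
\end{equation*}
and to apply maximal $L^p$-parabolic regularity (as recorded in Appendix \ref{sec.regul}) in a bootstrap scheme starting from the bounds already secured: $\rho_\sigma\in L^\infty(0,T;L^1(\R^d)\cap L^\infty(\R^d))$ by Lemma \ref{lem.bound}, $\sqrt{\sigma}\rho_\sigma\in L^2(0,T;H^1(\R^d))$, and $f_\sigma(\rho_\sigma)\in L^2(0,T;H^{1-s}(\R^d))$ by \eqref{u.H1s}. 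Before starting the bootstrap I observe that the regularized initial datum $\rho_\sigma^0$ defined in \eqref{1.kappa} is smooth and compactly supported, since $W_\sigma\in C_0^\infty(\R^d)$ and $\Xi(\sigma\cdot)\in C_0^\infty(\R^d)$; hence $\rho_\sigma^0\in W^{k,p}(\R^d)$ for every $k\in\N$ and $p\in[1,\infty]$, with norms depending on $\sigma$, providing enough data regularity to initialize the maximal-regularity estimates.

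The first bootstrap step is to place $F$ in $L^p(0,T;L^p(\R^d))$. Expanding
\begin{equation*}
  F = \na\rho_\sigma\cdot\na(-\Delta)^{-s}f_\sigma(\rho_\sigma) - \rho_\sigma\,(-\Delta)^{1-s}f_\sigma(\rho_\sigma),
\end{equation*}
the uniform $L^\infty$ bound on $\rho_\sigma$ controls $f_\sigma(\rho_\sigma)$ pointwise (and, by interpolation with mass conservation, in every $L^p$), while the Hardy--Littlewood--Sobolev inequalities from Lemmas \ref{lem.hls}--\ref{lem.hlsh} together with Riesz-transform boundedness on $L^p$ estimate the two fractional factors in suitable Lebesgue spaces; the spatial derivative $\na\rho_\sigma$ is handled via $\rho_\sigma\in L^2(0,T;H^1(\R^d))$. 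Maximal $L^p$-regularity then yields $\rho_\sigma\in L^p(0,T;W^{2,p}(\R^d))$ and $\pa_t\rho_\sigma\in L^p(0,T;L^p(\R^d))$ for any $p\in(1,\infty)$, with constants depending on $\sigma$, $p$, $T$, and the norms of $\rho_\sigma^0$.

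A second bootstrap step differentiates the equation once in space: $\pa_t(\pa_i\rho_\sigma)-\sigma\Delta(\pa_i\rho_\sigma)=\pa_i F$, where the chain and product rules produce the terms $\DD^2\rho_\sigma\cdot\na(-\Delta)^{-s}f_\sigma(\rho_\sigma)$, $\na\rho_\sigma\otimes\DD^2(-\Delta)^{-s}f_\sigma(\rho_\sigma)$, $(\pa_i\rho_\sigma)(-\Delta)^{1-s}f_\sigma(\rho_\sigma)$, and $\rho_\sigma\,\pa_i(-\Delta)^{1-s}f_\sigma(\rho_\sigma)$. Each is estimated in $L^p(0,T;L^p(\R^d))$ by combining the first-step information $\rho_\sigma\in L^p(0,T;W^{2,p}(\R^d))$, the pointwise $L^\infty$ control of $\rho_\sigma$ and of $f_\sigma^{(k)}(\rho_\sigma)$ for $k\le 2$, Hardy--Littlewood--Sobolev, and the decomposition $\DD^2(-\Delta)^{-s}=\DD^2(-\Delta)^{-1}\circ(-\Delta)^{1-s}$ which reduces second-order Riesz-type operators to a Calderón--Zygmund operator applied to a Riesz potential. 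Applying maximal regularity a second time gives $\rho_\sigma\in L^p(0,T;W^{3,p}(\R^d))$ and $\pa_t\rho_\sigma\in L^p(0,T;W^{1,p}(\R^d))$.

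The continuity statement $\rho_\sigma\in C^0([0,T];W^{2,q}(\R^d))$ follows from the trace embedding
\begin{equation*}
  L^p(0,T;W^{3,p}(\R^d))\cap W^{1,p}(0,T;W^{1,p}(\R^d))\hookrightarrow C^0\bigl([0,T];W^{3-2/p,p}(\R^d)\bigr),
\end{equation*}
combined with the Sobolev embedding $W^{3-2/p,p}(\R^d)\hookrightarrow W^{2,q}(\R^d)$, which holds provided $p$ is chosen large enough (depending on $q$ and $d$): specifically, $(1-2/p)-d(1/p-1/q)\ge 0$, which is ensured by taking $p\ge\max(q,d+2)$. The main obstacle I anticipate is the careful bookkeeping of Lebesgue exponents at each bootstrap step, in particular ensuring that the smoothing effect of $(-\Delta)^{-s}$, the $1-s$ fractional derivative from \eqref{u.H1s}, and the uniform $L^\infty\cap L^1$ control interpolate so that every product landing on the right-hand side sits in the same $L^p(L^p)$ space used in the maximal-regularity theorem.
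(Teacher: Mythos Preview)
Your overall strategy—maximal parabolic regularity plus bootstrap—matches the paper's, but the first bootstrap step as you describe it does not close. You claim to place
\[
  F = \na\rho_\sigma\cdot\na(-\Delta)^{-s}f_\sigma(\rho_\sigma) - \rho_\sigma\,(-\Delta)^{1-s}f_\sigma(\rho_\sigma)
\]
directly into $L^p(0,T;L^p(\R^d))$ for every $p\in(1,\infty)$, handling $\na\rho_\sigma$ ``via $\rho_\sigma\in L^2(0,T;H^1(\R^d))$''. But that a priori bound only gives $\na\rho_\sigma\in L^2(0,T;L^2(\R^d))$, which is far too weak: in the product $\na\rho_\sigma\cdot\na(-\Delta)^{-s}f_\sigma(\rho_\sigma)$ a H\"older/HLS pairing forces you to spend the only available gradient control on \emph{both} factors, and the exponents do not reach $L^p(L^p)$ for large $p$ (nor even for $p=2$ in general). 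Likewise, the $H^{1-s}$ bound on $f_\sigma(\rho_\sigma)$ from \eqref{u.H1s} does not control $(-\Delta)^{1-s}f_\sigma(\rho_\sigma)$ in $L^p$ when $s\le 1/2$, since that requires $2(1-s)$ derivatives.

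The paper's fix is precisely the mechanism you are missing. Instead of attacking $F$ at once, it first estimates only the flux $w=\rho_\sigma\na(-\Delta)^{-s}f_\sigma(\rho_\sigma)$ in $L^p(L^p)$ (so that $F=\diver w\in L^p(W^{-1,p})$) and uses the \emph{second} estimate \eqref{a.D1u} of Lemma~\ref{lem.regul}. The key inequality is the fractional Gagliardo--Nirenberg bound of Lemma~\ref{lem.GN1},
\[
  \|\na(-\Delta)^{-s}f_\sigma(\rho_\sigma)\|_p \le C\|f_\sigma(\rho_\sigma)\|_p^{2s}\|\na f_\sigma(\rho_\sigma)\|_p^{1-2s},
\]
which after Young's inequality gives $\|w\|_p\le C(\eta)+\eta\|\na\rho_\sigma\|_p$; feeding this into \eqref{a.D1u} and choosing $\eta$ small \emph{absorbs} the $\|\na\rho_\sigma\|_p$ term on the left and yields $\na\rho_\sigma\in L^p(L^p)$. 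Only with this in hand does the paper estimate $F$ itself (its Step~3), again via Lemmas~\ref{lem.GN1}--\ref{lem.GN2} and a second absorption of $\eta\|\DD^2\rho_\sigma\|_p$. Your proposal needs this absorption/closure structure; without it the bootstrap never starts for $p$ away from~$2$. Your approach to the $C^0([0,T];W^{2,q})$ bound via the trace embedding is legitimate (the paper instead uses the more elementary Lemma~\ref{lem.embedd} applied to $\pa_{ij}^2\rho_\sigma$), but it is moot until the first step is repaired.
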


\begin{proof}
{\em Step 1: Case $s>1/2$.}
If $s>1/2$ then $w:=\rho_\sigma\na(-\Delta)^{-s}f_\sigma(\rho_\sigma)$ does not
involve any derivative of $\rho_\sigma$. Thus $w\in L^p(0,T;L^p(\R^d))$ for $p<\infty$
and Lemma \ref{lem.regul} in Appendix \ref{sec.regul} implies that 
$\rho_\sigma\in L^p(0,T;W^{1,p}(\R^d))$. Iterating the argument leads to the conclusion.
Thus, in the following, we can assume that $0<s\le 1/2$.

{\em Step 2: Estimate of $\diver w$ in $L^p(0,T;W^{-1,p}(\R^d))$.}
We claim that $w$ can be estimated in 
$L^p(0,T;L^p(\R^d))$ for any $p<\infty$. 
Then, by Lemma \ref{lem.regul}, $\na\rho_\sigma\in L^p(0,T;L^p(\R^d))$.
We use the $L^\infty$ bound for $\rho_\sigma$, the fractional Gagliardo--Nirenberg 
inequality (Lemma \ref{lem.GN1}), and Young's inequality to find that
$$
  \|w\|_p
	\le C\|\na(-\Delta)^{-s}f_\sigma(\rho_\sigma)\|_p
	\le C\|f_\sigma(\rho_\sigma)\|_p^{2s}\|\na f_\sigma(\rho_\sigma)\|_p^{1-2s}
	\le C(\eta) + \eta\|\na\rho_\sigma\|_p,
$$
where $\eta>0$ is arbitrary. By estimate \eqref{a.D1u} in Lemma \ref{lem.regul}, 
$$
  \|\rho_\sigma\na(-\Delta)^{-s}f_\sigma(\rho_\sigma)\|_p = \|w\|_p
	\le C(\eta) + \eta\big(\|\rho_\sigma\na(-\Delta)^{-s}f_\sigma(\rho_\sigma)\|_p
	+ T^{1/p}\|\na\rho^0\|_p\big).
$$
Choosing $\eta>0$ sufficiently small shows the claim. 

{\em Step 3: Estimate of $\diver w$ in $L^p(0,T;L^{p}(\R^d))$.}
We use H\"older's inequality with $1/p=2s/(d+p)+1/q$ to obtain
\begin{align*}
  \|\diver w\|_p &\le \|\na\rho_\sigma\cdot\na(-\Delta)^{-s}f_\sigma(\rho_\sigma)\|_p
	+ \|\rho_\sigma(-\Delta)^{1-s}f_\sigma(\rho_\sigma)\|_p \\
	&\le \|\na\rho_\sigma\|_{(d+p)/(2s)}\|\na(-\Delta)^{-s}f_\sigma(\rho_\sigma)\|_q
	+ C\|(-\Delta)^{1-s}f_\sigma(\rho_\sigma)\|_p.
\end{align*}
By the fractional Gagliardo--Nirenberg inequality (Lemma \ref{lem.GN2} with 
$\theta=1+d/p-d/q-2s$ and Lemma \ref{lem.GN1} with $s$ replaced by $1-s$)
and Young's inequality, it follows that
\begin{align*}
  \|\diver w\|_p &\le C\|\na\rho_\sigma\|_{(d+p)/(2s)}
	\|f_\sigma(\rho_\sigma)\|_p^{1-\theta}\|\na f_\sigma(\rho_\sigma)\|_p^\theta
	+ C\|f_\sigma(\rho_\sigma)\|_p^{s}\|\DD^2 f_\sigma(\rho_\sigma)\|_p^{1-s} \\
	&\le C\|\na\rho_\sigma\|_{(d+p)/(2s)}\|\na\rho_\sigma\|_p^{\theta} 
	+ C\|f_\sigma'(\rho_\sigma)\DD^2\rho_\sigma
	+ f_\sigma''(\rho_\sigma)\na\rho_\sigma\otimes\na\rho_\sigma\|_p^{1-s} \\
	&\le C(\eta) + C\|\na\rho_\sigma\|_{(d+p)/(2s)}^{1/(1-\theta)}
	+ C\|\na\rho_\sigma\|_p + C\|\na\rho_\sigma\|_{2p}^2 + \eta\|\DD^2\rho_\sigma\|_p,
\end{align*}
where $\eta>0$ is arbitrary. Taking the $L^p(0,T)$ norm of the previous inequality
and observing that $p/(1-\theta)=(d+p)/(2s)$ (because of $\theta=d(1/p-1/q)+1-2s$),
it follows that
\begin{align*}
  \|\diver w\|_{L^p(0,T;L^p(\R^d))} &\le C 
	+ C\|\na\rho_\sigma\|_{L^{(d+p)/(2s)}(0,T;L^{(d+p)/(2s)}(\R^d))}^{1/(1-\theta)} 
	+ C\|\na\rho_\sigma\|_{L^p(0,T;L^p(\R^d))} \\
	&\phantom{xx}{}+ C\|\na\rho_\sigma\|_{L^{2p}(0,T;L^{2p}(\R^d))}^2
	+ \eta\|\DD^2\rho_\sigma\|_{L^p(0,T;L^p(\R^d))}.
\end{align*}
Lemma \ref{lem.regul} and Step 2 ($\na\rho_\sigma\in L^p(0,T;L^p(\R^d))$) show that
$$
  \|\pa_t\rho_\sigma\|_{L^p(0,T;L^p(\R^d))}
	+ (1-C\eta)\|\DD^2\rho_\sigma\|_{L^p(0,T;L^p(\R^d))}\le C.
$$
Choosing $\eta>0$ sufficiently small, this yields 
$\pa_t\rho_\sigma\in L^p(0,T;L^p(\R^d))$ and $\rho_\sigma\in L^p(0,T;$ $W^{2,p}(\R^d))$.
We deduce from Lemma \ref{lem.embedd}, applied to $\na\rho_\sigma$,
that $\na\rho_\sigma\in L^\infty(0,T;L^q(\R^d))$ for any $2\le q<\infty$.
(At this point, we need the restriction $q\ge 2$.)

{\em Step 4: Higher-order regularity.}
To improve the regularity of $\rho_\sigma$, we differentiate \eqref{1.rho} in space.
Recall that $\pa_i=\pa/\pa x_i$, $i=1,\ldots,d$. Then
\begin{align}
  \pa_t &\pa_i\rho_\sigma - \sigma\Delta \pa_i\rho_\sigma
	= \sum_{j=1}^d\pa_i\pa_j\big(\rho_\sigma\pa_j(-\Delta)^{-s}f_\sigma(\rho_\sigma)\big)
	= \sum_{j=1}^d\big(\pa_{ij}^2\rho_\sigma\pa_j(-\Delta)^sf_\sigma(\rho_\sigma) 
	\nonumber \\
	&{}+ \pa_i\rho_\sigma\pa_{jj}^2(-\Delta)^{-s}f_\sigma(\rho_\sigma)
	+ \pa_j\rho_\sigma\pa_{ij}^2(-\Delta)^{-s}f_\sigma(\rho_\sigma)
	+ \rho_\sigma\pa_{ijj}^3(-\Delta)^{-s}f_\sigma(\rho_\sigma)\big). \label{3.aux4}
\end{align}
We estimate the right-hand side term by term. Let $0<s\le 1/2$.
First, by H\"older's inequality
with $1/p=1/q+1/r$, $1<p<q<\infty$, $\max\{2,p\}<r<\infty$ and the
fractional Gagliardo--Nirenberg inequality (Lemma \ref{lem.GN1}),
\begin{align*}
  \|\pa_{ij}^2&\rho_\sigma\pa_j(-\Delta)^sf_\sigma(\rho_\sigma)\|_{L^p(0,T;L^p(\R^d))}^p
	\le \int_0^T\|\pa_{ij}^2\rho_\sigma\|_q^p
	\|\pa_j(-\Delta)^sf_\sigma(\rho_\sigma)\|_r^p dt \\
	&\le C\int_0^T\|\pa_{ij}^2\rho_\sigma\|_q^p\|f_\sigma(\rho_\sigma)\|_r^{(1-2s)p}
	\|\na f_\sigma(\rho_\sigma)\|_r^{2sp}dt \\
	&\le C\|f_\sigma(\rho_\sigma)\|_{L^\infty(0,T;L^r(\R^d))}^{(1-2s)p}
	\|\na f_\sigma(\rho_\sigma)\|_{L^\infty(0,L^r(\R^d))}^{2sp}
	\int_0^T\|\pa_{ij}^2\rho_\sigma\|_q^pdt \le C.
\end{align*}
The second and third term on the right-hand side of \eqref{3.aux4} can be treated
in a similar way, observing that $\pa_{ij}^2(-\Delta)^{-s}=\pa_j(-\Delta)^{-s}\pa_i$.
The last term is estimated according to
\begin{align*}
  \|\rho_\sigma&\pa_{ijj}^3(-\Delta)^{-s}f_\sigma(\rho_\sigma)\|_p
	\le C\|\pa_{ijj}^3(-\Delta)^{-s}f_\sigma(\rho_\sigma)\|_p
	\le C\|\pa_{jj}^2 f_\sigma(\rho_\sigma)\|_p^{2s}
	\|\na\pa_{jj}^2f_\sigma(\rho_\sigma)\|_p^{1-2s} \\
	&\le C(\eta)\|\pa_{jj}^2 f_\sigma(\rho_\sigma)\|_p
	+ \eta\|\na\pa_{jj}^2f_\sigma(\rho_\sigma)\|_p,
\end{align*}
and the last expression can be absorbed by the corresponding estimate of
$\Delta\pa_i\rho_\sigma$ from the left-hand side of \eqref{3.aux4}. Then
we deduce from Lemma \ref{lem.regul} that
$\pa_t\pa_i\rho_\sigma$, $\pa_{ijj}^3\rho_\sigma\in L^p(0,T;$ $L^p(\R^d))$ for
all $p>1$ and Lemma \ref{lem.embedd}, applied to $\pa_{ij}^2\rho_\sigma$, yields
$\pa_{ij}^2\rho_\sigma\in C^0([0,T];L^q(\R^d))$ for all $q\ge 2$.

Next, if $1/2<s<1$, we use the second inequality in Lemma \ref{lem.GN1} and
argue similarly as before. This finishes the proof.
\end{proof}

\begin{lemma}\label{lem.regulrho}
Under the assumptions of Proposition \ref{prop.regul}, 
for every $q\geq 2$, there exists a constant
$C=C(q)>0$, depending on $\sigma$, such that 
$$
  \|\rho_\sigma\|_{C^0([0,T];W^{2,1}(\R^d)\cap W^{3,q}(\R^d))} \le C.
$$
\end{lemma}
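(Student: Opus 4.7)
I would prove the lemma in two steps: first extend the $C^0([0,T]; W^{2,q}(\R^d))$ continuity already obtained in Proposition \ref{prop.regul} to $C^0([0,T]; W^{3,q}(\R^d))$ by one additional round of the parabolic bootstrap of that proposition, and then obtain the $C^0([0,T]; W^{2,1}(\R^d))$ integrability by trading the resulting higher-$L^q$ bounds against the spatial decay furnished by the moments of $\rho^0_\sigma$.

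\textbf{Step 1 ($W^{3,q}$ continuity).} Starting from $\rho_\sigma \in L^p(0,T; W^{3,p}(\R^d))$ and $\pa_t\rho_\sigma \in L^p(0,T; W^{1,p}(\R^d))$ for every $1 < p < \infty$, I would differentiate \eqref{1.rho} twice in space so that $w := \pa_{ij}^2 \rho_\sigma$ satisfies
\begin{equation*}
\pa_t w - \sigma \Delta w = \pa_{ij}^2 \diver\bigl(\rho_\sigma \na (-\Delta)^{-s} f_\sigma(\rho_\sigma)\bigr).
\end{equation*}
Expanding the right-hand side and estimating the resulting products of spatial derivatives of $\rho_\sigma$ of order at most three (controlled via $\rho_\sigma \in L^p(0,T; W^{3,p}(\R^d))$ and the $L^\infty_t L^q_x$ bounds on $\rho_\sigma$ and $\na \rho_\sigma$) and of $(-\Delta)^{-s} f_\sigma(\rho_\sigma)$ (whose derivatives gain $2s$ additional orders of regularity from the Riesz operator) in $L^p(0,T; L^p(\R^d))$ by means of the fractional Gagliardo--Nirenberg inequalities (Lemmas \ref{lem.GN1}--\ref{lem.GN2}) and the Hardy--Littlewood--Sobolev-type inequality, exactly as in Step 4 of the proof of Proposition \ref{prop.regul}, the parabolic regularity result Lemma \ref{lem.regul} yields $w \in L^p(0,T; W^{2,p}(\R^d))$ together with $\pa_t w \in L^p(0,T; L^p(\R^d))$. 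A final application of the trace/embedding Lemma \ref{lem.embedd} promotes this to $w \in C^0([0,T]; W^{1,q}(\R^d))$ for every $q \ge 2$, which is the desired $\rho_\sigma \in C^0([0,T]; W^{3,q}(\R^d))$.

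\textbf{Step 2 ($W^{2,1}$ integrability).} The $L^1$ bound on $\rho_\sigma$ itself is mass conservation, and its $C^0_t L^1_x$ continuity follows from nonnegativity, norm preservation, and the $C^0_t L^2_x$ continuity. For $\DD^k \rho_\sigma$ with $k = 1, 2$, since none of the mass, entropy, or energy estimates deliver a direct $L^1$ bound, I would manufacture one from a weighted $L^2$ bound. Exploiting that $\rho^0_\sigma$ is compactly supported by definition \eqref{1.kappa} and hence admits moments of every order, I would propagate, for some $\alpha > d/2$, the weighted estimate
\begin{equation*}
\sup_{0<t<T} \int_{\R^d} (1+|x|^2)^{\alpha} |\DD^k \rho_\sigma(t)|^2 dx \le C, \qquad k = 1, 2,
\end{equation*}
by testing the equations satisfied by $\pa_i \rho_\sigma$ and $\pa_{ij}^2 \rho_\sigma$ against $(1+|x|^2)^{\alpha} \pa_i \rho_\sigma$ and $(1+|x|^2)^{\alpha} \pa_{ij}^2 \rho_\sigma$, respectively, controlling the nonlocal drift via Hardy--Littlewood--Sobolev and fractional Gagliardo--Nirenberg, exactly as in the proof of Lemma \ref{lem.mom}, and closing the inequality with Gronwall. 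The Cauchy--Schwarz splitting
\begin{equation*}
\int_{\R^d} |\DD^k \rho_\sigma| dx \le \biggl( \int_{\R^d} (1+|x|^2)^{\alpha} |\DD^k \rho_\sigma|^2 dx \biggr)^{1/2} \biggl( \int_{\R^d} (1+|x|^2)^{-\alpha} dx \biggr)^{1/2},
\end{equation*}
together with $\alpha > d/2$ (which makes the second factor finite), then gives the uniform-in-time $L^1$ bound on $\DD^k \rho_\sigma$, and continuity in $L^1$ follows from the $C^0_t L^2_x$ continuity of Step 1 together with the tightness at infinity provided by the weighted bound.

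\textbf{Main obstacle.} The delicate part is Step 2: the available a priori estimates (mass, entropy, energy) control $\DD^k \rho_\sigma$ only in $L^p$ for $p \ge 2$, never in $L^1$, so the $L^1$ integrability has to be produced indirectly by trading the higher-$L^q$ bounds of Step 1 for spatial decay, with the decay propagated from the compact support (and hence arbitrary moments) of $\rho^0_\sigma$ through weighted energy estimates. The technical point requiring care is the closure of these weighted $L^2$ estimates in the presence of the $\sigma$-dependent nonlocal drift $\na (-\Delta)^{-s} f_\sigma(\rho_\sigma)$, which I expect to require the same Hardy--Littlewood--Sobolev and fractional Gagliardo--Nirenberg manipulations that were used in Step 2 of the proof of Lemma \ref{lem.mom}.
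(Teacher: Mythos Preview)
Your proposal is correct and follows essentially the paper's strategy: Step~1 is identical (differentiate once more and re-run the parabolic bootstrap of Proposition~\ref{prop.regul}), and Step~2 rests on the same key idea, namely to exploit the compact support of $\rho_\sigma^0$ to propagate weighted Sobolev bounds and then trade the weight for $L^1$ integrability via H\"older/Cauchy--Schwarz.

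The only notable difference is in the implementation of Step~2. You propose to test the equation satisfied by $\DD^k\rho_\sigma$ against $(1+|x|^2)^\alpha\DD^k\rho_\sigma$ and close a weighted $L^2$ energy estimate directly. The paper instead introduces $v_n:=(1+|x|^2)^{n/2}\rho_\sigma$, writes down the parabolic equation satisfied by $v_n$ (which is \eqref{1.rho} plus lower-order terms involving only $v_0,\ldots,v_{n-1}$), and then applies the full $L^\infty_tW^{2,p}_x$ parabolic regularity machinery of Proposition~\ref{prop.regul} to $v_n$ by induction over $n$; this yields $\|\gamma_n\DD^2\rho_\sigma\|_p\le C$ for $n>d$, and H\"older with $\gamma_n^{-1}\in L^{p/(p-1)}(\R^d)$ gives the $L^1$ bound. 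The paper's route has the advantage that it recycles the already-established parabolic estimates wholesale rather than redoing the nonlocal drift estimates by hand inside a weighted energy inequality, and the induction on $n$ cleanly organizes the interaction between the growing weight and the nonlocal term; your route is more self-contained but would require you to check carefully that the weighted $L^2$ estimate actually closes when the weight $(1+|x|^2)^\alpha$ with $\alpha>d/2$ meets the nonlocal drift (this is where the paper's inductive structure does real work).
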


The embedding $W^{3,q}(\R^d)\hookrightarrow W^{2,\infty}(\R^d)$
for $q>d$ yields a bound for $\rho_{\sigma}$ in 
$C^0([0,T];$ $W^{2,\infty}(\R^d))$.

\begin{proof}
We first prove the bound in $C^0([0,T];W^{3,q}(\R^d))$.
By differentiating \eqref{1.rho} twice in space, estimating similarly
as in Step 4 of the previous proof, and using the regularity results
of Proposition \ref{prop.regul}, we can show that $\rho_\sigma$ is bounded
in $L^\infty(0,T;W^{3,q}(\R^d))$ for any $q\ge 2$.  

It remains to show the $C^0([0,T];W^{2,1}(\R^d))$ bound for $\rho_\sigma$. 
In view of mass conservation and Gagliardo--Nirenberg--Sobolev's inequality, 
it suffices to show a bound for 
$\DD^2 \rho_{\sigma}$ in $L^\infty(0,T;L^{1}(\R^d))$. 
To this end, we define the weights
$\gamma_n=(1+|x|^2)^{n/2}$ for $n\ge 0$ 
and test equation \eqref{1.rho} for $\rho_\sigma$
with $v_n:=\gamma_n\rho_{\sigma}$. Then
\begin{align*}
  & \pa_t v_n - \sigma\Delta v_n 
	= \diver\big(v_n\na\K*f_\sigma(\rho_{\sigma})\big)
	+ I_n, \quad v_n(0)=\gamma_n\rho^{0}_\sigma\quad\mbox{in }\R^d, \\
	& \mbox{where }I_n = -2\sigma\na\gamma_n\cdot\na\rho_{\sigma} 
	- \sigma\rho_{\sigma}\Delta\gamma_n
	- \rho_{\sigma}\na\gamma_n
	\cdot\na\K*f_\sigma(\rho_{\sigma}).
\end{align*}
Arguing as in Step 4 of the previous proof, we can find a bound in 
$L^\infty(0,T;W^{2,p}(\R^d))$ for $v_n$.
Indeed, we can proceed by induction over $n$, since the additional terms in $I_n$
can be controlled by Sobolev norms of $v_0,\ldots,v_{n-1}$. The definition of
$\rho^0_\sigma$ implies that $\gamma_n\rho^0_\sigma$, 
$\gamma_n\nabla\rho^0_\sigma\in L^\infty(\R^d)\cap
L^1(\R^d)$ for every $n\ge 0$. Then choosing $n>d$ yields, for $0\le t\le T$, that
$$
  \|\gamma_n\DD^2\rho_{\sigma}\|_p
	\le\|\DD^2(\gamma_n\rho_{\sigma})\|_p
	+ 2\|\na\gamma_n\cdot\na\rho_{\sigma}\|_p
	+ \|\rho_{\sigma}\DD^2\gamma_n\|_p \le C(T).
$$
We conclude from $\gamma_n^{-1}\in L^\infty(\R^d)\cap L^1(\R^d)$ that
$$
  \|\DD^2\rho_{\sigma}\|_1
	\le \|\gamma_n^{-1}\|_{p/(p-1)}\|\gamma_n\DD^2\rho_{\sigma}\|_p
	\le C(T).
$$
This proves the desired bound.
\end{proof}


\subsection{Existence of solutions to \eqref{1.rho}}

We show that the regularized equation \eqref{1.rho} possesses a unique 
strong solution $\rho_\sigma$. 

{\em Step 1: Existence for an approximated system.}
Let $T>0$ arbitrary, define the spaces
\begin{align*}
  X_T &:= L^2(0,T; H^1(\R^d))\cap H^1(0,T; H^{-1}(\R^d))
	\hookrightarrow Y_T := C^0([0,T]; L^2(\R^d)), \\
  Y_{T,R} &:= \{u\in Y_T:\|u-\rho_\sigma^0\|_{L^\infty(0,T; L^2(\R^d))}\leq R\},
\end{align*}
and consider the mapping $S : v\in Y_T\mapsto u\in Y_T$,
\begin{equation}
\label{ex.1}
\begin{aligned}
  & \pa_t u - \sigma\Delta u = \diver(u\na \K_s^{(\delta)}*f_\sigma^{(\eta)}(v))  
	\quad\mbox{in }\R^d\times (0,T), \\
  & u(0) = \rho_\sigma^0\quad\mbox{in }\R^d ,
\end{aligned}
\end{equation}
where $\K_s^{(\delta)} : \R^d\to \R_+$ is a regularized version of $\K_s$, 
defined by
\begin{align}\nonumber 
  \K_s^{(\delta)} &= \widetilde\K_{s/2}^{(\delta)}*\widetilde\K_{s/2}^{(\delta)}, \\
  \nonumber 
  \widetilde\K_{s/2}^{(\delta)}(x) &= c_{d,-s/2}
  \begin{cases}
    \delta^{s-d} + (s-d)\delta^{s-d-1}(|x|-\delta) & \mbox{for }|x|<\delta, \\
    |x|^{s-d} & \mbox{for }\delta\leq |x|\leq \delta^{-1}, \\
    [\delta^{d-s} + (s-d)\delta^{d+1-s}(|x|-\delta^{-1})]_+ & 
		\mbox{for }|x|>\delta^{-1},
  \end{cases}
\end{align}
and $f_\sigma^{(\eta)}$ is given by
\begin{equation*}
  f_\sigma^{(\eta)}(\rho) = \int_0^{|\rho|} f_\sigma'(u)\min(1,u\eta^{-1})du 
	+ \frac{\eta}{2} \rho^2 ,\quad\rho\in\R.
\end{equation*}
The regularization with parameter $\eta$ is needed for the entropy estimates.

We derive some estimates for $f_\sigma^{(\eta)}$.
First, we have $0\leq f_\sigma^{(\eta)}(\rho)\leq C_\eta\rho^2$ 
for $\rho\in\R$, since
\begin{align*}
  f_\sigma^{(\eta)}(\rho) &
	\leq \left(\eta + \eta^{-1}\max_{[0,\eta]}f_\sigma'\right)\frac{\rho^2}{2}
	\quad\mbox{for }|\rho|\le\eta, \\
  f_\sigma^{(\eta)}(\rho) &\leq f_\sigma(|\rho|) + \frac{\eta}{2} \rho^2
  \leq \left(\|f_\sigma\|_\infty\eta^{-2} + \frac{\eta}{2}\right)\rho^2
  \quad\mbox{for }|\rho|>\eta.
\end{align*}
Furthermore, 
$$
  |\DD f_\sigma^{(\eta)}(\rho)| = \bigg| \frac{\rho}{|\rho|}f_\sigma'(|\rho|)
	\min(1,|\rho|\eta^{-1}) + \eta\rho\bigg|
	\leq (\eta + \eta^{-1}\|f_\sigma'\|_\infty)|\rho|,
$$
which implies that $|\DD f_\sigma^{(\eta)}(\rho)|\leq C_\eta|\rho|$ 
for $\rho\in\R$. This shows that there exists $C(\eta)>0$ such that
for any $\rho_1$, $\rho_2\in\R$,
$$
  |f_\sigma^{(\eta)}(\rho_1)-f_\sigma^{(\eta)}(\rho_2)|
	\leq C(\eta)(|\rho_1|+|\rho_2|)|\rho_1-\rho_2|.
$$
It follows that $f_\sigma^{(\eta)}(v)\in L^\infty(0,T; L^1(\R^d))$ for $v\in Y_T$. 

Since $\na\K_s^{(\delta)}\in L^\infty(\R^d)$, a standard argument shows that 
\eqref{ex.1} has a unique solution $u\in X_T\hookrightarrow Y_T$. Therefore, 
the mapping $S$ is well-defined. Additionally, the nonnegativity of $u$ follows 
immediately after by testing \eqref{ex.1} with $\min(0,u)$.

We show now that $S$ is a contraction on $Y_{T,R}$ for sufficiently small $T>0$.
We start with a preparation.
By testing \eqref{ex.1} with $u$ and taking into account the $L^\infty$ bound 
for $\na\K_s^{(\delta)}$, we deduce from Young's inequality for products 
and convolutions that
\begin{equation*}
  \int_{\R^d}u(t)^2 dx + \frac{\sigma}{2}\int_0^t\int_{\R^d}|\na u|^2 dx d\tau
  \leq \int_{\R^d}|\rho_\sigma^0|^2 dx 
	+ C(\delta,\eta,\sigma)\int_0^t\|u\|_2^2\|v\|_2^4 d\tau,
\end{equation*}
since $\|f_\sigma^{(\eta)}(v)\|_{1}\leq C_\eta \|v \|_{2}^2$ for $v \in Y_T$.
Then, if $v\in Y_{T,R}$, we infer from Gronwall's lemma that
\begin{equation}\label{ex.est.2}
  \int_{\R^d}u(t)^2 dx + \sigma\int_0^t\int_{\R^d}|\nabla u|^2 dx d\tau 
	\leq e^{C(\sigma,\delta,\eta)R^4 t}\int_{\R^d}|\rho_\sigma^0|^2 dx
	\quad\mbox{for } 0\leq t\leq T.
\end{equation}

Let $v_i\in Y_{T,R}$ and set $u_i = S(v_i)$, $i=1,2$. We compute
\begin{align*}
  \|u_1 & \nabla \K_s^{(\delta)} * f_\sigma^{(\eta)}(v_1) 
	- u_2\nabla \K_s^{(\delta)} * f_\sigma^{(\eta)}(v_2)\|_2\\
  &\leq \|(u_1-u_2)\nabla \K_s^{(\delta)} * f_\sigma^{(\eta)}(v_1)\|_2
  + \|u_2\nabla\K_s^{(\delta)} * (f_\sigma^{(\eta)}(v_1) 
	- f_\sigma^{(\eta)}(v_2))\|_2 \\
  &\leq \|u_1-u_2\|_2\|\nabla \K_s^{(\delta)} * f_\sigma^{(\eta)}(v_1)\|_\infty 
	+ \|u_2\|_2\|\nabla\K_s^{(\delta)} * (f_\sigma^{(\eta)}(v_1) 
	- f_\sigma^{(\eta)}(v_2))\|_\infty \\
  &\leq \|u_1-u_2\|_2\|\nabla \K_s^{(\delta)}\|_\infty 
	\|f_\sigma^{(\eta)}(v_1)\|_1 + \|u_2\|_2\|\nabla\K_s^{(\delta)}\|_\infty 
	\|f_\sigma^{(\eta)}(v_1) - f_\sigma^{(\eta)}(v_2)\|_1 \\
  &\leq C(\delta,\eta)\big(\|u_1-u_2\|_2\|v_1\|_2^2 + \|u_2\|_2(\|v_1\|_2 
	+ \|v_2\|_2)\|v_1-v_2\|_2\big).
\end{align*}
Therefore, using \eqref{ex.est.2}, for $v_1, v_2\in Y_{T,R}$,
\begin{equation}\label{ex.est.3}
  \|u_1 \nabla \K_s^{(\delta)} * f_\sigma^{(\eta)}(v_1) - u_2\nabla \K_s^{(\delta)} 
	* f_\sigma^{(\eta)}(v_2)\|_2 
	\leq C(\delta,\eta,R,T)(\|u_1-u_2\|_2 + \|v_1-v_2\|_2 ).
\end{equation}

Next, we write \eqref{ex.1} for $(u_i,v_i)$ in place of $(u,v)$, $i=1,2$, take the 
difference between the two equations, and test the resulting equation with $u_1-u_2$:
\begin{align*}
  \frac{1}{2}\| & u_1-u_2\|_2^2(t) 
	+ \sigma\int_0^t\int_{\R^d}|\nabla (u_1-u_2)|^2 dx d\tau \\
  &= -\int_0^t\int_{\R^d}\nabla(u_1-u_2)\cdot (u_1 \nabla \K_s^{(\delta)}
	* f_\sigma^{(\eta)}(v_1) - u_2\nabla \K_s^{(\delta)} * f_\sigma^{(\eta)}(v_2))
	dx d\tau \\
  &\leq  \frac{\sigma}{2}\int_0^t\int_{\R^d}|\nabla (u_1-u_2)|^2 dx d\tau
  + \frac{1}{2\sigma}\int_0^t\|u_1 \nabla \K_s^{(\delta)} 
	* f_\sigma^{(\eta)}(v_1) - u_2\nabla \K_s^{(\delta)} 
	* f_\sigma^{(\eta)}(v_2)\|_2^2 d\tau.
\end{align*}
It follows from \eqref{ex.est.3} that
$$
  \| u_1-u_2\|_2^2(t) + \sigma\int_0^t\int_{\R^d}|\nabla (u_1-u_2)|^2 dx d\tau\\
  \leq C(\delta,\eta,R,T,\sigma)\int_0^t(\|u_1-u_2\|_2^2+\|v_1-v_2\|_2^2)d\tau,
$$
and we conclude from Gronwall's lemma that
$$
  \| u_1-u_2\|_2^2(t)\leq e^{C(\delta,\eta,R,T,\sigma)t}\int_0^T\|v_1-v_2\|_2^2 
	d\tau\quad\mbox{for } 0\leq t\leq T.
$$
This inequality implies that $S$ is a contraction in $Y_{T,R}$, 
provided that $T$ is sufficiently small.
Therefore, by Banach's theorem, $S$ admits a unique fixed point 
$u\in Y_{T,R}\subset Y_T$ for $T>0$ sufficiently small.

It remains to show that the local solution can be extended to a global one.
To this end, we note that the function $u\in X_T$ satisfies \eqref{ex.1} with $v=u$:
\begin{equation}\label{ex.2}
  \begin{aligned}
  & \pa_t u - \sigma\Delta u = \diver(u\nabla \K_s^{(\delta)} 
	* f_\sigma^{(\eta)}(u))\quad\mbox{in }\R^d\times (0,T),\\
  & u(\cdot,0) = \rho_\sigma^0\quad\mbox{in }\R^d.
  \end{aligned}
\end{equation}
Then, defining the truncated entropy density
$$
  h^{(\eta)}(\rho) = \int_0^\rho\int_0^u \DD f_\sigma^{(\eta)}(v)v^{-1}dv du,
	\quad\rho\geq 0,
$$
and testing \eqref{ex.2} with $\DD h^{(\eta)}(u)$ yields,
in view of the definition of $\K_s^{(\delta)}$, that
  \begin{align}\label{ex.ei}
  \int_{\R^d}h^{(\eta)}(u(t))dx 
  &+ \sigma\int_0^t\int_{\R^d}\DD f_\sigma^{(\eta)}(u)u^{-1}|\nabla u|^2 dx d\tau \\   
	\nonumber
  &+ \int_0^t\int_{\R^d}|\nabla\widetilde\K_{s/2}^{(\delta)}
	* f_\sigma^{(\eta)}(u)|^2 dx d\tau
  = \int_{\R^d}h^{(\eta)}(\rho_\sigma^0)dx
\end{align}
for $0\le t\le T$.
This inequality and the definitions of $f_\sigma^{(\eta)}$ and $h^{(\eta)}$ 
yield a $(\delta,T)$-uniform bound for $u$ in $L^2(0,T; H^1(\R^d))$, which in turn 
(together with \eqref{ex.2}) implies a $(\delta,T)$-uniform bound for $u$ in $X_T$, 
and a fortiori in $Y_T$. This means that the solution $u$ can be prolonged to 
the whole time interval $[0,\infty)$ and exists for all times.

Finally, we point out that, since $\nabla\K_s^{(\delta)}\in L^2(\R^d)$, then 
$\nabla\K_s^{(\delta)} * f_\sigma^{(\eta)}(u)\in L^\infty(0,T; L^2(\R^d))$ and so
$u\nabla\K_s^{(\delta)} * f_\sigma^{(\eta)}(u)\in L^\infty(0,T; L^1(\R^d))$. 
This fact yields the conservation of mass for $u$, i.e.\ $\int_{\R^d}u(t)dx 
= \int_{\R^d}\rho_\sigma^0 dx$ for $t>0$. Indeed, it is sufficient to test 
\eqref{ex.2} with a cutoff $\psi_R \in C^1_0(\R^d)$ satisfying $\psi_R(x)=1$ 
for $|x|<R$, $\psi_R(x)=0$ for $|x|>2R$, $|\nabla\psi_R(x)|\leq C R^{-1}$ 
for $x\in\R^d$, and then to take the limit $R\to\infty$.

{\em Step 2: Limit $\delta\to 0$.} Let $u^{(\delta)}$ be the solution to \eqref{ex.2}.
An adaption of the proof of \cite[Lemma 1]{CGZ20} shows that the embedding
$H^1(\R^d)\cap L^1(\R^d;(1+|x|^2)^{\kappa/2})\hookrightarrow L^2(\R^d)$ is compact.
Thus, because of the $\delta$-uniform bounds for $u^{(\delta)}$, 
the Aubin--Lions Lemma 
implies that (up to a subsequence) $u^{(\delta)}\to u$ strongly in 
$L^2(0,T; L^2(\R^d))$ for every $T>0$. 
We wish now to study the convergence of the nonlinear and nonlocal terms in 
\eqref{ex.2}--\eqref{ex.ei} as $\delta\to 0$.

It follows from \eqref{ex.ei} that (up to a subsequence)
\begin{equation}\label{ex.delta.0}
  \nabla\widetilde\K_{s/2}^{(\delta)} * f_\sigma^{(\eta)}(u^{(\delta)})
	\rightharpoonup U\quad \mbox{weakly in }L^2(\R^d\times (0,T))
	\quad\mbox{as }\delta\to 0.
\end{equation}
In order to identify the limit $U$, we first notice that, by construction, 
$0\leq \widetilde\K_{s/2}^{(\delta)}\nearrow\K_{s/2}$ a.e.~in $\R^d$. 
Furthermore, the Hardy--Littlewood--Sobolev inequality, the bound for 
$f_\sigma^{(\eta)}$, and then the Gagliardo-Nirenberg-Sobolev inequality yield that
\begin{align*}
  \|\K_{s/2} * f_\sigma^{(\eta)}(u)\|_{(d+2)/(d-s)}
	&\leq C\|f_\sigma^{(\eta)}(u)\|_{(d+2)/(d+2s/d)}
	\leq C(\eta)\|u\|_{(2d+4)/(d+2s/d)}^2 \\
  &\leq C(\eta)\|u\|_2^{2(s+2)/(d+2)}\|\nabla u\|_2^{2(d-s)/(d+2)}.
\end{align*}
Therefore, since $u\in L^\infty(0,T; L^2(\R^d))\cap L^2(0,T; H^1(\R^d))$,
$$
  \int_0^T\|\K_{s/2} * f_\sigma^{(\eta)}(u)\|_{(d+2)/(d-s)}^{(d+2)/(d-s)}dt
  \leq C(\eta)\|u\|_{L^\infty(0,T; L^2(\R^d))}^{2(s+2)/(d-s)}
  \int_0^T\|\nabla u\|_2^{2}dt
	\leq C(\eta,T),
$$
meaning that $\K_{s/2} * f_\sigma^{(\eta)}(u)\in L^{(d+2)/(d-s)}(\R^d\times (0,T))$. 
Taking into account that $f_\sigma^{(\eta)}(u)\geq 0$, we deduce from 
monotone convergence that
\begin{equation}\label{ex.delta.a}
  \widetilde\K_{s/2}^{(\delta)} * f_\sigma^{(\eta)}(u)
	\to \K_{s/2} * f_\sigma^{(\eta)}(u)\quad\mbox{strongly in }
	L^{(d+2)/(d-s)}(\R^d\times (0,T)).
\end{equation}

Furthermore, arguing as before and using the estimates for 
$\DD f^{(\eta)}_\sigma$ leads to
\begin{align*}
  \|\widetilde\K_{s/2}^{(\delta)} & * (f_\sigma^{(\eta)}(u^{(\delta)}) 
	- f_\sigma^{(\eta)}(u))\|_{(d+2)/(d-s)}
  \leq \|\widetilde\K_{s/2} * |f_\sigma^{(\eta)}(u^{(\delta)}) 
	- f_\sigma^{(\eta)}(u)|\|_{(d+2)/(d-s)} \\
  &\leq C\|f_\sigma^{(\eta)}(u^{(\delta)}) - f_\sigma^{(\eta)}(u)
	\|_{(d+2)/(d+2s/d)} \\
  &\leq C(\eta)\big\||u| + |u^{(\delta)}|\big\|_{(2d+4)/(d+2s/d)}
  \| u - u^{(\delta)} \|_{(2d+4)/(d+2s/d)} \\
  &\leq C(\eta)\big(\|u\|_2^{(s+2)/(d+2)}\|\nabla u\|_2^{(d-s)/(d+2)} 
	+ \|u^{(\delta)}\|_2^{(s+2)/(d+2)}\|\nabla u^{(\delta)}\|_2^{(d-s)/(d+2)}\big) \\
  &\phantom{xx}{}\times
	\|u-u^{(\delta)}\|_2^{(s+2)/(d+2)}\|\nabla (u-u^{(\delta)})\|_2^{(d-s)/(d+2)}.
\end{align*}
Since $u^{(\delta)}$ is bounded in $L^\infty(0,T; L^2(\R^d))\cap L^2(0,T; H^1(\R^d))$, 
it follows that (up to a subsequence)
$\widetilde\K_{s/2}^{(\delta)} * (f_\sigma^{(\eta)}(u^{(\delta)}) 
- f_\sigma^{(\eta)}(u))$ converges weakly to some limit in 
$L^{(d+2)/(d-s)}(\R^d\times (0,T))$.
However, H\"older's inequality and the fact that $u^{(\delta)}\to u$ strongly 
in $L^p(0,T; L^2(\R^d))$ for every $2\leq p < \infty$,
which follows from
$$
  \int_0^T\|u^{(\delta)}-u\|_2^p dt \le \sup_{0<t<T}\|(u^{(\delta)}-u)(t)\|_2^{p-2}
	\int_0^T\|u^{(\delta)}-u\|_2^2 dt \to 0\quad\mbox{as }\delta\to 0,
$$
imply that
$$
  \widetilde\K_{s/2}^{(\delta)} * (f_\sigma^{(\eta)}(u^{(\delta)}) 
	- f_\sigma^{(\eta)}(u))\to 0\quad\mbox{strongly in }
	L^p(0,T; L^{(d+2)/(d-s)}(\R^d)), \ p<\frac{d+2}{d-s}.
$$
We conclude that 
\begin{equation}\label{ex.delta.b}
  \widetilde\K_{s/2}^{(\delta)} * (f_\sigma^{(\eta)}(u^{(\delta)}) 
	- f_\sigma^{(\eta)}(u))\rightharpoonup 0\quad\mbox{weakly in }
	L^{(d+2)/(d-s)}(\R^d\times (0,T)).
\end{equation}

We deduce from \eqref{ex.delta.a}--\eqref{ex.delta.b} that
\begin{align*}
  \widetilde\K_{s/2}^{(\delta)} &* f_\sigma^{(\eta)}(u^{(\delta)}) 
	- \K_{s/2} * f_\sigma^{(\eta)}(u) \\
  &= \big(\widetilde\K_{s/2}^{(\delta)} * f_\sigma^{(\eta)}(u) 
	- \K_{s/2} * f_\sigma^{(\eta)}(u)\big) 
	+  \widetilde\K_{s/2}^{(\delta)} * (f_\sigma^{(\eta)}(u^{(\delta)}) 
	- f_\sigma^{(\eta)}(u)) \\
  &\rightharpoonup 0 \quad\mbox{weakly in }L^{(d+2)/(d-s)}(\R^d\times (0,T)),
\end{align*}
which, together with \eqref{ex.delta.0}, implies that
$U=\nabla  \K_{s/2}\ast f_\sigma^{(\eta)}(u)$, that is,
\begin{equation}\label{ex.delta.1}
  \nabla\widetilde\K_{s/2}^{(\delta)} * f_\sigma^{(\eta)}(u^{(\delta)})
	\rightharpoonup \nabla\K_{s/2} * f_\sigma^{(\eta)}(u)
	\quad \mbox{weakly in }L^2(\R^d\times (0,T)).
\end{equation}

Let $\psi\in C^\infty_0(\R^d\times (0,T))$. Because of
$$
  \nabla\K_{s}^{(\delta)} * f_\sigma^{(\eta)}(u^{(\delta)}) 
	= \widetilde\K_{s/2}^{(\delta)}\ast\big(\nabla\widetilde\K_{s/2}^{(\delta)}
	* f_\sigma^{(\eta)}(u^{(\delta)})\big),
$$
we find that
$$
  \int_0^T\int_{\R^d}\psi\cdot\nabla\K_{s}^{(\delta)} 
	* f_\sigma^{(\eta)}(u^{(\delta)}) dx dt 
	= \int_0^T\int_{\R^d}\big(\nabla\widetilde\K_{s/2}^{(\delta)}
	* f_\sigma^{(\eta)}(u^{(\delta)})\big)
	\cdot \big(\widetilde\K_{s/2}^{(\delta)} * \psi\big) dx dt.
$$
Our goal is to show that $\widetilde\K_{s/2}^{(\delta)} * \psi\to\K_{s/2} * \psi$ 
strongly in $L^2(\R^d\times (0,T))$ as $\delta\to 0$. We can assume without loss
of generality that $\psi\geq 0$ a.e.\ in $\R^d\times (0,T)$. Indeed, for general 
functions $\psi$, we may write $\psi=\psi_+ + \psi_-$, where 
$\psi_+=\max\{0,\psi\}$ and $\psi_-=\min\{0,\psi\}$, and
we have $\widetilde\K_{s/2}^{(\delta)}*\psi = \widetilde\K_{s/2}^{(\delta)}*\psi_+ 
- \widetilde\K_{s/2}^{(\delta)}\ast(-\psi_-)$. Once again, since 
$\widetilde\K_{s/2}^{(\delta)}\nearrow \widetilde\K_{s/2}$ a.e.\ in $\R^d$, it 
is sufficient to show that $\K_{s/2}\ast\psi\in L^2(\R^d\times (0,T))$. 
The Hardy--Littlewood--Sobolev inequality (see Appendix \ref{sec.frac}) yields
$$
  \int_0^T\|\K_{s/2}\ast\psi\|_2^2 dt\leq C\int_0^T\|\psi\|_{2d/(d+2s)}^2 dt.
$$
It follows from \eqref{ex.delta.1}, the previous argument, and the fact that 
$\K_s*u = (-\Delta)^{-s}u = \K_{s/2}*\K_{s/2}*u$ that
\begin{align*}
  \int_0^T\int_{\R^d}\psi\cdot\nabla\K_{s}^{(\delta)}
	* f_\sigma^{(\eta)}(u^{(\delta)}) dx dt 
	&\to \int_0^T\int_{\R^d}\big(\nabla\K_{s/2} * f_\sigma^{(\eta)}(u)\big)
	\cdot (\K_{s/2}\ast\psi) dx dt \\
  &= \int_0^T\int_{\R^d}\psi\cdot\nabla\K_{s} * f_\sigma^{(\eta)}(u) dx dt
\end{align*}
for every $\psi\in L^2(0,T; L^{2d/(d+2s)}(\R^d))$, which means that
\begin{equation}\label{ex.delta.2}
  \nabla\K_{s}^{(\delta)} * f_\sigma^{(\eta)}(u^{(\delta)})
	\rightharpoonup\nabla\K_{s} * f_\sigma^{(\eta)}(u)
	\quad\mbox{weakly in }L^2(0,T; L^{2d/(d-2s)}(\R^d)).
\end{equation}
Since $u^{(\delta)}\to u$ strongly in $L^2(0,T; L^2(\R^d))$ and $(u^{(\delta)})$ 
is bounded in $L^\infty(0,T; L^1(\R^d))$ (via mass conservation), it also holds 
that $u^{(\delta)}\to u$ strongly in $L^2(0,T; L^{2d/(d+2s)}(\R^d))$. Therefore, 
the convergence \eqref{ex.delta.2} is sufficient to pass to the limit $\delta\to 0$ 
in \eqref{ex.2}.

{\em Step 3: Limit $\eta\to 0$ and conclusion.}
The limit $\delta\to 0$ in \eqref{ex.2} shows that the limit $u$ solves
\begin{equation}\label{ex.3}
  \begin{aligned}
  & \pa_t u - \sigma\Delta u = \diver(u\nabla \K_s * f_\sigma^{(\eta)}(u))
	\quad\mbox{in }\R^d\times (0,T),\\
  & u(\cdot,0) = \rho_\sigma^0\quad\mbox{in }\R^d.
  \end{aligned}
\end{equation}
Fatou's Lemma and the weakly lower semicontinuity of the $L^2$ norm allow us to 
infer from \eqref{ex.ei} that for $t>0$,
\begin{align}\label{ex.ei.2}
  \int_{\R^d}h^{(\eta)}(u(t))dx 
  &+ \sigma\int_0^t\int_{\R^d}\DD f_\sigma^{(\eta)}(u)u^{-1}|\nabla u|^2 dx d\tau \\   
	\nonumber
  &+ \int_0^t\int_{\R^d}|\nabla \K_{s/2} * f_\sigma^{(\eta)}(u)|^2 dx d\tau
  \leq \int_{\R^d}h^{(\eta)}(\rho_\sigma^0)dx.
\end{align}
At this point, all the bounds for $u$, derived in the previous subsections,
and the moment estimate, contained in Lemma \ref{lem.mom}, can be proved
like in Sections \ref{sec.basic}--\ref{ss.entropy}.
All these estimates are uniform in $\eta$. It is rather straightforward to 
perform the limit $\eta\to 0$ in \eqref{ex.3}--\eqref{ex.ei.2} to obtain 
a weak solution to \eqref{1.rho}.
However, the higher regularity bounds obtained in Section \ref{ss.highreg} imply 
that $u$ is actually a strong solution to \eqref{1.rho}, which in turn 
yields the uniqueness of $u$ as a weak solution to \eqref{1.rho}.
This finishes the proof of Theorem \ref{thm.rho}.


\subsection{Limit $\sigma\to 0$}

We prove that there exists a subsequence of $(\rho_\sigma)$ that converges strongly
in $L^1(\R^d\times(0,T))$ to a weak solution $\rho$ to \eqref{1.eq}. 

The uniform $L^\infty(\R^d\times (0,T))$ bound for $\rho_\sigma$ in 
Lemma \ref{lem.bound} implies that, up to a subsequence, 
$\rho_\sigma\rightharpoonup^*\rho$ weakly* in $L^\infty(\R^d\times(0,T))$
as $\sigma\to 0$. We deduce from the uniform $L^\infty(0,T;L^1(\R^d))$ bound 
\eqref{3.cons} and the moment bound for $\rho_\sigma$ in Lemma \ref{lem.mom}
that  $(\rho_\sigma)$ is equi-integrable. Thus, by the Dunford--Pettis theorem, 
again up to a subsequence, $\rho_\sigma\rightharpoonup\rho$ weakly in 
$L^1(\R^d\times(0,T))$.
It follows from the $L^2(0,T;H^1(\R^d))$ estimate \eqref{L2H1} that 
$\sigma\Delta\rho_\sigma\to 0$ strongly in $L^2(0,T;H^{-1}(\R^d))$.
The estimates in \eqref{u.H1s} and Lemma \ref{lem.bound} show that 
$(\pa_t\rho_\sigma)$ is bounded in $L^2(0,T;H^{-1}(\R^d))$
and consequently, up to a subsequence, $\pa_t\rho_\sigma\rightharpoonup\pa_t\rho$
weakly in $L^2(0,T;H^{-1}(\R^d))$. Therefore, the limit $\sigma\to 0$ in
\eqref{1.rho} leads to
\begin{equation}\label{lim0}
  \pa_t\rho = \diver(\overline{\rho_\sigma\na(-\Delta)^{-s}f_\sigma(\rho_\sigma)})
	\quad\mbox{in }L^2(0,T;H^{-1}(\R^d)),
\end{equation}
where the overline denotes the weak limit of the corresponding sequence.

We need to identify the weak limit on the right-hand side. 
The idea is to use the div-curl lemma \cite[Theorem 10.21]{FeNo09}.
For this, we define the vector fields with $d+1$ components
$$
  U_\sigma := \big(\rho_\sigma,-\rho_\sigma\na(-\Delta)^{-s}f_\sigma(\rho_\sigma)\big),
	\quad V_\sigma := \big(f_\sigma(\rho_\sigma),0,\ldots,0\big).
$$
Let $R>0$ be arbitrary and write $B_R$ for the ball around the origin with radius $R$.
The $L^\infty(\R^d)$ bound \eqref{Linf} for $\rho_\sigma$ and the 
$L^2(0,T;H^{1-s}(\R^d))$ bound \eqref{u.H1s} for $f_\sigma(\rho_\sigma)$ show that
$(U_\sigma)$ is bounded in $L^p(B_R\times(0,T))$ for some $p>1$, while $(V_\sigma)$
is bounded in $L^\infty(B_R\times(0,T))$. Furthermore, by \eqref{u.H1s},
\begin{align*}
  & \operatorname{div}_{(t,x)}U_\sigma = \sigma\Delta\rho_\sigma\to 0
	\quad\mbox{strongly in }L^2(0,T;H^{-1}(B_R))\hookrightarrow H^{-1}(B_R\times(0,T)), \\
	& \|\operatorname{curl}_{(t,x)}V_\sigma\|_{L^2(0,T;H^{-s}(B_R))}
	\le C\|\na f_\sigma(\rho_\sigma)\|_{L^2(0,T;H^{-s}(B_R))} \le C,
\end{align*}
where $\operatorname{curl}_{(t,x)}V_\sigma$ is the antisymmetric part of 
the Jacobian matrix of $V_\sigma$.
Hence, by the compact embedding $H^{-s}(B_R \times (0,T)) \hookrightarrow 
W^{-1,r}(B_R \times (0,T))$ (since $L^2(0,T; H^{-s}(B_R)) \subset 
H^{-s}(B_R \times (0,T))$), the sequence $(\operatorname{curl}_{(t,x)}V_\sigma)$ 
is relatively compact in $W^{-1,r}(B_R\times(0,T))$ for some $r>1$. 
Therefore, we can apply the div-curl
lemma giving $\overline{U_\sigma\cdot V_\sigma}=\overline{U_\sigma}\cdot
\overline{V_\sigma}$ or
$$
  \overline{\rho_\sigma f_\sigma(\rho_\sigma)} = \rho\overline{f_\sigma(\rho_\sigma)}
	\quad\mbox{a.e. in }B_R\times(0,T).
$$

By definition \eqref{1.fsigma} of $f_\sigma(\rho_\sigma)$, it follows for
arbitrary $\rho_\sigma\in [0,L]$ and sufficiently large $L>0$, that
\begin{align*}
  f_\sigma(\rho_\sigma) &= \int_0^{\rho_\sigma} (\Gamma_\sigma 
	* (f' \mathrm{1}_{[0,\infty)}))(u)\widetilde\Xi(\sigma u) du 
  = \int_0^{\rho_\sigma}\int_0^{\infty} \Gamma_\sigma(u-w) f'(w) dw 
	\widetilde\Xi(\sigma u) du \\
  &= \int_0^{\rho_\sigma}\int_0^{\infty} \Gamma_\sigma'(u-w) f(w) dw
	\widetilde\Xi(\sigma u) du
  = \int_0^{\infty} \bigg(\int_0^{\rho_\sigma} \Gamma_\sigma'(u-w) 
	\widetilde\Xi(\sigma u) du\bigg) f(w) dw.
\end{align*}
We use the properties that $(\rho_\sigma)$ is uniformly bounded and
$\widetilde\Xi=1$ in $[-1,1]$. Then, choosing $\sigma>0$ sufficiently small,
\begin{align*}
  f_\sigma(\rho_\sigma) &= \int_0^{\infty} \bigg(\int_0^{\rho_\sigma} 
	\Gamma_\sigma'(u-w) du\bigg) f(w) dw  \\
  &= \int_0^{\infty}\Gamma_\sigma(\rho_\sigma - w)f(w)dw 
	- \int_0^{\infty}\Gamma_\sigma(-w)f(w)dw \\
  &= \int_\R\Gamma_\sigma(\rho_\sigma - w)\tilde{f}(w)dw 
	- \int_\R\Gamma_\sigma(-w)\tilde{f}(w)dw,
\end{align*}
setting $\tilde{f}:=f\mathrm{1}_{[0,\infty)}$. 
Hence, using $f(0)=0$, we find that
$$
  f_\sigma(\rho_\sigma) - f(\rho_\sigma) 
  = \int_{\R}\Gamma_\sigma(u)(\tilde f(u + \rho_\sigma) - \tilde f(\rho_\sigma))du
	- \int_\R\Gamma_\sigma(-w)(\tilde{f}(w) - \tilde{f}(0))dw.
$$
Taking into account the fundamental theorem of calculus for the function 
$\tilde{f}\in C^0\cap W^{1,1}(\R)$, we can estimate as follows:
\begin{align*}
  |f_\sigma(\rho_\sigma) - f(\rho_\sigma)| 
	&\leq \esssup_{u\in \operatorname{supp}(\Gamma_\sigma)\backslash\{0\}}
	\bigg(\frac{|\tilde f(u + \rho_\sigma) - \tilde f(\rho_\sigma)|}{|u|} 
	+ \frac{|\tilde f(u) - \tilde f(0)|}{|u|} \bigg)\int_{\R} \Gamma_\sigma(w)|w| dw \\
  &\leq\bigg(\max_{\xi\in \operatorname{supp}(\Gamma_\sigma)\cap [0,\infty)}
	(f'(\xi + \rho_\sigma) + f'(\xi))\bigg)\int_{\R} \Gamma_\sigma(w)|w| dw.
\end{align*}
Then, since $\Gamma_\sigma(u)=\sigma^{-1}\Gamma_1(\sigma^{-1}u)$,
$\operatorname{supp}(\Gamma_\sigma)\subset B_\sigma(0)$ is compact, 
$f\in C^1([0,\infty))$, and $(\rho_\sigma)$ is uniformly bounded, we conclude that
$$
  |f_\sigma(\rho_\sigma) - f(\rho_\sigma)|\leq C\sigma .
$$ 
This means that $f_\sigma(\rho_\sigma)-f(\rho_\sigma)\to 0$ 
strongly in $L^\infty(B_R\times(0,T))$, and 
it shows that $\overline{\rho_\sigma f(\rho_\sigma)}=\rho\overline{f(\rho_\sigma)}$
a.e.\ in $B_R\times(0,T)$. As $f$ is nondecreasing, we can apply
\cite[Theorem 10.19]{FeNo09} to infer that $\overline{f(\rho_\sigma)}=f(\rho)$
a.e.\ in $B_R\times(0,T)$. Consequently, $\overline{\rho_\sigma f(\rho_\sigma)}
= \rho f(\rho)$. As $u\mapsto uf(u)$ is assumed to be strictly convex, 
we conclude from \cite[Theorem 10.20]{FeNo09} that
$(\rho_\sigma)$ converges a.e.\ in $B_R\times(0,T)$. Since $(\rho_\sigma)$
is bounded in $L^\infty(\R^d\times(0,T))$, it follows that
$\rho_\sigma\to\rho$ strongly in $L^p(B_R\times(0,T))$ for all $p<\infty$.
Using the moment estimate from Lemma \ref{lem.mom}, we infer from
\begin{align*}
  \limsup_{\sigma\to 0}\int_0^T\int_{\R^d}|\rho_\sigma-\rho|dxdt
	&= \limsup_{\sigma\to 0}\int_0^T\int_{\R^d\setminus B_R}|\rho_\sigma-\rho|dxdt \\
	&\le R^{-2d/(d-2s)}\limsup_{\sigma\to 0}\int_0^T\int_{\R^d\setminus B_R}
	\rho_\sigma(t,x)|x|^{2d/(d-2s)}dx \\
	&\le R^{-2d/(d-2s)}C\to 0\quad\mbox{as }R\to\infty
\end{align*}
that $\rho_\sigma\to\rho$ strongly in $L^p(\R^d\times(0,T))$ for all $p<\infty$.
The strong convergences of $\rho_\sigma $ and $f_\sigma(\rho_\sigma)$ in 
$L^p(\R^d\times(0,T))$ for all $p<\infty$ allow us to identify the weak limit in 
\eqref{lim0}, proving the weak formulation \eqref{1.weak}. 

Finally, we deduce from the uniform
$L^2(0,T;H^{-1}(\R^d))$ bound for $\pa_t\rho_\sigma$ and the fact that
$\rho_\sigma\to\rho$ strongly in $L^p(\R^d)$ for any $p<\infty$ that
$\rho(0)=\rho^0$ in the sense of $H^{-1}(\R^d)$. 
Properties (iv) of Theorem \ref{thm.ex} follow from the corresponding expressions
satisfied by $\rho_\sigma$ in the limit $\sigma\to 0$. 

The following lemma is needed in the proof of Theorem \ref{thm.chaos}. 

\begin{corollary}\label{coro.cont}
Under the assumptions of Theorem \ref{thm.ex}, it holds for all $\phi\in
L^\infty(\R^d)$ that, possibly for a subsequence,
$$
  \int_{\R^d}\rho_\sigma\phi dx \to \int_{\R^d}\rho\phi dx
	\quad\mbox{uniformly in }[0,T].
$$
\end{corollary}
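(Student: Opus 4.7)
The plan is to reduce the claim to smooth compactly supported test functions via tail and approximation estimates, and then to close the smooth case by an Arzel\`a--Ascoli argument based on the uniform temporal estimate for $\pa_t\rho_\sigma$. The ingredients already at hand are: the moment bound of Lemma~\ref{lem.mom}, the $L^\infty$ bound of Lemma~\ref{lem.bound}, the uniform $\pa_t\rho_\sigma\in L^2(0,T;H^{-1}(\R^d))$ bound from Theorem~\ref{thm.rho}, and the strong convergence $\rho_\sigma\to\rho$ in $L^p(\R^d\times(0,T))$ for every $p<\infty$ established in the previous subsection.

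First I would handle $\psi\in C^\infty_c(\R^d)$. Cauchy--Schwarz and the duality pairing give, for $0\le\tau_1<\tau_2\le T$,
\[
\Bigl|\int_{\R^d}(\rho_\sigma(\tau_2)-\rho_\sigma(\tau_1))\psi\,dx\Bigr|
\le \|\psi\|_{H^1}\int_{\tau_1}^{\tau_2}\|\pa_t\rho_\sigma\|_{H^{-1}}\,d\tau
\le C\|\psi\|_{H^1}|\tau_2-\tau_1|^{1/2},
\]
so the family $t\mapsto\int_{\R^d}\rho_\sigma(t)\psi\,dx$ is uniformly bounded and $\tfrac12$-H\"older in $t$, uniformly in $\sigma$. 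The strong $L^1(\R^d\times(0,T))$ convergence of $\rho_\sigma$ yields, up to a subsequence, pointwise a.e.\ convergence of these functions to $t\mapsto\int_{\R^d}\rho(t)\psi\,dx$, and Arzel\`a--Ascoli upgrades this to uniform convergence on $[0,T]$.

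For general $\phi\in L^\infty(\R^d)$ I would split $\phi=\phi\mathrm{1}_{B_R}+\phi\mathrm{1}_{\R^d\setminus B_R}$. The moment estimate of Lemma~\ref{lem.mom} controls the tail uniformly in $\sigma$ and $t$ by
\[
\Bigl|\int_{\R^d\setminus B_R}(\rho_\sigma-\rho)\phi\,dx\Bigr|
\le \|\phi\|_\infty R^{-2d/(d-2s)}\sup_{0<t<T}\int_{\R^d}(\rho_\sigma+\rho)|x|^{2d/(d-2s)}\,dx,
\]
which is $\le C\|\phi\|_\infty R^{-2d/(d-2s)}\to 0$ as $R\to\infty$. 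Since $\phi\mathrm{1}_{B_R}\in L^1(\R^d)$, density of $C^\infty_c(\R^d)$ in $L^1$ allows one to pick $\psi\in C^\infty_c(\R^d)$ arbitrarily close to $\phi\mathrm{1}_{B_R}$ in $L^1$; combined with the uniform $L^\infty$ bound on $\rho_\sigma$ and $\rho$, this makes the corresponding approximation error arbitrarily small, again uniformly in $\sigma$ and $t$. Applying the smooth case to $\psi$ closes the argument.

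The only subtlety is that the Arzel\`a--Ascoli step a priori produces one subsequence per test function $\psi$. This is handled by a standard diagonal extraction along a countable subset of $C^\infty_c(\R^d)$ that is dense in $L^1(B_R)$ for every $R$, yielding a single subsequence along which uniform convergence holds simultaneously for every $\psi$ in the dense family, and hence---by the tail and approximation steps---for every $\phi\in L^\infty(\R^d)$.
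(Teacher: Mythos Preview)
Your proposal is correct and follows essentially the same strategy as the paper: equicontinuity from the uniform $L^2(0,T;H^{-1})$ bound on $\pa_t\rho_\sigma$, Arzel\`a--Ascoli for smooth compactly supported test functions, a diagonal extraction over a countable dense family, and finally the moment estimate to handle the tail for general $\phi\in L^\infty(\R^d)$. The only cosmetic difference is in the density step---the paper passes through $H^1(\R^d)$ and then $L^2(\R^d)$ (using the $L^\infty(0,T;L^2)$ bound on $\rho_\sigma$), whereas you approximate $\phi\mathrm{1}_{B_R}$ directly in $L^1$ using the $L^\infty$ bound on $\rho_\sigma$; both routes are equally valid.
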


\begin{proof}
Let $\phi\in C_0^1(\R^d)$ and $0\le t_1<t_2\le T$. The uniform 
$L^2(0,T;H^{-1}(\R^d))$ bound of $\pa_t\rho_\sigma$ implies that
\begin{align*}
  \bigg|\int_{\R^d}&\rho_\sigma(t_2)\phi dx - \int_{\R^d}\rho_\sigma(t_1)\phi dx\bigg|
	= \bigg|\int_{t_1}^{t_2}\langle\pa_t\rho_\sigma,\phi\rangle dt\bigg| \\
	&\le |t_2-t_1|^{1/2}\|\pa_t\rho_\sigma\|_{L^2(0,T;H^{-1}(\R^d))}\|\phi\|_{H^1(\R^d)}
	\le C|t_2-t_1|^{1/2}\|\phi\|_{H^1(\R^d)}.
\end{align*}
Hence, the sequence of functions $t\mapsto\int_{\R^d}\rho_\sigma(t)\phi ds$ is
bounded and equicontinuous in $[0,T]$. By the Ascoli--Arzel\'a theorem, up to
a $\phi$-depending subsequence, $\int_{\R^d}\rho_\sigma\phi dx\to\xi_\phi$
strongly in $C^0([0,T])$ as $\sigma\to 0$. Since $\rho_\sigma\rightharpoonup^*\rho$
weakly* in $L^\infty(0,T;L^\infty(\R^d))$, we can identify the limit,
$\xi_\phi=\int_{\R^d}\rho\phi dx$. 
Since $H^1(\R^d)$ is separable, a Cantor diagonal argument together
with a density argument allows us to find a subsequence (which is not relabeled)
such that for all $\phi\in H^1(\R^d)$,
\begin{equation}\label{3.conv}
  \int_{\R^d}\rho_\sigma\phi dx \to \int_{\R^d}\rho\phi dx
	\quad\mbox{strongly in }C^0([0,T]).
\end{equation}
Since $(\rho_\sigma)$ is bounded in $L^\infty(0,T;L^2(\R^d))$, another
density argument shows that this limit also holds for all $\phi\in L^2(\R^d)$.

Now, let $\phi\in L^\infty(\R^d)$. Using $\phi\mathrm{1}_{\{|x|<R\}}\in L^2(\R^d)$,
it follows from \eqref{3.conv} and the moment estimate for $\rho_\sigma$ that
\begin{align*}
  \limsup_{\sigma\to 0}&\sup_{0<t<T}\bigg|\int_{\R^d}\rho_\sigma(t)\phi dx
	- \int_{\R^d}\rho(t)\phi dx\bigg| \\
	&\le \limsup_{\sigma\to 0}\sup_{0<t<T}\bigg|\int_{\R^d}\rho_\sigma(t)\phi
	\mathrm{1}_{\{|x|>R\}}dx
	- \int_{\R^d}\rho(t)\phi\mathrm{1}_{\{|x|>R\}} dx\bigg| \\
  &\le R^{-2d/(d-2s)}\|\phi\|_\infty\limsup_{\sigma\to 0}\sup_{0<t<T}\int_{\R^d}
	(\rho_\sigma(x,t)+\rho(x,t))|x|^{2d/(d-2s)}dx \\
	&\le C(T)R^{-2d/(d-2s)}\|\phi\|_\infty\to 0\quad\mbox{as }R\to\infty.
\end{align*}
This shows that
$$
  \lim_{\sigma\to 0}\sup_{0<t<T}\bigg|\int_{\R^d}\rho_\sigma(t)\phi dx
	- \int_{\R^d}\rho(t)\phi dx\bigg| = 0,
$$
concluding the proof.
\end{proof}


\section{Analysis of equation \eqref{1.rho3}}\label{sec.rho3}

This section is devoted to the analysis of equation \eqref{1.rho3},
\begin{equation}\label{4.rhobeta}
\begin{aligned}
  & \pa_t\rho_{\sigma,\beta,\zeta} - \sigma\Delta\rho_{\sigma,\beta,\zeta}
	= \diver\big(\rho_{\sigma,\beta,\zeta}\na\K_\zeta*
	f_\sigma(W_\beta*\rho_{\sigma,\beta,\zeta})\big), \quad t>0, \\
	& \rho_{\sigma,\beta,\zeta}(0)=\rho_\sigma^0\quad\mbox{in }\R^d,
\end{aligned}
\end{equation}
where $\K_\zeta=\tildeK*W_\zeta$ and $W_\beta$ is defined in \eqref{1.Wbeta}, 
as well as to an estimate for the
difference $\rho_{\sigma,\beta,\zeta}-\rho_\sigma$, which is needed in the
mean-field analysis. The existence and uniqueness of a strong solution to
\eqref{4.rhobeta} follows from standard parabolic theory, since we regularized
the singular kernel and smoothed the nonlinearity.

\begin{proposition}[Uniform estimates]\label{prop.u} 
Let Hypotheses (H1)--(H3) hold and let $T>0$, $p>d$. Set $\afrak:= \min\{1,d-2s\}$, 
let $\rho_\sigma$ be the strong solution to \eqref{1.rho}, and let 
$\rho_{\sigma,\beta,\zeta}$ be the strong
solution to \eqref{1.rho3}. Then there exist constants $C_1>0$, and $\eps_0>0$,
both depending on $\sigma$, $p$, and $T$, such that if 
$\beta+\zeta^{\afrak}<\eps_0$ then
\begin{align}
  \|\rho_{\sigma,\beta,\zeta}-\rho_\sigma\|_{L^\infty(0,T;W^{2,p}(\R^d))}
	&\le C_1(\beta+\zeta^{\afrak}), \label{4.diff} \\
	\|\rho_{\sigma,\beta,\zeta}\|_{L^\infty(0,T;W^{2,p}(\R^d))} &\le C_1. 
	\label{4.bound} 
\end{align}
Furthermore, for every $q\geq 2$, there exists $C_2=C_2(q)>0$, 
depending on $\sigma$ and $T$, such that
\begin{align}
	\|(\K_\zeta-\K)*\rho_\sigma\|_{L^\infty(0,T;L^\infty(\R^d))} 
	&\le C_2\zeta^{\afrak}, \label{4.diffK} \\
	\|\rho_{\sigma,\beta,\zeta}\|_{L^\infty(0,T;W^{2,1}(\R^d)\cap W^{3,q}}(\R^d)) 
	&\le C_2. \label{4.D2}
\end{align}
\end{proposition}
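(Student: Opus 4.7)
The plan is to establish the four estimates in sequence: first the kernel approximation bound \eqref{4.diffK}, then the difference estimate \eqref{4.diff} via parabolic maximal regularity applied to a perturbed equation, then the bound \eqref{4.bound} by the triangle inequality, and finally the higher-regularity bound \eqref{4.D2} by a weighted argument imitating Lemma \ref{lem.regulrho}.

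For \eqref{4.diffK} I would decompose
$$
\K_\zeta - \K = \bigl(\K(\omega_\zeta - 1)\bigr)*W_\zeta + (\K * W_\zeta - \K).
$$
The second piece is the classical mollification error of the Riesz kernel: using $\K(x) \sim c|x|^{2s-d}$ and $|\na\K(x)|\lesssim|x|^{2s-d-1}$, splitting the convolution integral according to whether the distance to the singularity is $\le\zeta$ or $>\zeta$ gives a pointwise bound of order $\zeta^{d-2s}$ near the origin and of order $\zeta$ away from it. The first piece is supported where $|x|\ge\zeta^{-1}$, so it contributes through the moment estimate of Lemma \ref{lem.mom} applied to $\rho_\sigma$, producing a factor $\zeta^{2d/(d-2s)}$ which is absorbed into $\zeta^{\afrak}$. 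Combining with $\|\rho_\sigma\|_1+\|\rho_\sigma\|_\infty\le C$ yields the claimed $\zeta^{\afrak}$ bound with $\afrak=\min\{1,d-2s\}$. This is the technically most delicate step and I expect it to be the main obstacle.

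For \eqref{4.diff}, set $e:=\rho_{\sigma,\beta,\zeta}-\rho_\sigma$. Subtracting \eqref{1.rho} from \eqref{4.rhobeta} and inserting and subtracting the intermediate terms $\rho_\sigma\na\K_\zeta*f_\sigma(W_\beta*\rho_{\sigma,\beta,\zeta})$ and $\rho_\sigma\na\K_\zeta*f_\sigma(\rho_\sigma)$ gives
$$
\pa_t e - \sigma\Delta e = \diver\bigl(e\, V_{\beta,\zeta} + \rho_\sigma\, R_1 + \rho_\sigma\, R_2\bigr),
$$
where $V_{\beta,\zeta}:=\na\K_\zeta*f_\sigma(W_\beta*\rho_{\sigma,\beta,\zeta})$, the kernel-error term $R_2:=\na(\K_\zeta-\K)*f_\sigma(\rho_\sigma)$ is controlled in $L^\infty_{t,x}$ by \eqref{4.diffK} and $\|f_\sigma'\|_\infty<\infty$, and the mollification-error term $R_1:=\na\K_\zeta*[f_\sigma(W_\beta*\rho_{\sigma,\beta,\zeta})-f_\sigma(\rho_{\sigma,\beta,\zeta})]$ is of order $\beta$ via Lemma \ref{lem.wbeta} and a Lipschitz estimate for $f_\sigma$. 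Applying the maximal parabolic $L^\infty_t W^{2,p}_x$ regularity of Lemma \ref{lem.regul} and using that $V_{\beta,\zeta}$ is bounded in the appropriate space (uniformly in $\beta,\zeta$, by Lemma \ref{lem.hls} and \eqref{u.H1s}), followed by Gronwall's inequality, yields \eqref{4.diff} provided $\eps_0$ is small enough for an $\eta$-absorption step to succeed.

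Estimate \eqref{4.bound} then follows immediately by the triangle inequality from \eqref{4.diff} together with the bound for $\rho_\sigma$ in $L^\infty(0,T;W^{2,p}(\R^d))$ supplied by Proposition \ref{prop.regul} (and the Sobolev embedding $W^{3,p}\hookrightarrow W^{2,p}$). For \eqref{4.D2} I would repeat the weighted-test-function argument of Lemma \ref{lem.regulrho}: test the equation for $\rho_{\sigma,\beta,\zeta}$ against $\gamma_n\rho_{\sigma,\beta,\zeta}$ with $\gamma_n=(1+|x|^2)^{n/2}$, $n>d$, and propagate the resulting bound by induction on $n$, controlling the additional drift commutators by the already-established $W^{2,p}$ bound \eqref{4.bound}. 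Combined with Hölder's inequality using $\gamma_n^{-1}\in L^{p/(p-1)}\cap L^\infty$, this yields the $W^{2,1}$ bound, while further differentiation of \eqref{4.rhobeta} in space, handled exactly as in Step 4 of the proof of Proposition \ref{prop.regul}, delivers the $W^{3,q}$ bound.
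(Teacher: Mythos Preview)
Your argument for \eqref{4.diff} has a genuine circularity that the paper spends most of its effort resolving. You write the equation for $e=\rho_{\sigma,\beta,\zeta}-\rho_\sigma$ with coefficient $V_{\beta,\zeta}=\na\K_\zeta*f_\sigma(W_\beta*\rho_{\sigma,\beta,\zeta})$ and source terms $R_1,R_2$, then assert that $V_{\beta,\zeta}$ is bounded uniformly and that $R_1$ is $O(\beta)$ via Lemma~\ref{lem.wbeta}. Both claims require a priori control of derivatives of $\rho_{\sigma,\beta,\zeta}$: Lemma~\ref{lem.wbeta} gives $\|W_\beta*\rho_{\sigma,\beta,\zeta}-\rho_{\sigma,\beta,\zeta}\|_p\le C\beta\|\na\rho_{\sigma,\beta,\zeta}\|_p$, and when $s\le 1/2$ the divergence of $eV_{\beta,\zeta}$ contains terms such as $e\,\K_\zeta*[f_\sigma'(W_\beta*\rho_{\sigma,\beta,\zeta})W_\beta*\Delta\rho_{\sigma,\beta,\zeta}]$ whose $L^p$ norm involves $\|\DD^2\rho_{\sigma,\beta,\zeta}\|_p$. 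None of these norms are bounded uniformly in $(\beta,\zeta)$ beforehand---that is precisely what \eqref{4.bound} asserts. The reference to \eqref{u.H1s} does not help, since that bound is for $\rho_\sigma$, not $\rho_{\sigma,\beta,\zeta}$. Your decomposition also drops the term $\rho_\sigma\na\K_\zeta*[f_\sigma(\rho_{\sigma,\beta,\zeta})-f_\sigma(\rho_\sigma)]$, which is of order $e$ rather than $\beta$ and must be absorbed, not treated as a source.

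The paper breaks the circularity by a continuity argument: Lemma~\ref{lem.u1} shows that \emph{if} $\sup_{0<t<T}\|e(t)\|_{W^{1,p}}\le\Gamma_p$ for a fixed small $\Gamma_p$, then $\|\DD^2 e\|_{L^p_tL^p_x}\le C(\|e\|_{L^p_tW^{1,p}_x}+\beta+\zeta^\afrak)$, with the crucial point that all terms involving $\|\DD^2 e\|$ on the right carry a factor $\Gamma_p^\mu$ and can be absorbed. Lemma~\ref{lem.u2} then tests the equation with $p|e|^{p-2}e-p\diver(|\na e|^{p-2}\na e)$, defines $T^*=\sup\{t_0:\|e(t)\|_{W^{1,p}}\le\Gamma_p\mbox{ on }[0,t_0]\}$, applies Gronwall on $[0,T^*]$ to get $\|e(t)\|_{W^{1,p}}\le C(\beta+\zeta^\afrak)e^{Ct}\le\Gamma_p/2$ for $\beta+\zeta^\afrak<\eps_0$, and concludes $T^*=T$. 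Only after this unconditional $W^{1,p}$ bound is the equation differentiated once more to reach $L^\infty_tW^{2,p}_x$. Note also that Lemma~\ref{lem.regul} gives $L^p_t$, not $L^\infty_t$, regularity; the $L^\infty_t$ bound comes from this additional step together with Lemma~\ref{lem.embedd}. Your sketches for \eqref{4.diffK}, \eqref{4.bound}, and \eqref{4.D2} are essentially in line with the paper, though for \eqref{4.diffK} the paper uses $\|\rho_\sigma\|_1$ directly rather than the moment bound.
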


The proof is presented in the following subsections.

\subsection{Proof of \eqref{4.diff}}

We introduce the difference $u:=\rho_{\sigma,\beta,\zeta}-\rho_\sigma$, which
satisfies
\begin{align}
  \pa_t u - \sigma\Delta u &= \diver\big[(u+\rho_\sigma)\na\K_\zeta
	*f_\sigma(W_\beta*(u+\rho_\sigma)) - \rho_\sigma\na\K*f_\sigma(\rho_\sigma)\big] 
	\label{4.equ} \\
	&= D[u] + R[\rho_\sigma,u] + S[\rho_\sigma,u]\quad\mbox{in }\R^d,\ t>0, \nonumber
\end{align}
and the initial datum $u(0)=0$ in $\R^d$, where
\begin{align*}
  D[u] &= \diver\big[u\na\K*f_\sigma(W_\beta*u)\big], \\
	R[\rho_\sigma,u] &= \diver \big[u\na\K*\big(f_\sigma(W_\beta*(u+\rho_\sigma))
	- f_\sigma(W_\beta*u)\big) \\
	&\hspace*{-10mm}{}+ \rho_\sigma\na\K* \big(f_\sigma( W_\beta*(u+\rho_\sigma))
	-f_\sigma(W_\beta*\rho_\sigma)\big)
	+ \rho_\sigma\na\K*\big(f_\sigma(W_\beta*\rho_\sigma)-f_\sigma(\rho_\sigma)
	\big)\big], \\
	S[\rho_\sigma,u] &= \diver\big[(u+\rho_\sigma)\na(\K_\zeta-\K)*f_\sigma
	(W_\beta*(u+\rho_\sigma))\big].
\end{align*}

We show first an estimate for $\DD^2 u$ that depends on a lower-order estimate
for $u$.

\begin{lemma}[Conditional estimate for $u$]\label{lem.u1}
For any $p>d$, there exists a number $\Gamma_p\in(0,1)$ such that, if
$\sup_{0<t<T}\|u(t)\|_{W^{1,p}(\R^d)}\le\Gamma_p$ then
$$
  \|\DD^2 u\|_{L^p(0,T;L^p(\R^d))} \le C\big(\|u\|_{L^p(0,T;W^{1,p}(\R^d))}
	+ \beta + \zeta^{\afrak}\big),
$$
recalling that $\afrak = \min\{1,d-2s\}$, and where $C>0$ is independent of $u$, 
$\beta$, and $\zeta$, but may depend on $\sigma$.
\end{lemma}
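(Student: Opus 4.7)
The plan is to read \eqref{4.equ} as a linear inhomogeneous heat equation $\pa_t u - \sigma\Delta u = F$ for $u$ with zero initial datum, where $F:=D[u]+R[\rho_\sigma,u]+S[\rho_\sigma,u]$, and then apply $L^p$ parabolic maximal regularity (Lemma \ref{lem.regul}) to deduce
$$
\|\DD^2 u\|_{L^p(0,T;L^p(\R^d))}\le C(\sigma,p)\,\|F\|_{L^p(0,T;L^p(\R^d))}.
$$
Everything then reduces to an $L^p(L^p)$ bound on $F$. The Sobolev embedding $W^{1,p}(\R^d)\hookrightarrow L^\infty(\R^d)$, which holds because $p>d$, is crucial here: it upgrades the hypothesis $\sup_t\|u(t)\|_{W^{1,p}}\le\Gamma_p$ to the pointwise smallness $\|u\|_{L^\infty(\R^d\times(0,T))}\le C\Gamma_p$.

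For the self-interaction $D[u]=\diver\bigl(u\,\na\K*f_\sigma(W_\beta*u)\bigr)$ I expand the divergence and estimate the two resulting terms by H\"older's inequality, using $\na\K*g=\na(-\Delta)^{-s}g$ and $\Delta\K*g=-(-\Delta)^{1-s}g$. The fractional Gagliardo--Nirenberg inequalities in Appendix \ref{sec.frac} (Lemmas \ref{lem.GN1}--\ref{lem.GN2}) together with the Hardy--Littlewood--Sobolev bounds (Lemmas \ref{lem.hls}--\ref{lem.hlsh}) control the Riesz potentials of $f_\sigma(W_\beta*u)$ in terms of mixed Sobolev norms of $u$ itself, using boundedness of $f_\sigma$ and its derivatives together with Young's convolution inequality to discard the mollifier $W_\beta$. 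The resulting schematic bound has the form
$$
\|D[u]\|_{L^p(L^p)}\le C\|u\|_{L^\infty(L^\infty)}\,\|\DD^2 u\|_{L^p(L^p)}+C\|u\|_{L^p(W^{1,p})},
$$
so that the smallness $\|u\|_{L^\infty}\le C\Gamma_p$ allows the $\DD^2 u$ term to be absorbed into the left-hand side for $\Gamma_p$ small enough.

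For $R[\rho_\sigma,u]$ I use that $f_\sigma$ is Lipschitz: the first two pieces are divergences of products one of whose factors is of the form $f_\sigma(W_\beta*(u+\rho_\sigma))-f_\sigma(W_\beta*\rho_\sigma)$ or $f_\sigma(W_\beta*(u+\rho_\sigma))-f_\sigma(W_\beta*u)$, which together with Young's convolution inequality contribute bounds proportional to $\|u\|_{L^p(W^{1,p})}$ (the remaining factor $\rho_\sigma$ being controlled by the strong bounds of Theorem \ref{thm.rho} and Lemma \ref{lem.regulrho}). The third piece involves $f_\sigma(W_\beta*\rho_\sigma)-f_\sigma(\rho_\sigma)$; together with $\rho_\sigma\in L^\infty(0,T;W^{2,\infty}(\R^d))$ (from Lemma \ref{lem.regulrho} via $W^{3,q}\hookrightarrow W^{2,\infty}$ for $q>d$) and the mollifier error estimate $\|W_\beta*v-v\|_\infty\le C\beta\|\na v\|_\infty$ (Lemma \ref{lem.wbeta}), this contributes the clean factor $C(\sigma)\beta$. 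For $S[\rho_\sigma,u]=\diver\bigl((u+\rho_\sigma)\na(\K_\zeta-\K)*f_\sigma(W_\beta*(u+\rho_\sigma))\bigr)$ an analogous argument, combined with the kernel-error estimate \eqref{4.diffK} for $(\K_\zeta-\K)*\rho_\sigma$, delivers the $C(\sigma)\zeta^{\afrak}$ factor.

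The main obstacle is the self-interaction $D[u]$: because $\na\K$ has the critical scaling $|x|^{2s-d-1}$, which fails to be locally integrable when $s\le 1/2$, placing $D[u]$ in $L^p$ costs essentially one more derivative of $u$, unavoidably generating a $\|\DD^2 u\|_{L^p(L^p)}$ contribution. The conditional hypothesis $\|u\|_{L^\infty(0,T;W^{1,p})}\le\Gamma_p$ is precisely what lets this contribution enter with a small prefactor and be absorbed on the left-hand side; this is the entire reason the estimate is only \emph{conditional} and why the smallness threshold $\Gamma_p$ appears. After this absorption, the remaining contributions are linear in $\|u\|_{L^p(W^{1,p})}$, $\beta$, and $\zeta^{\afrak}$, which yields the stated bound.
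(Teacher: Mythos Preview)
Your approach is the same as the paper's: apply maximal parabolic regularity to \eqref{4.equ}, estimate $D[u]+R[\rho_\sigma,u]+S[\rho_\sigma,u]$ in $L^p(0,T;L^p(\R^d))$, and absorb the $\DD^2 u$ contributions using the smallness hypothesis. The overall structure is right, but one point is understated and one is slightly misattributed.

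First, it is not true that only $D[u]$ generates dangerous $\|\DD^2 u\|_{L^p(L^p)}$ terms. When $0<s\le 1/2$ the expansion of, say, the second piece of $R$ produces $\rho_\sigma\,\K*\bigl[f'_\sigma(W_\beta*(u+\rho_\sigma))\,W_\beta*\Delta u\bigr]$, and similarly $R_1$, $S_1$, $S_2$, $S_3$ each throw off terms carrying $\Delta u$. In the paper these are handled exactly as you do for $D[u]$: a Hardy--Littlewood--Sobolev step pairs $\Delta u$ with a factor like $\|u\|_{d/(2s)}$ or $\|\rho_\sigma\|_{d/(2s)}\|W_\beta*u\|_\infty$, which is controlled by $C\Gamma_p^\mu$ via interpolation with the conserved $L^1$ norm, and the resulting $C\Gamma_p^\mu\|\DD^2 u\|_{L^p(L^p)}$ is absorbed. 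So your claim that $R_1,R_2$ ``contribute bounds proportional to $\|u\|_{L^p(W^{1,p})}$'' needs the qualifier ``plus absorbable $\Gamma_p^\mu\|\DD^2 u\|$ terms''; the mechanism you already identified for $D[u]$ covers them. One of these terms (called $Q_{232}$ in the paper) also requires moving the Laplacian back onto $\K$ to produce $(-\Delta)^{1-s}$ and then invoking the fractional Gagliardo--Nirenberg inequality, rather than a direct HLS bound.

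Second, the $\zeta^{\afrak}$ contribution from $S$ does not come from \eqref{4.diffK} itself (that estimate is for $(\K_\zeta-\K)*\rho_\sigma$ in $L^\infty$, not for the divergence appearing in $S$). The paper isolates the piece $S_4=\diver\bigl(\rho_\sigma\na(\K_\zeta-\K)*f_\sigma(W_\beta*\rho_\sigma)\bigr)$ and writes $(\K_\zeta-\K)*v=\K*(W_\zeta*v-v)-(\K(1-\omega_\zeta))*W_\zeta*v$; the first part gives $O(\zeta)$ via Lemma \ref{lem.wbeta} and the second gives $O(\zeta^{d-2s})$ from the decay of $\K$ outside $B_{\zeta^{-1}}$, whence the exponent $\afrak=\min\{1,d-2s\}$. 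The remaining pieces $S_1,S_2,S_3$ are treated like the $R$ terms. Your reference to \eqref{4.diffK} points to the right idea but not the right estimate.
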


\begin{proof}
Let $\Gamma_p\in(0,1)$ be such that $\sup_{0<t<T}\|u(t)\|_{W^{1,p}(\R^d)}\le\Gamma_p$.
We will find a constraint for $\Gamma_p$ at the end of the proof. The aim is to
derive an estimate for the right-hand side of \eqref{4.equ} in
$L^p(0,T;L^p(\R^d))$. We observe that $\|u(t)\|_1\le
\|\rho_{\sigma,\beta,\zeta}\|_1+\|\rho_\sigma\|_1\le 2\|\rho^0\|_1$
for $t\in[0,T]$. 
In the following, we denote by $C>0$ a generic constant that may depend on 
$\sigma$, without making this explicit. Furthermore, we denote by $\mu$ a generic 
exponent in $(0,1)$, whose value may vary from line to line.

{\em Step 1: Estimate of $D[u]$.} Let $1/2<s<1$. Then, by the 
Hardy--Littlewood--Sobolev-type inequality \eqref{HLS2},
\begin{align*}
  \|D[u]\|_p &\le \|\na u\cdot\na\K*f_\sigma(W_\beta*u)\|_p
	+ \|u\na\K*[f'_\sigma(W_\beta*u)W_\beta*\na u]\|_p \\
	&\le C\|\na u\|_p\|f_\sigma(W_\beta*u)\|_{d/(2s-1)}
	+ C\|u\|_{d/(2s-1)}\|f'_\sigma(W_\beta*u)\|_\infty\|\na u\|_p.
\end{align*}
We use the Young convolution inequality, the Gagliardo--Nirenberg inequality, 
the smoothness of $f_\sigma$, the property
$f_\sigma(0)=0$, and the fact $\| W_\beta \|_{L^1(\R^d)} =1$ 
to estimate the terms on the right-hand side:
\begin{align*}
  \|W_\beta*u\|_\infty &\le \|u\|_\infty 
	\le \|u\|_1^{1-\lambda}\|\na u\|_p^{\lambda}\le C\Gamma_p^{\lambda} \le C, \\
	\|f_\sigma(W_\beta*u)\|_\infty &\le \max_U|f'_\sigma|\,\|W_\beta*u\|_\infty
	\le C, \\
	\|f'_\sigma(W_\beta*u)\|_\infty &\le |f_\sigma'(0)|
	+ \max_U|f''_\sigma|\|W_\beta*u\|_\infty \le C, \\
	\|u\|_{d/(2s-1)} &\le \|u\|_1^{1-\mu}\|u\|_\infty^\mu 
	\le C\|u\|_{W^{1,p}(\R^d)}^\mu
	\le C\Gamma_p^\mu \le C,
\end{align*}
where $U:=[-\|W_\beta*u\|_\infty,\|W_\beta*u\|_\infty]$ and $\lambda>0$, $\mu>0$. 
Therefore, $\|D[u]\|_p\le C\|\na u\|_p$ and 
\begin{equation}\label{4.D}
	\|D[u]\|_{L^p(0,T;L^{p}(\R^d))} \le C\|u\|_{L^p(0,T;W^{1,p}(\R^d))}.
\end{equation}

Next, let $0<s\le 1/2$. Then we write
\begin{align*}
  D[u] &= \na u\cdot\K*[f'_\sigma(W_\beta*u)W_\beta*\na u]
	+ u\K*[f''_\sigma(W_\beta*u)|W_\beta*\na u|^2] \\
	&\phantom{xx}{}+ u\K*[f'_\sigma(W_\beta*u)W_\beta*\Delta u] 
	=: D_1 + D_2 + D_3.
\end{align*}
By the Hardy--Littlewood--Sobolev-type inequality (Lemma \ref{lem.hls}),
$$
  \|D_1\|_p \le C\|\na u\|_{d/(2s)}\|f'_\sigma(W_\beta*u)W_\beta*\na u\|_p
	\le C\|\na u\|_{d/(2s)}\|\na u\|_p.
$$
Next, we apply the Gagliardo--Nirenberg inequality with
$\lambda = (1+1/d-2s/d)/(1+2/d-1/p)$:
$$
  \|\na u\|_{d/(2s)} \le C\|u\|_1^{1-\lambda}\|\DD^2u\|_p^\lambda 
	\le C\|\DD^2u\|_p^\lambda,
$$
which is possible as long as $\lambda\ge 1/2$ or equivalently $d\ge 2s$, which is true. 
Consequently, using $\Gamma_p\le 1$,
$$
  \|D_1\|_p \le C\|\na u\|_p\|\DD^2u\|_p^\lambda
	\le C\Gamma_p^\lambda\|\na u\|_p^{1-\lambda}\|\DD^2u\|_p^\lambda
	\le C(\delta)\|\na u\|_p + \delta\|\DD^2u\|_p,
$$
where $\delta>0$ is arbitrary. It follows from the Hardy-Littlewood-Sobolev-type 
inequality and the Gagliardo--Nirenberg inequality 
$$
  \|\nabla u\|_{2p}^2 \leq C \|\DD^2 u\|_{p}^{d/p} \|\nabla u\|_{p}^{2-d/p} 
  \leq C\Gamma_p\|\DD^2 u\|_{p}^{d/p}\|\nabla u\|_{p}^{1-d/p}
$$ 
that 
$$
  \|D_2\|_p \le C\|u\|_{d/2s} \Gamma_p \| D^2 u\|_{p}^{d/p}\| \nabla u \|_{p}^{1-d/p} 
	\leq C(\delta)\|\na u\|_p + \delta\|\DD^2u\|_p. 
$$
Finally, using similar ideas, we obtain
$$
  \|D_3\|_p \le C\|u\|_{d/(2s)}\|\Delta u\|_p \le C\Gamma_p^\mu\|\DD^2u\|_p.
$$
Summarizing the estimates for $D_1$, $D_2$, and $D_3$ and integrating in time
leads to
\begin{equation}\label{4.DD}
  \|D[u]\|_{L^p(0,T;L^p(\R^d))} \le C\|u\|_{L^p(0,T;W^{1,p}(\R^d))}
	+ C\Gamma_p^\mu\|\DD^2u\|_{L^p(0,T;L^p(\R^d))}.
\end{equation}

{\em Step 2: Estimate of $R[\rho_\sigma,u]$.} We write
$R[\rho_\sigma,u]=R_1+R_2+R_3$ for the three terms in the definition of
$R[\rho_\sigma,u]$ below \eqref{4.equ}. 

{\em Step 2a: Estimate of $R_1$.}
If $s>1/2$, we can argue similarly
as in the derivation of \eqref{4.D}, which gives
$$
  \|R_1\|_{L^p(0,T;L^p(\R^d))} \le C\|u\|_{L^p(0,T;W^{1,p}(\R^d))}.
$$
If $0<s\le 1/2$, we write $R_1=R_{11}+\cdots+R_{16}$, where
\begin{align*}
  R_{11} &= \na u\cdot\K*\big[f_\sigma'(W_\beta*(u+\rho_\sigma))
	W_\beta*\na\rho_\sigma\big], \\
	R_{12} &= u\K*\big[f''_\sigma(W_\beta*(u+\rho_\sigma))(W_\beta*\na\rho_\sigma)
	\cdot(W_\beta*\na(u+\rho_\sigma))\big], \\
  R_{13} &= u\K*\big[f'_\sigma(W_\beta*(u+\rho_\sigma))
	W_\beta*\Delta\rho_\sigma\big], \\
	R_{14} &= \na u\cdot\K*\big[\big(f'_\sigma(W_\beta*(u+\rho_\sigma))
	-f'_\sigma(W_\beta*u)\big)W_\beta*\na u\big], \\
	R_{15} &= u\K*\big[\big(f''_\sigma(W_\beta*(u+\rho_\sigma))
	W_\beta*\na(u+\rho_\sigma) \\
	&\phantom{xx}{}- f''_\sigma(W_\beta*u)(W_\beta*\na u)\big)\cdot
	(W_\beta*\na u)\big], \\
	R_{16} &= u\K*\big[\big(f'_\sigma(W_\beta*(u+\rho_\sigma))
	- f'_\sigma(W_\beta*u)\big)W_\beta*\Delta u\big].
\end{align*}
All terms except the last one can be treated by the Hardy--Littlewood--Sobolev and
Ga\-gliar\-do--Nirenberg inequalities as before. For the last term, we use these
inequalities and the $L^\infty(\R^d)$ bound for $\rho_\sigma$:
\begin{align*}
  \|R_{16}\|_p &\le C\|u\|_{d/(2s)}\big\|\big(f'_\sigma(W_\beta*(u+\rho_\sigma))
	- f'_\sigma(W_\beta*u)\big)W_\beta*\Delta u\big\|_p \\
	&\le C\|u\|_{d/(2s)}\|f''_\sigma\|_\infty\|W_\beta*\rho_\sigma\|_\infty
	\|W_\beta*\Delta u\|_p
	\le C\|u\|_{d/(2s)}\|\Delta u\|_p \le C\Gamma_p^\mu\|\DD^2u\|_p.
\end{align*}
We infer that (possibly with a different $\mu>0$ than before)
$$
  \|R_1\|_{L^p(0,T;L^p(\R^d))} \le C\|u\|_{L^p(0,T;W^{1,p}(\R^d))}
	+ C\Gamma_p^\mu\|\DD^2u\|_{L^p(0,T;L^p(\R^d))}.
$$

{\em Step 2b: Estimate of $R_2$.}
Since $|f'_\sigma|$ is bounded on the interval
$[-\|u\|_\infty-\|\rho_\sigma\|_\infty,\|u\|_\infty+\|\rho_\sigma\|_\infty]$, we obtain
for $s>1/2$,
$$
  \|R_2\|_{L^p(0,T;L^p(\R^d))} \le C\|u\|_{L^p(0,T;W^{1,p}(\R^d))}.
$$
For $0<s\le 1/2$, we write $R_2=R_{21}+\cdots+R_{27}$, where
\begin{align*}
  R_{21} &= \na\rho_\sigma\cdot\K*\big[f'_\sigma(W_\beta*(u+\rho_\sigma))W_\beta
	*\na u\big], \\
	R_{22} &= \rho_\sigma\K*\big[f''_\sigma(W_\beta*(u+\rho_\sigma))
	W_\beta*\na(u+\rho_\sigma)\cdot(W_\beta*\na u)\big], \\
	R_{23} &= \rho_\sigma\K*\big[f'_\sigma(W_\beta*(u+\rho_\sigma))W_\beta*\Delta u\big],
	\\
	R_{24} &= \na\rho_\sigma\cdot\K*\big[\big(f'_\sigma(W_\beta*(u+\rho_\sigma))
	- f'_\sigma(W_\beta*\rho_\sigma)\big)W_\beta*\na\rho_\sigma\big], \\
	R_{25} &= \rho_\sigma\K*\big[f''_\sigma(W_\beta*(u+\rho_\sigma))
	(W_\beta*\na u)\cdot(W_\beta*\na\rho_\sigma)\big], \\
	R_{26} &= \rho_\sigma\K*\big[\big(f''_\sigma(W_\beta*(u+\rho_\sigma))
	- f''_\sigma(W_\beta*\rho_\sigma)\big)|W_\beta*\na\rho_\sigma|^2\big], \\
	R_{27} &= \rho_\sigma\K*\big[\big(f'_\sigma(W_\beta*(u+\rho_\sigma))
	- f'_\sigma(W_\beta*\rho_\sigma)\big)W_\beta*\Delta\rho_\sigma\big].
\end{align*}
Similar estimations as before allow us to treat all terms except the third one:
\begin{align*}
  \|R_{23}\|_p &\le \big\|\rho_\sigma\K*\big[\big(f'_\sigma(W_\beta*(u+\rho_\sigma))
	- f'_\sigma(W_\beta*\rho_\sigma)\big)W_\beta*\Delta u\big]\big\|_p \\
	&\phantom{xx}{}
	+ \|\rho_\sigma\K*[f'_\sigma(W_\beta*\rho_\sigma)W_\beta*\Delta u]\|_p 
	=: Q_{231} + Q_{232}.
\end{align*}
The first term can be estimated similarly as above by 
$Q_{231}\le C\Gamma_p^\mu\|\DD^2u\|_p$, while
\begin{align*}
  Q_{232} &\le \|\rho_\sigma\Delta\K*[f'_\sigma(W_\beta*\rho_\sigma)W_\beta*u]\|_p
	+ \|\rho_\sigma\K*[\Delta f'_\sigma(W_\beta*\rho_\sigma)W_\beta*u]\|_p \\
  &\phantom{xx}{}+ 2\|\rho_\sigma\K*[\na f'_\sigma(W_\beta*\rho_\sigma)\cdot
	(W_\beta*\na u)]\|_p.
\end{align*}
It follows from $-\Delta\K*v=(-\Delta)^{1-s}v$ and the fractional 
Gagliardo--Nirenberg inequality (Lemma \ref{lem.GN1}) that
\begin{align*}
  Q_{232} &\le C\|u\|_{W^{1,p}(\R^d)} + \|\rho_\sigma(-\Delta)^{1-s}
	[f'_\sigma(W_\beta*\rho_\sigma)W_\beta*u]\|_p \\
	&\le C\|u\|_{W^{1,p}(\R^d)} + C\|\rho_\sigma\|_\infty
	\|f'_\sigma(W_\beta*\rho_\sigma)W_\beta*u\|_p^{s}
	\|\DD^2[f'_\sigma(W_\beta*\rho_\sigma)W_\beta*u]\|_p^{1-s} \\
	&\le C\|u\|_{W^{1,p}(\R^d)} 
	+ C\|u\|_p^{s}\big(\|u\|_p^{1-s} + \|\na u\|_p^{1-s} + \|\DD^2u\|_p^{1-s}\big) \\
	&\le C\|u\|_{W^{1,p}(\R^d)} + C\Gamma_p\|\DD^2u\|_p.
\end{align*}
This shows that $\|R_{23}\|_p\le C\|u\|_{W^{1,p}(\R^d)} + C\Gamma_p^\mu\|\DD^2u\|_p$, 
and we conclude that
$$
  \|R_2\|_{L^p(0,T;L^p(\R^d))} \le C\|u\|_{L^p(0,T;W^{1,p}(\R^d))} 
	+ C\Gamma_p^\mu\|\DD^2u\|_{L^p(0,T;L^p(\R^d))}.
$$

{\em Step 2c: Estimate of $R_3$.}
We write $R_3=R_{31}+\cdots+R_{37}$, where
\begin{align*}
  R_{31} &= \na\rho_\sigma\cdot\K*\big[\big(f'_\sigma(W_\beta*\rho_\sigma)
	 - f'_\sigma(\rho_\sigma)\big)W_\beta*\na\rho_\sigma\big], \\
	R_{32} &= \rho_\sigma\K*\big[\big(f''_\sigma(W_\beta*\rho_\sigma)
	- f''_\sigma(\rho_\sigma)\big)|W_\beta*\na\rho_\sigma|^2\big], \\
	R_{33} &= \rho_\sigma\K*\big[f''_\sigma(\rho_\sigma)(W_\beta*\na\rho_\sigma
	- \na\rho_\sigma)\cdot(W_\beta*\na\rho_\sigma)\big], \\
	R_{34} &= \na\rho_\sigma\cdot\K*\big[f'_\sigma(\rho_\sigma)(W_\beta*\na\rho_\sigma
	- \na\rho_\sigma)\big], \\
	R_{35} &= \rho_\sigma\K*\big[f''_\sigma(\rho_\sigma)\na\rho_\sigma\cdot
	(W_\beta*\na\rho_\sigma	- \na\rho_\sigma)\big], \\
	R_{36} &= \rho_\sigma\K*\big[f'_\sigma(\rho_\sigma)
	(W_\beta*\Delta\rho_\sigma	- \Delta\rho_\sigma)\big] \\
	R_{37} &= \rho_\sigma\K*\big[(f'_\sigma(W_\beta \ast \rho_\sigma)
	- f'_\sigma(\rho_\sigma))W_\beta*\Delta\rho_\sigma\big] 
\end{align*}
We start with the estimate of $R_{31}$. We use the 
Hardy--Littlewood--Sobolev inequality (Lemma \ref{lem.hls}) and 
Lemma \ref{lem.wbeta} to estimate $W_\beta*\rho_\sigma-\rho_\sigma$:
\begin{align*}
  R_{31} &\le C\|\na\rho_\sigma\|_{d/s}
	\|f'_\sigma(W_\beta*\rho_\sigma)-f'_\sigma(\rho_\sigma)\|_p
	\|W_\beta*\na\rho_\sigma\|_{d/s} \\
	&\le C\|\na\rho_\sigma\|_{d/s}^2\max_{[0,2\|\rho_\sigma\|_\infty]}|f''_\sigma|\,
	\|W_\beta*\rho_\sigma-\rho_\sigma\|_p \le C(\sigma)\beta,
\end{align*}
also taking into account the $L^\infty(0,T;L^q(\R^d))$ bound for $\na\rho_\sigma$; 
see Proposition \ref{prop.regul}. With this regularity, we can estimate
all other terms except $R_{34}$ and $R_{36}$. Since they have similar structures, 
we only treat $R_{34}$. This term is delicate since the factor 
$f_\sigma'(\rho_\sigma)$ cannot be bounded in $L^q(\R^d)$
for any $q<\infty$. Therefore, one might obtain via Hardy--Littlewood--Sobolev's
inequality factors like $\|\na\rho_\sigma\|_{q_1}$ and $\|\DD^2\rho_\sigma\|_{q_2}$ 
with either $q_1<2$ or $q_2<2$. However, for such factors, an $L^\infty$ bound in 
time is currently lacking (Proposition \ref{prop.regul} provides
such a bound only for $q\ge 2$). 
Our idea is to add and subtract the term $f'_\sigma(0)$ since
$$
  |f'_\sigma(\rho_\sigma)-f'_\sigma(0)|
	\le \rho_\sigma\max_{[0,\|\rho_\sigma\|_\infty]}|f_\sigma''| \le C\rho_\sigma
$$
can be controlled. This leads to
\begin{align*}
  \|R_{34}\|_p &\le \|\na\rho_\sigma\cdot\K*[(f'_\sigma(\rho_\sigma)-f'_\sigma(0))
	(W_\beta*\na\rho_\beta-\na\rho_\beta)\|_p \\
	&\phantom{xx}{}+ \|f'_\sigma(0)\na\rho_\sigma\cdot
	\K*(W_\beta*\na\rho_\sigma-\na\rho_\sigma)\|_p \\
	&\le C\beta + |f'_\rho(0)|\|\na\rho_\sigma\cdot
	\K*(W_\beta*\na\rho_\sigma-\na\rho_\sigma)\|_p =: C\beta + Q_{341},
\end{align*}
as the first term can be estimated in a standard way.
For the estimate of $Q_{341}$, we need to distinguish two cases.

If $1/2<s\le 1$, we infer from the Hardy--Littlewoord--Sobolev-type
inequality \eqref{HLS2} that
$$
  Q_{341} \le C\|\na\rho_\sigma\|_{d/(2s-1)}\|W_\beta*\rho_\sigma-\rho_\sigma\|_p
	\le C\|\na\rho_\sigma\|_{d/(2s-1)}\|\na\rho_\sigma\|_p\beta \le C\beta.
$$
Next, let $0<s\le 1/2$. 
Then we apply the Hardy--Littlewoord--Sobolev-type inequality \eqref{HLS1},
the standard Gagliardo--Nirenberg inequality for some $\lambda>0$, and
Lemma \ref{lem.wbeta}:
$$
  Q_{341} \le C\|\na\rho_\sigma\|_{d/(2s)}\|W_\beta*\na\rho_\sigma-\na\rho_\sigma\|_p
	\le C\|\rho_\sigma\|_1^{1-\lambda}\|\DD^2\rho_\sigma\|_p^\lambda
	(\beta\|\DD^2\rho_\sigma\|_p) \le C\beta.
$$
We conclude that $\|R_{34}\|_p\le C\beta$ and eventually 
$$
  \|R_3\|_{L^p(0,T;L^p(\R^d))}\le C\beta.
$$
Summarizing the estimates for $R_1$, $R_2$, and $R_3$ finishes this step:
\begin{equation}\label{4.R}
  \|R[\rho_\sigma,u]\|_{L^p(0,T;L^p(\R^d))}\le C\|u\|_{L^p(0,T;W^{1,p}(\R^d))}
	+ C\beta + C\Gamma_p^\mu\|\DD^2u\|_{L^p(0,T;W^{1,p}(\R^d))}.
\end{equation}

{\em Step 3: Estimate of $S[\rho_\sigma,u]$.}
We formulate this term as $S[\rho_\sigma,u]=S_1+\cdots+S_4$, where
\begin{align*}
  S_1 &= \diver\big[u\na(\K_\zeta-\K)*\big(f_\sigma(W_\beta*(u+\rho_\sigma))
	- f_\sigma(W_\beta*\rho_\sigma)\big)\big], \\
	S_2 &= \diver\big(u\na(\K_\zeta-\K)*f_\sigma(W_\beta*\rho_\sigma)\big), \\
	S_3 &= \diver\big[\rho_\sigma\na(\K_\zeta-\K)*\big(f_\sigma(W_\beta*(u+\rho_\sigma))
	- f_\sigma(W_\beta*\rho_\sigma)\big)\big], \\
	S_4 &= \diver\big(\rho_\sigma\na(\K_\zeta-\K)*f_\sigma(W_\beta*\rho_\sigma)\big).
\end{align*}
The terms $S_1$, $S_2$, and $S_3$ can be treated as the terms in $R[\rho_\sigma,u]$,
since they have the same structure and the techniques used to estimate integrals
involving $\K$ can be applied to those involving $\K_\zeta$. This leads to
(for some $\mu>0$)
\begin{equation}\label{4.S123}
  \|S_1+S_2+S_3\|_{L^p(0,T;L^p(\R^d))} \le C\|u\|_{L^p(0,T;W^{1,p}(\R^d))}
	+ C\Gamma_p^\mu\|\DD^2u\|_{L^p(0,T;L^p(\R^d))}.
\end{equation}

It remains to estimate $S_4$. We write $S_4=S_{41}+S_{42}+S_{43}$, where
\begin{align*}
  S_{41} &= \na\rho_\sigma\cdot(\K_\zeta-\K)*\big[f'_\sigma(W_\beta*\rho_\sigma)
	W_\beta*\na\rho_\sigma\big], \\
	S_{42} &= \rho_\sigma(\K_\zeta-\K)*\big[f''_\sigma(W_\beta*\rho_\sigma)
	|W_\beta*\na\rho_\sigma|^2\big], \\
	S_{43} &= \rho_\sigma(\K_\zeta-\K)*\big[f'_\sigma(W_\beta*\rho_\sigma)
	W_\beta*\Delta\rho_\sigma\big].
\end{align*}
Observe that, because of the definition of 
$\K_\zeta = \tildeK \ast W_\zeta$ with $\tildeK = \K \omega_\zeta$ 
(defined in \eqref{1.TildeK}), we have $(\K_\zeta-\K)*v=\K*(W_\zeta*v-v) 
- (\K(1-\omega_\zeta))\ast W_\zeta\ast v$ for every function $v$ 
for which the convolution is defined, and therefore, by the 
Hardy--Littlewood--Sobolev-type inequality \eqref{HLS1}, 
Young's convolution inequality, and Lemma \ref{lem.wbeta},
\begin{align*}
  \|\rho_\sigma(\K_\zeta-\K)*v\|_p 
	&\le C\|\rho_\sigma\|_{d/(2s)}\|W_\zeta*v-v\|_p
  + C\|\rho_\sigma\|_{p}\|(\K (1-\omega_\zeta))\ast v\|_\infty \\
  &\le C\|\rho_\sigma\|_{d/(2s)}\|\na v\|_p\zeta 
	+ C\|\rho_\sigma\|_p\|\K\mathrm{1}_{\R^d\backslash B(0,\zeta^{-1})}
	\|_{\infty}\|v\|_{1} \\
  &\le C\|\rho_\sigma\|_{d/(2s)}\|\na v\|_p\zeta 
  + C\zeta^{d-2s}\|\rho_\sigma\|_p\|v\|_{1},
\end{align*}
Given the regularity properties of $\rho_\sigma$ (see Lemma \ref{lem.regulrho}) 
and the assumptions on $f_\sigma$, it follows that
\begin{equation}\label{4.S4}
  \|S_4\|_{L^p(0,T;L^p(\R^d))} \le C\zeta^{\min\{1,d-2s\}}.
\end{equation}

We conclude from \eqref{4.S123} and \eqref{4.S4} that
\begin{equation}\label{4.S}
  \|S[\rho_\sigma,u]\|_{L^p(0,T;L^p(\R^d))}
	\le C\|u\|_{L^p(0,T;W^{1,p}(\R^d))} + C\zeta^{\afrak}
	+ C\Gamma_p^\mu\|\DD^2u\|_{L^p(0,T;L^p(\R^d))},
\end{equation}
where $\afrak := \min \{1,d-2s\}$.

{\em Step 4: End of the proof.} Summarizing \eqref{4.DD}, \eqref{4.R}, and \eqref{4.S},
we infer that the right-hand side of \eqref{4.equ} can be bounded (for some $\mu>0$) by
\begin{align*}
  \|D[u]&+R[\rho_\sigma,u]+S[\rho_\sigma,u]\|_{L^p(0,T;L^p(\R^d))} \\
	&\le C\|u\|_{L^p(0,T;W^{1,p}(\R^d))} + C(\beta+\zeta^{\afrak}) 
	+ C\Gamma_p^\mu\|\DD^2u\|_{L^p(0,T;L^p(\R^d))}.
\end{align*}
By parabolic regularity \eqref{a.D2u},
$$
  \|\DD^2u\|_{L^p(0,T;L^p(\R^d))} \le C\|u\|_{L^p(0,T;W^{1,p}(\R^d))}
	+ C(\beta+\zeta^{\afrak}) + C\Gamma_p^\mu\|\DD^2u\|_{L^p(0,T;L^p(\R^d))}.
$$
Choosing $\Gamma_p>0$ sufficiently small finishes the proof.
\end{proof}

It remains to estimate the ${L^p(0,T;W^{1,p}(\R^d))}$ norm of $u$. This is done
in the following lemma.

\begin{lemma}[Unconditional estimate for $u$]\label{lem.u2}
For any $p>d$, there exist constants $C>0$, and $\eps_0>0$, both
depending on $\sigma$, $p$, and $T$, such that for 
$\beta+\zeta^{\afrak}<\eps_0$,
$$
  \|u\|_{L^\infty(0,T;W^{1,p}(\R^d))} \le C(\beta+\zeta^{\afrak}).
$$ 
recalling that $\afrak:= \min\{1,d-2s\}$.
\end{lemma}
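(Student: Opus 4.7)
The plan is a continuity (bootstrap) argument that upgrades the conditional estimate of Lemma \ref{lem.u1} into an unconditional one, provided $\beta+\zeta^{\afrak}$ is small enough that the a priori $W^{1,p}$ bound $\Gamma_p$ is never saturated. Set $\Phi(t):=\|u\|_{L^\infty(0,t;W^{1,p}(\R^d))}$. Since $u(0)=0$, and both $\rho_{\sigma,\beta,\zeta}$ and $\rho_\sigma$ lie in $C^0([0,T];W^{2,p}(\R^d))$ by Proposition \ref{prop.regul}, Lemma \ref{lem.regulrho}, and standard parabolic theory applied to \eqref{4.rhobeta}, the difference $u$ belongs to $C^0([0,T];W^{1,p}(\R^d))$, so $\Phi$ is nondecreasing and continuous with $\Phi(0)=0$. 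With $\Gamma_p\in(0,1)$ as in Lemma \ref{lem.u1}, I would then define
$$T^*:=\sup\{t\in[0,T]:\Phi(t)\le\Gamma_p\}\in(0,T].$$

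On $[0,T^*]$ Lemma \ref{lem.u1} applies, and for every $t\le T^*$ it yields
$$\|\DD^2 u\|_{L^p(0,t;L^p(\R^d))}\le C\big(\|u\|_{L^p(0,t;W^{1,p}(\R^d))}+\beta+\zeta^{\afrak}\big).$$
Re-reading Step 4 of that proof, the divergence-form right-hand side $D[u]+R[\rho_\sigma,u]+S[\rho_\sigma,u]$ of \eqref{4.equ} obeys the same $L^p(0,t;L^p)$ bound once the term $C\Gamma_p^\mu\|\DD^2 u\|_{L^p(L^p)}$ is absorbed to the left. Reading off $\pa_t u=\sigma\Delta u+D[u]+R[\rho_\sigma,u]+S[\rho_\sigma,u]$ from \eqref{4.equ} I would therefore obtain
$$\|\pa_t u\|_{L^p(0,t;L^p(\R^d))}+\|u\|_{L^p(0,t;W^{2,p}(\R^d))}\le C\big(\|u\|_{L^p(0,t;W^{1,p}(\R^d))}+\beta+\zeta^{\afrak}\big).$$
Because $u(0)=0$ and $p>d\ge 2$, the standard trace/embedding
$$L^p(0,t;W^{2,p}(\R^d))\cap W^{1,p}(0,t;L^p(\R^d))\hookrightarrow C^0([0,t];W^{1,p}(\R^d))$$
has a constant that can be taken independent of $t\in[0,T]$ (extend $u$ by zero past $t=0$), and feeding this into the previous bound produces
$$\Phi(t)^p\le C\int_0^t\Phi(s)^p\,ds+C\big(\beta^p+\zeta^{p\afrak}\big),\quad t\in[0,T^*].$$
Gronwall's inequality applied to $\Phi^p$ then gives $\Phi(t)\le C_1(\beta+\zeta^{\afrak})$ on $[0,T^*]$, with $C_1$ depending only on $\sigma$, $p$, and $T$.

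To conclude, I would fix $\eps_0>0$ so small that $C_1\eps_0\le\Gamma_p/2$. Under the hypothesis $\beta+\zeta^{\afrak}<\eps_0$, the previous step yields $\Phi(T^*)\le\Gamma_p/2$, which strictly improves the defining constraint of $T^*$. By continuity of $\Phi$ this rules out $T^*<T$, so $T^*=T$ and the Gronwall estimate produces the claimed bound on all of $[0,T]$. The main technical obstacle I expect is the bookkeeping needed to ensure all constants are independent of $T^*$: specifically, verifying that the parabolic embedding constant does not degenerate in $t$ (the initial condition $u(0)=0$ is essential here) and that the absorption behind \eqref{4.DD}--\eqref{4.S} transfers uniformly across subintervals $[0,t]\subset[0,T]$. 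Once these points are checked, the continuity-bootstrap machinery closes the estimate and Proposition \ref{prop.u} follows by combining the resulting $L^\infty(0,T;W^{1,p})$ bound with Lemma \ref{lem.u1}, which then upgrades the control to $L^\infty(0,T;W^{2,p})$.
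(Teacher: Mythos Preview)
Your proposal is correct and shares with the paper the same continuity/bootstrap architecture (define $T^*$, derive a Gronwall inequality for $\Phi(t)^p$ on $[0,T^*]$, then show $\Phi(T^*)\le\Gamma_p/2$ so that $T^*=T$), but the way you reach the Gronwall inequality is genuinely different. The paper does \emph{not} invoke Lemma~\ref{lem.u1} as a black box and then pass through the trace embedding $L^p(0,t;W^{2,p})\cap W^{1,p}(0,t;L^p)\hookrightarrow C^0([0,t];W^{1,p})$. Instead it tests \eqref{4.equ} directly with $p|u|^{p-2}u-p\,\diver(|\na u|^{p-2}\na u)$, integrates by parts to produce the dissipation terms $\sigma\int|u|^{p-2}|\na u|^2$ and $\sigma\int|\na u|^{p-2}|\DD^2u|^2$, and then bounds the right-hand side $Q[u]$ by H\"older together with the pointwise-in-time estimates for $D[u]+R[\rho_\sigma,u]+S[\rho_\sigma,u]$ already obtained in the proof of Lemma~\ref{lem.u1}; the $\DD^2u$ contribution is absorbed by the dissipation rather than by parabolic maximal regularity.

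What each approach buys: the paper's testing argument is entirely elementary and self-contained, with all constants manifestly independent of $t\in[0,T^*]$, so no bookkeeping about trace-embedding constants is needed. Your route is more modular---it reuses Lemma~\ref{lem.u1} and Lemma~\ref{lem.regul} wholesale---but the price is exactly the technical point you flag: one must check that the embedding constant does not degenerate as $t\downarrow 0$. Your extension-by-zero justification (using $u(0)=0$) is the right idea; alternatively, you can obtain the embedding with $t$-independent constant directly from \eqref{a.aux} in the paper's Lemma~\ref{lem.embedd}, applied separately to $u$ and to $\na u$ (noting $\|\pa_t\na u\|_{W^{-1,p}}\le\|\pa_t u\|_p$), which already gives $\|u(t)\|_{W^{1,p}}^p\le C(\|\pa_t u\|_{L^p(0,t;L^p)}^p+\|u\|_{L^p(0,t;W^{2,p})}^p)$ with an absolute constant.
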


\begin{proof}
The idea is to test \eqref{4.equ} with $p|u|^{p-2}u-p\diver(|\na u|^{p-2}\na u)$.
Integration by parts and some elementary computations lead to
\begin{align*}
  \int_{\R^d} & p\diver(|\nabla u|^{p-2}\nabla u)\Delta u dx
  = -p\sum_{i,j}\int_{\R^d} |\nabla u|^{p-2}\pa_i u \pa_i\pa_{jj}^2 u dx \\
	&= p\sum_{i,j}\int_{\R^d}\pa_j(|\nabla u|^{p-2}\pa_i u)\pa_{ij}^2 u dx \\
  &= p\int_{\R^d} |\nabla u|^{p-2} |D^2 u|^2 dx 
  + \frac{p}{2}\sum_{j}\int_{\R^d}\pa_j(|\nabla u|^{p-2})\pa_{j} (|\nabla u|^2)dx \\
  &= p\int_{\R^d} |\nabla u|^{p-2} |D^2 u|^2 dx 
  + \sum_{j}\int_{\R^d}\frac{4}{p}(p-2)\big(\pa_j (|\nabla u|^{p/2})\big)^2 dx.
\end{align*} 
Consequently, we have
\begin{align}
  p\|u&(t)\|_{W^{1,p}(\R^d)}^p 
	+ \sigma p(p-1)\int_0^t\int_{\R^d}|u|^{p-2}|\na u|^2 dxds \label{4.Qu} \\
	&\phantom{xx}{}{}+ \sigma\int_0^t\int_{\R^d}\big(p|\na u|^{p-2}|D^2 u|^2
	+ 4(p-2)p^{-1}\big|\na(|\na u|^{p/2})\big|^2\big)dxds \nonumber \\
	&= p\int_0^t\int_{\R^d}\big(|u|^{p-2}u - \diver(|\na u|^{p-2}\na u)\big)
	\big(D[u]+R[\rho_\sigma,u]+S[\rho_\sigma,u]\big)dxds \nonumber \\
	&=: Q[u]. \nonumber
\end{align}

We infer from Lemmas \ref{lem.embedd} and \ref{lem.regul} that 
$u\in C^0([0,T];W^{1,p}(\R^d))$. Therefore,
since $u(0)=0$, it holds that $\|u(t)\|_{W^{1,p}(\R^d)}\le\Gamma_p$ for all
$t\in[0,T^*]$ and $T^* := \sup\{ t_0\in (0,T): \|u(t)\|_{W^{1,p}(\R^d)}\leq\Gamma_p$
for $0\leq t\leq t_0\}.$
Let $t\in[0,T^*]$. We have shown in the proof of the previous lemma that
$$
  \|D[u]+R[\rho_\sigma,u]+S[\rho_\sigma,u]\|_{L^p(0,t;L^p(\R^d))}
	\le C\|u\|_{L^p(0,t;W^{1,p}(\R^d))} + C(\beta+\zeta^{\afrak}).
$$
Hence, we can estimate the right-hand side $Q[u]$ of \eqref{4.Qu} as follows:
\begin{align*}
  Q[u] &\le C\int_0^t\int_{\R^d}\big(|u|^{p-1} + |\na u|^{p-2}|\DD^2u|\big)
	\big|D[u]+R[\rho_\sigma,u]+S[\rho_\sigma,u]\big|dx \\
	&\le C\big(\|u\|_{L^p(0,t;L^p(\R^d))}^{p-1} + \|\na u\|_{L^p(0,t;L^p(\R^d))}^{p/2-1}
	\||\na u|^{p/2-1}|\DD^2u|\|_{L^2(0,t;L^2(\R^d))}\big) \\
	&\phantom{xx}{}\times\big(\|u\|_{L^p(0,t;W^{1,p}(\R^d))} + \beta 
	+ \zeta^{\afrak}\big) \\
	&\le C(\delta,p,t)\big(\|u\|_{L^p(0,t;W^{1,p}(\R^d))}^p 
	+ (\beta+\zeta^{\afrak})^p\big)
	+ \delta\||\na u|^{p/2-1}|\DD^2u|\|_{L^2(0,t;L^2(\R^d))}^2,
\end{align*}
where $\delta>0$. Choosing $\delta$ sufficiently small, the last term is absorbed
by the corresponding expression on the left-hand side of \eqref{4.Qu}, and
we infer from \eqref{4.Qu} that for $0\le t\le T^*$,
$$
  \|u(t)\|_{W^{1,p}(\R^d)}^p \le C(p,t)\int_0^t\|u\|_{W^{1,p}(\R^d)}^p ds
	+ C(p,t)(\beta+\zeta^{\afrak})^p.
$$
We assume without loss of generality that $C(p,t)$ is nondecreasing in $t$.
Then Gronwall's lemma implies that for $0\le t\le T^*$,
$$
  \|u(t)\|_{W^{1,p}(\R^d)}^p \le C(p,T)(\beta+\zeta^{\afrak})^p
	\int_0^t e^{C(p,T)(t-s)}ds
	\le (\beta+\zeta^{\afrak}) e^{C(p,T)t}.
$$
Choosing $\eps_0=\frac12\Gamma_p\exp(-C(p,T)T/p) < 1$, we find that
$\|u(t)\|_{W^{1,p}(\R^d)}\le\Gamma_p/2$ for $\beta+\zeta^{\afrak}<\eps_0$ 
and $0\le t\le T^*$.
By definition of $T^*$, it follows that $T^*=T$. In particular,
$\|u(t)\|_{W^{1,p}(\R^d)}\le C(\beta+\zeta^{\afrak})$ for $0<t<T$, 
which finishes the proof.
\end{proof}


\subsection{Proof of \eqref{4.diff} and \eqref{4.bound}.}

Combining Lemmas \ref{lem.u1} and \ref{lem.u2} leads to
\begin{equation}\label{estimate_u}
  \|u\|_{L^p(0,T;W^{2,p}(\R^d))}\le C(\sigma,p,T)(\beta+\zeta^{\afrak}), 
	\quad\text{where }\afrak=\min\{1,d-2s\},
\end{equation}
as long as $\beta+\zeta^{\afrak}<\eps_0$ and $p>d$. Next, we differentiate 
\eqref{4.equ} with respect to $x_i$ (writing $\pa_i$ for $\pa/\pa x_i$):
$$
  \pa_t(\pa_i u) - \sigma\Delta(\pa_i u)
	= \pa_i\big(D[u]+R[\rho_\sigma,u]+S[\rho_\sigma,u]\big), \quad
	\pa_i u(0)=0\quad\mbox{in }\R^d.
$$
Taking into account estimate \eqref{estimate_u} and arguing as in the
proof of Lemma \ref{lem.u1}, we can show that for $\delta>0$,
$$
  \|\pa_i(D[u]+R[\rho_\sigma,u]+S[\rho_\sigma,u])\|_{L^p(0,T;L^p(\R^d))}
	\le C(p,\sigma,\delta)(\beta+\zeta^{\afrak}) 
	+ \delta\|\DD^3 u\|_{L^p(0,T;L^p(\R^d))}.
$$ 
We infer from parabolic regularity (Lemma \ref{lem.regul}) for sufficiently 
small $\delta>0$ that
$$
  \|\pa_t\DD u\|_{L^p(0,T;L^p(\R^d))} + \|\DD^3u\|_{L^p(0,T;L^p(\R^d))}
	\le C(p,\sigma)(\beta+\zeta^{\afrak}).
$$
Then Lemma \ref{lem.embedd}, applied to $\DD u$, leads to
\eqref{4.diff}, which with Proposition \ref{prop.regul} implies \eqref{4.bound}. 


\subsection{Proof of \eqref{4.diffK}.}\label{sec.diffK}

Let $x\in\R^d$. We use the definitions of $\K_\zeta$ and $W_\zeta$ to find that
\begin{align*}
  |(\K_\zeta&-\K)*\rho_\sigma(x)| 
	= \bigg|\int_{\R^d} W_\zeta(x-y)\big((\K*\rho_\sigma)(x)
	- ((\K \omega_\zeta)*\rho_\sigma)(y)\big)dy \bigg| \\
	&\le \int_{\R^d}W_\zeta(x-y)|x-y|
	\frac{|(\K*\rho_\sigma)(x)-(\K*\rho_\sigma)(y)|}{|x-y|}dy 
	+ \|(\K(1-\omega_\zeta))*\rho_\sigma\|_\infty \\
	&\le \|\na\K*\rho_\sigma\|_\infty\int_{\R^d}W_\zeta(z)|z|dz
	+\|\K\mathrm{1}_{\R^d\backslash B(0,\zeta^{-1})}\|_{\infty}\|\rho_\sigma\|_{1} \\
	&\le \zeta\|\na\K*\rho_\sigma\|_\infty\int_{\R^d}W_1(y)|y|dy
	+\zeta^{d-2s}\|\rho_\sigma\|_{1}.
\end{align*}
Let $\phi\in C_0^\infty(\R^d)$ be such that $\operatorname{supp}(\phi)\subset B_2$
and $\phi=1$ in $B_1$. Then (since we can assume without loss of generality that 
$\zeta <1$), by arguing like in the derivation of \eqref{4.S4}, we obtain
$$
  |(\K_\zeta-\K)*\rho_\sigma(x)| 
	\le C\zeta^{\min\{1,d-2s\}}\big(\|\na(\K\phi)*\rho_\sigma\|_\infty
	+ \|\na(\K(1-\phi))*\rho_\sigma\|_\infty + \|\rho_\sigma\|_{1}\big),
$$
A computation shows that for $p>\max\{d/(2s),2\}$,
\begin{align*}
  \|\na(\K\phi)*\rho_\sigma\|_\infty 
	&= \|(\K\phi)*\na\rho_\sigma\|_\infty\le \|\K\phi\|_{p/(p-1)}\|\na\rho_\sigma\|_p
	\le C\|\na\rho_\sigma\|_p, \\
	\|\na(\K(1-\phi))*\rho_\sigma\|_\infty
	&\le \|\na(\K(1-\phi))\|_\infty\|\rho_\sigma\|_1 
	\le C\|\rho_\sigma\|_1,
\end{align*}
where we note that $\K\mathrm{1}_{B_2} \in L^{p/(p-1)}$ if $p > d/(2s)$.
Then, in view of the regularity of $\rho_\sigma$ in Lemma \ref{lem.regulrho}, 
we find that
$$
  \|(\K_\zeta-\K)*\rho_\sigma\|_{L^\infty(0,T;L^\infty(\R^d))}
	\le C\zeta^{\afrak}.
$$


\subsection{Proof of \eqref{4.D2}.}

The $L^\infty(0,T;W^{2,1}(\R^d)\cap W^{3,q}(\R^d))$ bound for 
$\rho_{\sigma,\beta,\zeta}$ is shown in a similar way as
the corresponding bound for $\rho_\sigma$ in Lemma \ref{lem.regulrho}.


\section{Mean-field analysis}\label{sec.mean}

This section is devoted to the proof of Theorems \ref{thm.error} and \ref{thm.chaos}.

\subsection{Existence of density functions}\label{sec.dens}

We claim that the solution $\widehat{X}^N$ to \eqref{1.hatX} is absolutely
continuous with respect to the Lebesgue measure, which implies that this
process possesses a probability density $\widehat{u}\in L^\infty(0,T;L^1(\R^d))$.
The claim follows from \cite[Theorem 2.3.1]{Nua06} if the coefficients
of the stochastic differential equation \eqref{1.hatX}, satisfied by $\widehat{X}^N$, 
are globally Lipschitz continuous and of at most linear growth. 
The latter condition follows from
\begin{align*}
  |\na&\K*f_\sigma(\rho_\sigma(x,t))| 
	\le\|\K*\na f_\sigma(\rho_\sigma)\|_{L^\infty(0,T;L^\infty(\R^d))} \\
	&\le C\|\K*\na f_\sigma(\rho_\sigma)\|_{L^\infty(0,T;W^{1,p}(\R^d))}
	\le C\|\na f_\sigma(\rho_\sigma)\|_{L^\infty(0,T;W^{1,r}(\R^d))}
	\le C(\sigma),
\end{align*}
where $p>d$ and $r=dp/(d+2s)$ according to the Hardy--Littlewood--Sobolev inequality,
and we used the regularity bounds for $\rho_\sigma$ from Lemma \ref{lem.regul}.
The global Lipschitz continuity is a consequence of the mean-value theorem,  
the Hardy--Littlewood--Sobolev inequality, and the $W^{2,\infty}(\R^d)$ 
regularity of $\rho_\sigma$ from Lemma \ref{lem.regulrho}:
\begin{align*}
  \sup_{0<t<T}&\big|\na\K*f_\sigma(\rho_\sigma(x,t))
	-\na\K*f_\sigma(\rho_\sigma(y,t))\big|
  \le \sup_{0<t<T}\|\DD^2\K*f_\sigma(\rho_\sigma(\cdot,t))\|_\infty|x-y| \\
	&= \sup_{0<t<T}\big\|\K*\big(f''_\sigma(\rho_\sigma)\na\rho_\sigma
	\otimes\na\rho_\sigma + f'_\sigma(\rho_\sigma)\DD^2\rho_\sigma\big)(\cdot,t)
	\big\|_\infty|x-y| \le C(\sigma)|x-y|.
\end{align*}
Similar arguments show that $\bar{X}_i^N(t)$ has a density function
$\bar{u}\in L^\infty(0,T;L^1(\R^d))$.

Next, we show that $\widehat{u}$ and $\bar{u}$ can be identified with the
weak solutions $\rho_\sigma$ and $\rho_{\sigma,\beta,\zeta}$, respectively,
using It\^{o}'s lemma. Indeed, let 
$\phi\in C_0^\infty(\R^d\times[0,T])$. We infer from It\^{o}'s formula that
\begin{align*}
  \phi(&\widehat{X}_i^N(t),t) = \phi(\widehat{X}_i^N(0),0)
	+ \int_0^t\pa_s\phi(\widehat{X}_i^N(s),s)ds 
	+ \sigma\int_0^t\Delta\phi(\widehat{X}_i^N(s),s)ds \\
	&{}- \int_0^t\na\K*f_\sigma(\rho_\sigma(\widehat{X}_i^N(s),s))\cdot\na\phi
	(\widehat{X}_i^N(s),s) ds
	+ \sqrt{2\sigma}\int_0^t\na\phi(\widehat{X}_i^N(s),s)\cdot dB_i^N(s).
\end{align*}
Taking the expectation, the It\^{o} integral vanishes, and we end up with
\begin{align}
  \int_{\R^d}&\phi(x,t)\widehat{u}(x,t)dx
	= \int_{\R^d}\phi(x,0)\rho^0_\sigma(x)dx 
	+ \int_0^t\int_{\R^d}\pa_s\phi(x,s)\widehat{u}(x,s)dxds \nonumber  \\
	&{}+ \sigma\int_0^t\int_{\R^d}\Delta\phi(x,s)\widehat{u}(x,s)dxds
	- \int_0^t\int_{\R^d}\na\K*f_\sigma(\rho_\sigma(x,s))\cdot\na\phi(x,s)
	\widehat{u}(x,s)dxds. \label{5.weak}
\end{align}
Hence, $\widehat{u}$ is a very weak solution in the space $L^\infty(0,T;L^1(\R^d))$
to the linear equation
\begin{equation}\label{5.lin}
  \pa_t \widehat{u} = \sigma\Delta\widehat{u} 
	+ \diver(\widehat{u}\na\K*f_\sigma(\rho_\sigma)), \quad 
	\widehat{u}(0)=\rho_\sigma^0\quad\mbox{in }\R^d,
\end{equation}
where $\rho_\sigma$ is the unique solution to \eqref{1.rho}.

It can be shown that \eqref{5.lin} is uniquely solvable
in the class of functions in $L^\infty(0,T;L^1(\R^d))$. 
This implies that $\widehat{u}=\rho_\sigma$ in $\R^d\times(0,T)$
(and similarly $\bar{u}=\rho_{\sigma,\beta,\zeta}$).
The proof is technical but standard; see, e.g., \cite[Theorem 7]{CDHJ20} 
for a sketch of a proof.

Another approach is as follows. Because of the linearity of \eqref{5.weak},
it is sufficient to prove that $\widehat{u}\equiv 0$ in $\R^d\times(0,T)$
if $\rho_\sigma^0=0$. First, we verify that $v:=\na\K*f_\sigma(\rho_\sigma)\in
L^\infty(0,T;W^{1,\infty}(\R^d))$ and $\widehat{u}\in L^p(0,T;L^p(\R^d))$
for $p<d/(d-1)$. Then, by density, \eqref{5.weak} holds for all
$\phi\in W^{1,q}(0,T;L^q(\R^d))\cap L^q(0,T;W^{2,q}(\R^d))$ with $q>d$
and $\phi(T)=0$. Choosing $\psi$ to be the unique strong solution to the dual problem
$$
  \pa_t\psi+\sigma\Delta\psi = v\cdot\na \psi + g, \quad \psi(T)=0
	\quad\mbox{in }\R^d
$$
in the very weak formulation of \eqref{5.weak}, we find that 
$\int_0^T\int_{\R^d}\widehat{u}gdxdt=0$ for all $g\in C_0^\infty(\R^d\times(0,T))$,
which implies that $\widehat{u}=0$.


\subsection{Estimate of $X_i^N-\overline{X}_i^N$}

\begin{lemma}\label{lem.diff1}
Let $X_i^N$ and $\bar{X}_i^N$ be the solutions to \eqref{1.X} and
\eqref{1.barX}, respectively, and let $\delta\in(0,1/4)$. 
Under the assumptions of Theorem \ref{thm.error} on $\beta$ and $\zeta$, it holds that
$$
  \E\bigg(\sup_{0<s<T}\max_{i=1,\ldots,N}|(X_i^N-\bar{X}_i^N)(s)|\bigg) 
	\le CN^{-1/4+\delta}.
$$
\end{lemma}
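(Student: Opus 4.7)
\textbf{Proof plan for Lemma \ref{lem.diff1}.}
The strategy is Oelschl\"ager's framework for moderately interacting particles: the mean-field fluctuation is controlled via a classical law of large numbers applied to the auxiliary i.i.d.\ processes $\bar X_j^N$. Since \eqref{1.X} and \eqref{1.barX} share the Brownian motion $B_i^N$ and the initial datum $\xi_i$, subtracting the two SDEs cancels the stochastic term and yields
\begin{equation*}
  (X_i^N - \bar X_i^N)(t) = -\int_0^t\bigl[F_N^i(X_i^N(s),s) - F(\bar X_i^N(s),s)\bigr]\,ds,
\end{equation*}
where $F(x,s) := \na\K_\zeta*f_\sigma(W_\beta*\rho_{\sigma,\beta,\zeta}(\cdot,s))(x)$ and $F_N^i(x,s) := \na\K_\zeta*f_\sigma\bigl(N^{-1}\sum_{j\ne i}W_\beta(X_j^N(s)-\cdot)\bigr)(x)$. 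Inserting $\pm F(X_i^N,s)$ splits the integrand into a transport error $F(X_i^N)-F(\bar X_i^N)$ and a fluctuation error $F_N^i(X_i^N)-F(X_i^N)$.

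For the transport error, the mean value theorem gives $|F(X_i^N,s)-F(\bar X_i^N,s)|\le\|\na F(\cdot,s)\|_\infty |X_i^N-\bar X_i^N|(s)$. The Lipschitz constant is controlled by writing $\na F$ through Young's convolution inequality and using the regularity bounds \eqref{4.bound}--\eqref{4.D2} on $\rho_{\sigma,\beta,\zeta}$ together with the standard estimates $\|\DD^k W_\beta\|_\infty\lesssim\beta^{-d-k}$. The resulting constant grows polynomially in $\beta^{-1}$ and, after taking the maximum over $i$ and the supremum over $s$, produces the Gronwall contribution $C(\sigma)\beta^{-\mu_1}\int_0^t E_1(r)\,dr$.

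For the fluctuation error, I would use the local Lipschitz property of $f_\sigma$ (its derivative is bounded on the uniformly bounded range of the arguments) together with a Hardy--Littlewood--Sobolev-type estimate from Lemma \ref{lem.hls} to dominate
\begin{equation*}
  \bigl|F_N^i(x,s) - F(x,s)\bigr| \le C(\sigma)\,\zeta^{-\mu_3}\bigg\|(W_\beta*\rho_{\sigma,\beta,\zeta})(\cdot,s) - \frac{1}{N}\sum_{j\ne i}W_\beta(X_j^N(s)-\cdot)\bigg\|_{q}
\end{equation*}
for a suitable exponent $q$, the $\zeta^{-\mu_3}$ loss coming from the $L^{q'}$-norm of $\na\K_\zeta$. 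I would then split the empirical difference as $N^{-1}\sum_{j\ne i}[W_\beta(X_j^N-y)-W_\beta(\bar X_j^N-y)]$, bounded by $\|\na W_\beta\|_\infty\lesssim\beta^{-d-1}$ and feeding another Gronwall-type term, plus the genuine law of large numbers fluctuation $(W_\beta*\rho_{\sigma,\beta,\zeta})(y)-N^{-1}\sum_{j\ne i}W_\beta(\bar X_j^N-y)$. Since the $\bar X_j^N$ are i.i.d.\ with density $\rho_{\sigma,\beta,\zeta}$ (see Subsection \ref{sec.dens}), a standard variance computation yields $\E|\cdot|^2\le N^{-1}\|\rho_{\sigma,\beta,\zeta}\|_\infty\|W_\beta\|_2^2\lesssim N^{-1}\beta^{-d}$ pointwise in $y$. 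To upgrade this to the uniform-in-$y$ norm needed above, I would apply Sobolev embedding in $y$ and differentiate $W_\beta$ sufficiently many times in the centered sum (each derivative costing a factor $\beta^{-1}$), producing an expectation bound of the form $C(\sigma)\beta^{-\mu_2}\zeta^{-\mu_3}N^{-1/2}$.

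Collecting everything gives the integral inequality $E_1(t)\le C(\sigma)\beta^{-\mu_1}\int_0^t E_1(r)\,dr + C(\sigma,T)\beta^{-\mu_2}\zeta^{-\mu_3}N^{-1/2}$ announced in Subsection \ref{sec.main}. Gronwall's lemma then yields $E_1(T)\le C(\sigma,T)\beta^{-\mu_2}\zeta^{-\mu_3}N^{-1/2}\exp(C(\sigma)\beta^{-\mu_1}T)$. The logarithmic scaling $\beta^{-3d-7}\le\eps\log N$ from the hypothesis keeps the exponential factor below $N^{C\eps}$, absorbable into $N^\delta$ provided $\eps$ is sufficiently small depending on $\delta$, while the standing assumption $\zeta^{-2s-1}\le C_1 N^{1/4}$ renders $\zeta^{-\mu_3}N^{-1/2}\le N^{-1/4}$ for the relevant $\mu_3$. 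This yields the claimed rate $E_1(T)\le C\,N^{-1/4+\delta}$. The principal technical obstacle is the uniform-in-$y$ upgrade of the law of large numbers bound: naive pointwise variance estimates do not transfer to $L^\infty$ without accumulating several powers of $\beta^{-1}$, and keeping this loss compatible with the $N^{-1/4+\delta}$ target is precisely what forces the logarithmic, rather than merely algebraic, rescaling of $\beta$.
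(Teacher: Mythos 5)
Your overall architecture — same Brownian motion cancels, split into a Lipschitz/Gronwall piece and a law-of-large-numbers fluctuation piece, then logarithmic scaling of $\beta$ absorbs the Gronwall exponential while algebraic scaling of $\zeta$ controls the $N^{-1/2}$ fluctuation — matches the paper in spirit. But your specific telescoping has a gap that would destroy the rate if implemented as written.

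The problem is where the $\zeta$-dependence lands. You first apply a H\"older/HLS bound against $\na\K_\zeta$,
obtaining the factor $\zeta^{-\mu_3}$ (essentially $\|\na\K_\zeta\|_1\lesssim\zeta^{-2s-1}$, since for $s\le 1/2$ no $L^{q'}$-norm with $q'>1$ is finite near the origin), and only \emph{afterwards} split the empirical difference inside that norm into $N^{-1}\sum_j[W_\beta(X_j^N-\cdot)-W_\beta(\bar X_j^N-\cdot)]$ plus the genuine fluctuation. The first piece is bounded by $\beta^{-d-1}\max_j|X_j^N-\bar X_j^N|$ and, as you say, feeds back into Gronwall — but it still carries the $\zeta^{-\mu_3}$ prefactor from the earlier step. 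Your claimed inequality $E_1(t)\le C\beta^{-\mu_1}\int_0^t E_1\,dr+C\beta^{-\mu_2}\zeta^{-\mu_3}N^{-1/2}$ therefore has the wrong form: the Gronwall coefficient should read $C\zeta^{-\mu_3}\beta^{-\mu_1}$. Since $\zeta^{-2s-1}\le C_1 N^{1/4}$ is merely \emph{algebraic} in $N$, the resulting exponential $\exp(C T\zeta^{-\mu_3}\beta^{-\mu_1})\sim\exp(CN^{1/4}\log N)$ is catastrophic and no choice of $\eps$ saves the rate $N^{-1/4+\delta}$.

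The paper avoids this by reversing the order: the $X_j$-versus-$\bar X_j$ telescoping is done \emph{before} any norm of $\na\K_\zeta$ is taken, and the resulting Gronwall contributions ($I_{11},I_{12},I_{13}$) are estimated through $\|\DD^2\K_\zeta*\Psi\|_\infty$, $\int\K_\zeta|\DD^2W_\beta(\cdot+\xi)|$, and $\|\K_\zeta*\na W_\beta\|_\infty$, each controlled via the pointwise bound $\tildeK\le\K$ combined with a near/far split $\K=\K\mathrm{1}_{B_1}+\K\mathrm{1}_{B_1^c}$ (and, for the compactly supported $\DD^kW_\beta$, the local $L^\theta$/global $L^1$ pairing). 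All of these are $\zeta$-\emph{free}, so only powers of $\beta^{-1}$ enter Gronwall; the $\zeta^{-2s-1}$ from $\|\tildeK*\na W_\zeta\|_1$ appears exclusively in the additive fluctuation term $I_2$ where it multiplies $N^{-1/2}$. The paper even flags this explicitly: ``we did not use the compact support for $\tildeK$ ... because a negative exponent of $\zeta$ at this point would lead to a logarithmic connection between $\zeta$ and $N$.'' Your proof can be repaired by the same reordering — split $F_N^i-\bar F_N^i$ at the level of the SDE drift \emph{before} estimating against $\na\K_\zeta$, then apply the MVT to $\K_\zeta*\DD^2 W_\beta$ and use the near/far decomposition — but as written, the bookkeeping is wrong.

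Two smaller remarks. First, your ``transport error'' $F(X_i^N)-F(\bar X_i^N)$ has a Lipschitz constant that is actually \emph{uniform in $\beta$ and $\zeta$}, since $\na F=\K_\zeta*\DD^2 f_\sigma(W_\beta*\rho_{\sigma,\beta,\zeta})$ is controlled by the $\beta$-uniform regularity in Proposition \ref{prop.u}; the $\beta^{-\mu_1}$ you attribute to this term in fact comes only from the empirical-measure pieces. Second, your upgrade of the LLN variance to $L^\infty$ via Sobolev embedding in $y$ (each derivative costing $\beta^{-1}$) is workable but strictly less efficient than the paper's observation that one only needs $\sup_y\E|\sum_j b_{ij}(y,s)|$ rather than $\E\sup_y|\cdot|$: pulling the absolute value inside the $dy$-integral and using $\|\na\K_\zeta\|_1$ once lets the paper get away with a pointwise-in-$y$ second-moment estimate, no chaining or embedding required. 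The extra $\beta$-powers you pay are ultimately harmless because $\beta$ scales logarithmically, but the paper's route is cleaner.
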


\begin{proof}
To simplify the presentation, we set
$$
  \Psi(x,t) := f_\sigma\bigg(\frac{1}{N}\sum_{j=1,\,j\neq i}^{N}
	W_\beta(X^{N}_j(t)-x)\bigg),
	\ \bar\Psi(x,t) := f_\sigma\bigg(\frac{1}{N}\sum_{j=1,\,j\neq i}^{N}
	W_\beta(\bar{X}^{N}_j(t)-x)\bigg),
$$
and we write $\rho:=\rho_{\sigma,\beta,\zeta}$.
Taking the difference of equations \eqref{1.X} and \eqref{1.barX} in the
integral formulation leads to
\begin{align}\label{5.XbarX}
  \sup_{0<s<t}&|(X_i^N-\bar{X}_i^N)(s)|
	\le \int_0^t\big|\na\K_\zeta*\big(\Psi(X_i^N(s),s) 
	- f_\sigma(W_\beta*\rho(\bar{X}_i^N(s),s))\big)\big|ds \\
	&\le \int_0^t\big|\na\K_\zeta*\big(\Psi(X_i^N(s),s) - \bar\Psi(\bar{X}_i^N(s),s)
	\big)\big|ds \nonumber \\
	&\phantom{xx}{}+ \int_0^t\big|\na\K_\zeta*\big(\bar\Psi(\bar{X}_i^N(s),s)
	- f_\sigma(W_\beta*\rho(\bar{X}_i^N(s),s))\big)\big|ds
	=: I_1 + I_2. \nonumber
\end{align}

{\em Step 1: Estimate of $I_1$.} To estimate $I_1$, we formulate
$I_1=I_{11}+I_{12}+I_{13}$, where
\begin{align*}
  I_{11} &= \int_0^t\big|\na\K_\zeta*\big(\Psi(X_i^N(s),s) - \Psi(\bar{X}_i^N(s),s)
	\big)\big|ds, \\
	I_{12} &= \int_0^t\big|\na\K_\zeta*\big(\Psi(\bar{X}_i^N(s),s) 
	- \bar\Psi(X_i^N(s),s)\big)\big|ds, \\
	I_{13} &= \int_0^t\big|\na\K_\zeta*\big(\bar\Psi(X_i^N(s),s) 
	- \bar\Psi(\bar{X}_i^N(s),s)\big)\big|ds.
\end{align*}

We start with the first integral:
$$
  I_{11} \le \int_0^t\|\DD^2\K_\zeta*\Psi(\cdot,s)\|_\infty
	\sup_{0<r<s}\max_{i=1,\ldots,N}|(X_i^N-\bar{X}_i^N)(r)|ds.
$$
We claim that
\begin{equation}\label{5.DkK}
  \|\DD^k\K_\zeta*\Psi(\cdot,s)\|_\infty 
	\le C(\sigma)\beta^{-(k+1)(d+k)-1}, \quad k\in\N.
\end{equation}
For the proof, we introduce
$$
  \Phi(x,y) := f_\sigma\bigg(\frac{1}{N}\sum_{j=1}^{N-1}W_\beta(y_j-x)\bigg)
	\quad\mbox{for }x\in\R^d,\ y=(y_1,\ldots,y_{N-1})\in\R^{(N-1)d}.
$$
Then, by definition of $\K_\zeta$, 
$$
  \|\DD^k\K_\zeta*\Psi(\cdot,t)\|_\infty
	\le \sup_{y\in\R^{N-1}}\|W_\zeta*\K\omega_\zeta*\DD^k\Phi(\cdot,y)\|_\infty.
$$
We estimate the right-hand side:
\begin{align*}
  \|W_\zeta*(\K\omega_\zeta*\DD^k\Phi(\cdot,y))\|_\infty
	&\le \|W_\zeta\|_1\|\K\omega_\zeta*\DD^k\Phi(\cdot,y)\|_\infty 
	\le C\|\K\omega_\zeta*\DD^k\Phi(\cdot,y)\|_{W^{1,p}(\R^d)} \\
	&\le C\|\K*|\DD^k\Phi(\cdot,y)|\|_p + C\|\K*|\DD^{k+1}\Phi(\cdot,y)|\|_p \\
	&\le C\|\DD^k\Phi(\cdot,y)\|_r + C\|\DD^{k+1}\Phi(\cdot,y)\|_r,
\end{align*}
where we used the Hardy--Littlewood--Sobolev inequality for
$r=dp/(d+2ps)$ in the last step. It follows from the Fa\`a di Bruno formula,
after an elementary computation, that the last term is estimated according to
\begin{align*}
  \|\DD^{k+1}&\Phi(\cdot,y)\|_r^r = \int_{\R^d}\bigg|\DD^{k+1}\bigg(f_\sigma
	\bigg(\frac{1}{N}\sum_{j=1}^{N-1}W_\beta(y_j-x)\bigg)\bigg)\bigg|^r dx \\
	&\le C(k,N)\max_{\ell=1,\ldots,k+1}\|f_\sigma^{(\ell)}\|_\infty^r
	\|\DD^k W_\beta\|_\infty^{kr}\max_{0\le j\le k}\int_{\R^d}
	|\DD^{j+1}W_\beta(x)|^r dx \\
	&\le C(k,N)\max_{\ell=1,\ldots,k+1}\|f_\sigma^{(\ell)}\|_\infty^r
	\beta^{-(d+k)kr}\beta^{-(d+k+1)r+d} 
	\le C(k,N,\sigma)\beta^{-(d+k)(k+1)r-r},
\end{align*}
since $\|\DD^k W_\beta\|_\infty\le C\beta^{-(d+k)}$ and 
$\|\DD^{j+1}W_\beta\|_r\le C\beta^{-(d+j+1)+d/r}$. This verifies \eqref{5.DkK}.
We infer from \eqref{5.DkK} with $k=2$ that
$$
  I_{11} \le C\beta^{-3d-7}\int_0^t
	\sup_{0<r<s}\max_{i=1,\ldots,N}|(X_i^N-\bar{X}_i^N)(r)|ds.
$$

The term $I_{13}$ is estimated in a similar way, with $\Psi$ replaced by
$\bar\Psi$:
$$
  I_{13} \le C\beta^{-3d-7}\int_{0}^t
	\sup_{0<r<s}\max_{i=1,\ldots,N}|(X_i^N-\bar{X}_i^N)(r)|ds.
$$
The estimate of the remaining term $I_{12}$ is more involved. Since $W_\beta$
is assumed to be symmetric, we find that
\begin{align*}
  I_{12} &= \bigg|\int_0^t\int_{\R^d}\K_\zeta(y)\na\bigg\{f_\sigma\bigg(\frac{1}{N}
	\sum_{j=1,\,j\neq i}^N W_\beta(X_j^N(s)-\bar{X}_i^N(s)+y)\bigg) \\
	&\phantom{xx}{}- f_\sigma\bigg(\frac{1}{N}
	\sum_{j=1,\,j\neq i}^N W_\beta(\bar{X}_j^N(s)-X_i^N(s)+y)\bigg)\bigg\}dyds\bigg| \\
	&\le C\int_0^t\int_{\R^d}\K_\zeta(y)\bigg|f'_\sigma\bigg(\frac{1}{N}\sum_{j\neq i}
	W_\beta(X_j^N(s)-\bar{X}_i^N(s)+y)\bigg) \\
	&\phantom{xx}{}\times\frac{1}{N}\sum_{j\neq i}
	\na\big(W_\beta(X_j^N(s)-\bar{X}_i^N(s)+y)
	- W_\beta(\bar{X}_j^N(s)-X_i^N(s)+y)\big) \\
	&\phantom{xx}+ \bigg\{f'_\sigma\bigg(\frac{1}{N}\sum_{j\neq i}W_\beta
	(X_j^N(s)-\bar{X}_i^N(s)+y)\bigg) - f'_\sigma\bigg(\frac{1}{N}\sum_{j\neq i}W_\beta
	(\bar{X}_j^N(s)-X_i^N(s)+y)\bigg)\bigg\} \\
	&\phantom{xx}{}\times\frac{1}{N}\sum_{j\neq i}\na W_\beta(\bar{X}_j^N(s)-X_i^N(s)+y)
	\bigg|dyds \\
	&\le C\|f'_\sigma\|_\infty\int_0^t\sup_{0<s<t}\max_{i=1,\ldots,N}
	|(X_i^N-\bar{X}_i^N)(s)|
	\frac{1}{N}\sum_{j\neq i}\int_{\R^d}\K_\zeta(y)|
	\DD^2W_\beta(y+\xi_{ij}(s))|dyds \\
	&\phantom{xx}{}+ C\|f''_\sigma\|_\infty\int_0^t\sup_{0<s<t}\max_{i=1,\ldots,N}
	|(X_i^N-\bar{X}_i^N)(s)|\,\|\K_\zeta*\na W_\beta\|_\infty ds,
\end{align*}
where $\xi_{ij}(s)$ is a random value.
We write $\K^1=\K|_{B_1}$, $\K^2=\K|_{\R^d\setminus B_1}$ and note that 
$\tildeK \leq \K$ for all $\zeta >0$. Then
\begin{align*}
  \int_{\R^d}&\K_\zeta(y)|\DD^2W_\beta(y+\xi_{ij}(s))|dy
	\le \int_{B_{1+\zeta}}(\K^1*W_\zeta)(y)|\DD^2W_\beta(y+\xi_{ij}(s))|dy \\
	&\phantom{xx}{}
	+ \int_{\R^d\backslash B_{1-\zeta}}(\K^2*W_\zeta)(y)|\DD^2W_\beta(y+\xi_{ij}(s))|dy \\
	&\le \|\K^1*W_\zeta\|_{L^{\theta/(\theta-1)}(B_{1+\zeta})}
	\|\DD^2W_\beta(\cdot+\xi_{ij}(s))\|_{L^\theta(B_{1+\zeta})} \\
	&\phantom{xx}{}+ \|\K^2*W_\zeta\|_\infty\|\DD^2W_\beta(\cdot+\xi_{ij}(s))
	\|_{L^1(\R^d\backslash B_{1-\zeta})} \\
	&\le \|\K^1\|_{L^{\theta/(\theta-1)}(B_{1})}
	\|\DD^2W_\beta(\cdot+\xi_{ij}(s))\|_{L^\theta(B_{1+\zeta})}
	+ \|\K^2\|_\infty\|\DD^2W_\beta(\cdot+\xi_{ij}(s))\|_{L^1(\R^d)} \\
	&\le C\big(\|\DD^2W_\beta\|_\infty + \|\DD^2W_\beta\|_1\big) \le C\beta^{-d-2}.
\end{align*}
Observe that we did not use the compact support for $\tildeK$ 
(which depends on $\zeta$), because a negative exponent of $\zeta$ at this point 
would lead to a logarithmic connection between $\zeta$ and $N$ in the end, which
we wish to avoid.

Furthermore, by the convolution, Sobolev, and Hardy--Littlewood--Sobolev
inequalities as well as the fact that $|\tildeK* \nabla W_\beta|
= |(\K w_\zeta)*W_\zeta*\nabla W_\beta| \leq \K*|W_\zeta|*|\nabla W_\beta|$, 
\begin{align*}
  \|\K_\zeta*\na W_\beta\|_\infty &= \|W_\zeta*\tildeK*\na W_\beta\|_\infty
	\le \|\tildeK*\na W_\beta\|_\infty \le \|\tildeK*\na W_\beta\|_\infty \\
	&\le C\|\tildeK*\na W_\beta\|_{W^{1,p}(\R^d)} 
	\leq C(\|\K * |\nabla W_\beta |\|_p^p + \|\K * |D^2 W_\beta| \|_p^p)^{1/p} \\
	&\le C\|\na W_\beta\|_{W^{1,r}(\R^d)} \le C\beta^{-d-2+d/r},
\end{align*}
where we recall that $r>d/(2s)$ and we choose $p>d$ satisfying
$1/p=2s/d-1/r$. The previous two estimates lead to
$$
  I_{12} \le C(\sigma)\beta^{-d-2}\int_0^t
	\sup_{0<r<s}\max_{i=1,\ldots,N}|(X_i^N-\bar{X}_i^N)(r)|ds.
$$
We summarize:
\begin{equation}\label{5.I1}
  I_1 \le C(\sigma)\beta^{-3d-7}\int_0^t
	\sup_{0<r<s}\max_{i=1,\ldots,N}|(X_i^N-\bar{X}_i^N)(r)|ds.
\end{equation}

{\em Step 2: Estimate of $I_2$.} We take the expectation of $I_2$ and use the
mean-value theorem:
\begin{align}
  \E(I_2) &= \int_0^t\E\bigg|\int_{\R^d}\na\K_\zeta(y)\bigg\{f_\sigma\bigg(
	\frac{1}{N}\sum_{j\neq i}W_\beta(\bar{X}_j^N(s)-\bar{X}_i^N(s)+y)\bigg) 
	\label{5.I2aux} \\
	&\phantom{xx}{}- f_\sigma\big(W_\beta*\rho(\bar{X}_i^N(s)-y,s)\big)\bigg\}dy\bigg|ds 
	\nonumber \\
	&\le N^{-1}\|f'_\sigma\|_\infty\|\tildeK*\na W_\zeta\|_1\int_0^t\sup_{y\in\R^d}\E
	\bigg(\sum_{j\neq i}|b_{ij}(y,s)|\bigg)ds, \nonumber
\end{align}
where
$$
  b_{ij}(y,s) = W_\beta(\bar{X}_j^N(s)-\bar{X}_i^N(s)+y)
	- \frac{N}{N-1}W_\beta*\rho(\bar{X}_i^N(s)-y,s).
$$
We deduce from $\|\nabla W_\zeta\|_{L^1(\R^d)} \leq C \zeta^{-1}$ that 
$$
  \|\tildeK*\na W_\zeta\|_1\le C \zeta^{-1}\|\tildeK\|_1\le C\zeta^{-2s-1},
$$ 
due to the compact support of $\tildeK(x) = |x|^{2s-d}\omega_\zeta(x) 
\leq C|x|^{2s-d} \mathrm{1}_{|x|\leq 2\zeta^{-1}}$ and 
$$
  \int_{\{|x| < 2/\zeta\}} |x|^{2s-d} dx 
	= \int_{\{|y| < 2\}} \zeta^{-d}|y/\zeta|^{2s-d} dy = C\zeta^{-2s}.
$$

We claim that $\E(\sum_{j\neq i}|b_{ij}(y,s)|)\le C(\sigma)\beta^{-d/2}N^{1/2}$ 
for all $y \in \R^d$.
To show the claim, we compute the expectation $\E[(\sum_{j\neq i}b_{ij}(y,s))^2]$. 
We estimate first the terms with $k\neq j$ (omitting the argument $(y,s)$ to 
simplify the notation). Then an elementary but tedious computation leads to
\begin{align*}
  \E(b_{ji}b_{ki}) &= \int_{\R^d}\int_{\R^d}\int_{\R^d}\bigg(W_\beta(x_j-x_i+y)
	- \frac{N}{N-1}W_\beta*\rho(x_i-y)\bigg) \\
	&\phantom{xx}{}\times\bigg(W_\beta(x_k-x_i+y)	- \frac{N}{N-1}
	W_\beta*\rho(x_i-y)\bigg)\rho(x_i)\rho(x_j)\rho(x_k)dx_idx_jdx_k \\
	&= \int_{\R^d}\bigg(W_\beta*\rho(x_i-y) - \frac{N}{N-1}W_\beta*\rho(x_i-y)\bigg)^2
	\rho(x_i)dx_i \\
	&\le N^{-2}\|\rho\|_{L^\infty(0,T;L^\infty(\R^d))}
	\|W_\beta*\rho\|^2_{L^\infty(0,T;L^2(\R^d))} \\
	&\le C(\sigma)N^{-2}\|W_\beta\|_1^2 \le C(\sigma)N^{-2}.
\end{align*}
The diagonal terms contribute in the following way:
\begin{align*}
  \E(b_{ji}^2) &= \int_{\R^d}\int_{\R^d}\bigg(W_\beta(x_j-x_i+y)
	- \frac{N}{N-1}W_\beta*\rho(x_i-y)\bigg)^2\rho(x_i)\rho(x_j)dx_idx_j \\
	&= \int_{\R^d}\bigg((W_\beta^2*\rho)(x_i-y) 
	- \frac{2N}{N-1}(W_\beta*\rho)(x_i-y)^2 \\
	&\phantom{xx}{}
	+ \frac{N^2}{(N-1)^2}(W_\beta*\rho)(x_i-y)^2\bigg)\rho(x_i)dx_i \\
	&\le C(\sigma)\big(\|W_\beta^2*\rho\|_{L^\infty(0,T;L^1(\R^d))}
	+ \|W_\beta*\rho\|_{L^\infty(0,T;L^{2}(\R^d))}^2\big)
	\le C(\sigma)\beta^{-d},
\end{align*}
since $\|W_\beta^2*\rho\|_2\le\|W_\beta^2\|_1\|\rho\|_2 \le C\|W_\beta\|_2^2
\le\beta^{-d}C$. This shows that 
$$
  \E\bigg(\sum_{j\neq i}|b_{ji}(y, s)|\bigg)
	\le \bigg(\E\bigg[\sum_{j\neq i}b_{ji}(y, s)\bigg]^2\bigg)^{1/2}
	\le C(\sigma)\beta^{-d/2}N^{1/2}.
$$
We infer that \eqref{5.I2aux} becomes
\begin{equation}\label{5.I2}
  I_2 \le C(\sigma)\zeta^{-2s-1}\beta^{-d/2}N^{-1/2}.
\end{equation}

{\em Step 3: End of the proof.}
We insert \eqref{5.I1} and \eqref{5.I2} into \eqref{5.XbarX} to infer that
\begin{align*}
  E_1(t) &:= \E\bigg(\sup_{0<s<t}\max_{i=1,\ldots,N}|(X_i^N-\bar{X}_i^N)(s)|\bigg) \\
	&\le C(\sigma)\beta^{-3d-7}\int_0^t E_1(s)ds 
	+ C(\sigma)\zeta^{-2s-1}\beta^{-d/2}N^{-1/2}.
\end{align*}
By Gronwall's lemma,
$$
  E_1(t) \le C(\sigma)\zeta^{-2s-1}\beta^{-d/2}N^{-1/2}
  \exp\big(C(\sigma)\beta^{-3d-7}T\big),
	\quad 0\le t\le T.
$$
We choose $\eps=\widetilde{\delta}/(C(\sigma)T)$ for some arbitrary 
$\widetilde{\delta}\in(0,1/4)$.
Then, since by assumption, $\beta^{-d/2}\le\beta^{-3d-7}\le \eps\log N$ and
$\zeta^{-2s-1}\le C_1 N^{1/4}$, we find that
$$
  E_1(t) \le C(\sigma)C_1\eps\log(N)N^{-1/4}\exp\big(C(\sigma)T\eps\log N\big)
	= \frac{C_1\widetilde{\delta}}{T}\log(N)N^{-1/4+\widetilde{\delta}},
$$
proving the result.
\end{proof}


\subsection{Estimate of $\bar{X}_i^N-\widehat{X}_i^N$}

\begin{lemma}\label{lem.diff2}
Let $\bar{X}_i^N$ and $\widehat{X}_i^N$ be the solutions to \eqref{1.barX} and
\eqref{1.hatX}, respectively. Then there exists a constant $C>0$, depending on
$\sigma$, such that
$$
  \E\bigg(\sup_{0<t<T}\max_{i=1,\ldots,N}|(\bar{X}_i^N-\widehat{X}_i^N)(t)|\bigg)
	\le C(\beta+\zeta^{\afrak}),
$$
where $\afrak:= \min\{1,d-2s\}$.
\end{lemma}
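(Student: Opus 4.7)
The plan is to exploit the fact that, by construction, the SDEs \eqref{1.barX} and \eqref{1.hatX} are driven by the same Brownian motions and share initial data, so the two trajectories differ only through their drifts. Write
\begin{align*}
  F_{\beta,\zeta}(x,t) &:= \na\K_\zeta * f_\sigma\big(W_\beta*\rho_{\sigma,\beta,\zeta}(\cdot,t)\big)(x), \\
  F_0(x,t) &:= \na\K * f_\sigma\big(\rho_\sigma(\cdot,t)\big)(x).
\end{align*}
Then the integrated form of the two SDEs gives
$$
  (\bar X_i^N - \widehat X_i^N)(t) = -\int_0^t \big[F_{\beta,\zeta}(\bar X_i^N(s),s) - F_0(\widehat X_i^N(s),s)\big] ds,
$$
and inserting the intermediate quantity $F_{\beta,\zeta}(\widehat X_i^N(s),s)$ splits the integrand into a Lipschitz-in-$x$ term and a purely analytic error on the drift. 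First I would estimate each piece in $L^\infty_{t,x}$ uniformly in $i$, so that the stochastic character of the problem enters only through the trivial estimate $|(\bar X_i^N - \widehat X_i^N)(t)|\le \int_0^t\| F_{\beta,\zeta}(\cdot,s)-F_0(\cdot,s)\|_\infty\,ds + \int_0^t\|\DD_x F_{\beta,\zeta}(\cdot,s)\|_\infty |(\bar X_i^N-\widehat X_i^N)(s)|\,ds.$

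For the Lipschitz piece I would show that $\|\DD_x F_{\beta,\zeta}\|_{L^\infty((0,T)\times\R^d)}\le C(\sigma)$, uniformly in small $\beta,\zeta$. This reduces to estimating $\|\K_\zeta * \DD^2[f_\sigma(W_\beta*\rho_{\sigma,\beta,\zeta})]\|_\infty$, which I would control by the cutoff argument of Section \ref{sec.diffK} (splitting $\K=\K\mathbf{1}_{B_1}+\K\mathbf{1}_{\R^d\setminus B_1}$, using a Hardy--Littlewood--Sobolev estimate on the singular part and an $L^1$--$L^\infty$ estimate on the far part), together with the $\sigma$-dependent but $(\beta,\zeta)$-uniform bound \eqref{4.bound} on $\rho_{\sigma,\beta,\zeta}$ in $L^\infty(0,T;W^{2,p}(\R^d))$ (and the $W^{3,q}\cap W^{2,1}$ bound \eqref{4.D2} where needed).

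For the non-particle error $F_{\beta,\zeta}-F_0$ I would decompose
\begin{align*}
  F_{\beta,\zeta} - F_0 &= \na(\K_\zeta-\K)*f_\sigma(W_\beta*\rho_{\sigma,\beta,\zeta}) \\
  &\phantom{xx}{}+ \na\K*\big[f_\sigma(W_\beta*\rho_{\sigma,\beta,\zeta}) - f_\sigma(W_\beta*\rho_\sigma)\big] \\
  &\phantom{xx}{}+ \na\K*\big[f_\sigma(W_\beta*\rho_\sigma) - f_\sigma(\rho_\sigma)\big].
\end{align*}
The first contribution is the kernel mollification error; its $L^\infty$ norm is $\le C(\sigma)\zeta^{\afrak}$ by the same computation that yields \eqref{4.diffK}, applied to $f_\sigma(W_\beta*\rho_{\sigma,\beta,\zeta})$ (which enjoys regularity uniform in small $\beta,\zeta$ by \eqref{4.bound}). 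The second contribution uses the Lipschitz character of $f_\sigma$ on the uniform range of $W_\beta*\rho_{\sigma,\beta,\zeta}$ and $W_\beta*\rho_\sigma$, together with Young's convolution inequality and the $L^\infty$ part of \eqref{4.diff} (via Sobolev embedding $W^{2,p}\hookrightarrow L^\infty$ for $p>d/2$); a cutoff-and-HLS estimate on $\na\K$ then gives an $L^\infty$ bound of order $C(\sigma)(\beta+\zeta^{\afrak})$. The third contribution uses the Lipschitzness of $f_\sigma$ and Lemma \ref{lem.wbeta} to get $\|W_\beta*\rho_\sigma-\rho_\sigma\|_\infty\le C(\sigma)\beta$, combined with the same cutoff estimate for $\na\K*(\cdot)$, giving $C(\sigma)\beta$.

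Assembling these estimates, taking the maximum over $i$ (the constants are $i$-independent), the supremum over $[0,t]$, and the expectation, yields
$$
  E_2(t)\le C(\sigma)(\beta+\zeta^{\afrak}) + C(\sigma)\int_0^t E_2(s)\,ds,
$$
and Gronwall's lemma delivers the claim. The main obstacle in this plan is the kernel-error term in step three: one has to absorb the singularity of $\na\K$ while capturing the $\zeta^{\afrak}$ decay with the short-range behaviour of $\omega_\zeta$, and this requires essentially the same careful splitting and Hardy--Littlewood--Sobolev bookkeeping as in Section \ref{sec.diffK}, but now against $f_\sigma(W_\beta*\rho_{\sigma,\beta,\zeta})$ rather than $\rho_\sigma$, so one must check that the needed regularity survives uniformly in $\beta,\zeta$ by invoking \eqref{4.bound} and \eqref{4.D2}.
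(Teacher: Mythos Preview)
Your proposal is correct and follows the same overall strategy as the paper: subtract the two SDEs (same noise, same initial data), split into a Lipschitz-in-$x$ contribution and a purely analytic drift error, bound each uniformly in $i$, and close with Gronwall. The difference is only in how the analytic error is decomposed. The paper writes $J_1+J_2+J_3$ and keeps $\K_\zeta$ throughout the Lipschitz and density/mollifier comparison ($J_1$, $J_2$), placing the kernel error $\K_\zeta-\K$ last against the smoothest argument $f_\sigma(\rho_\sigma)$; you instead peel off $\na(\K_\zeta-\K)$ first, against $f_\sigma(W_\beta*\rho_{\sigma,\beta,\zeta})$, and then run the remaining two comparisons with the singular $\na\K$. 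Both orderings work precisely because Proposition~\ref{prop.u} supplies $(\beta,\zeta)$-uniform $W^{2,1}\cap W^{3,q}$ regularity for $\rho_{\sigma,\beta,\zeta}$, not just for $\rho_\sigma$; the paper's ordering has the minor advantage that the kernel-error step (the most delicate one) is carried out on the fixed function $\rho_\sigma$, so no uniformity in $(\beta,\zeta)$ has to be checked there. One point you glossed over and the paper makes explicit: estimating $\|\na\K*g\|_\infty$ for your second and third terms requires a case distinction $s<1/2$ versus $s\ge 1/2$ (the exponent bookkeeping in Hardy--Littlewood--Sobolev forces different routes, via boundedness of the Riesz transform $\na(-\Delta)^{-1/2}$ when $s\ge 1/2$); your ``cutoff-and-HLS'' sketch is right in spirit but would need this split to be made precise.
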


\begin{proof}
We compute the difference
\begin{align*}
  |(\bar{X}_i^N-\widehat{X}_i^N)(t)|
	&= \bigg|\int_0^t\big(\na\K_\zeta*f_\sigma(W_\beta*\rho(\bar{X}_i^N(s),s))
	- \na\K*f_\sigma(\rho_\sigma(\widehat{X}_i^N(s),s))\big)ds\bigg| \\
	&\le J_1+J_2+J_3,
\end{align*}
where $\rho:=\rho_{\sigma,\beta,\zeta}$, the convolution is taken with respect
to $x_i$, and
\begin{align*}
  J_1 &= \bigg|\int_0^t\na\K_\zeta*\big(f_\sigma(W_\beta*\rho(\bar{X}_i^N(s),s))
	- f_\sigma(W_\beta*\rho(\widehat{X}_i^N(s),s))\big)ds\bigg|, \\
	J_2 &= \bigg|\int_0^t\na\K_\zeta*\big(f_\sigma(W_\beta*\rho(\widehat{X}_i^N(s),s))
	- f_\sigma(\rho_\sigma(\widehat{X}_i^N(s),s))\big)ds\bigg|, \\
	J_3 &= \bigg|\int_0^t\na(\K_\zeta-\K)
	*f_\sigma(\rho_\sigma(\widehat{X}_i^N(s),s))ds\bigg|.
\end{align*}

{\em Step 1: Estimate of $J_1$.} We write $\na\K_\zeta*f_\sigma(\cdots)
=\K_\zeta*\na f_\sigma$ and add and subtract the expression
$f'_\sigma(W_\beta*\rho(\bar{X}_i^N-y))\na W_\beta*\rho(\widehat{X}_i^N-y)$:
\begin{align*}
  J_1 &= \int_0^t\int_{\R^d}\K_\zeta(y)\Big(f'_\sigma(W_\beta*\rho(\bar{X}_i^N(s)-y))
	\na W_\beta*\big[\rho(\bar{X}_i^N(s)-y)-\rho(\widehat{X}_i^N(s)-y)\big] \\
	&\phantom{xx}{}- \big[f'_\sigma(W_\beta*\rho(\widehat{X}_i^N(s)-y)) 
	- f'_\sigma(W_\beta*\rho(\bar{X}_i^N(s)-y))\big]
	\na W_\beta*\rho(\widehat{X}_i^N(s)-y)\Big)dyds \\
  &\le \|f'_\sigma\|_\infty\int_0^t\int_{\R^d}\Big|\K_\zeta(y)\na W_\beta*
	\big(\rho(\bar{X}_i^N(s)-y)-\rho(\widehat{X}_i^N(s)-y)\big)\Big|dyds \\
	&\phantom{xx}{}+ \|f''_\sigma\|_\infty
	\|\na W_\beta*\rho\|_{L^\infty(0,T;L^\infty(\R^d))} \\
	&\phantom{xx}{}\times
	\int_0^t\int_{\R^d}\big|\K_\zeta(y)W_\beta*\big(\rho(\widehat{X}_i^N(s)-y)
	- \rho(\bar{X}_i^N(s)-y)\big)\big|dyds.
\end{align*}
By the mean-value theorem and using $\|W_\beta\|_{1}=1$, 
we obtain for some random variable $\xi_{ij}(s)$,
\begin{align}
  J_1 &\le \|f_\sigma\|_{W^{2,\infty}(\R)}\|\na\rho\|_{L^\infty(0,T;L^\infty(\R^d))}
	\int_0^t\sup_{0<r<s}\sup_{i=1,\ldots,N}|(\bar{X}_i^N-\widehat{X}_i^N)(r)| 
	\label{5.J1aux} \\
	&\phantom{xx}{}\times
	\int_{\R^d}\sum_{k=1}^2\big|\K_\zeta(y)\DD^{k} W_\beta*\rho(y+\xi_{ij}(s),s)\big|dyds.
	\nonumber
\end{align}

We need to estimate the last integral. For this, we write for $k=1,2$
\begin{align*}
  & \int_{\R^d}\big|\K_\zeta(y)\DD^{k} W_\beta*\rho(y+\xi_{ij}(s),s)\big|dy
	\le K^{k}_1 + K^{k}_2, \quad\mbox{where} \\
	& K^{k}_1 := \int_{B_{1+ \zeta}}\big|\K^1* W_\zeta(y)\DD^{k} 
	W_\beta*\rho(y+\xi_{ij}(s),s)\big|dy, \\
	& K^{k}_2 := \int_{\R^d\setminus B_{1-\zeta}}
	\big|\K^2* W_\zeta(y)\DD^{k} W_\beta*\rho(y+\xi_{ij}(s),s)\big|dy,
\end{align*}
where $\K^1=\K|_{B_1}$ and $\K^2=\K|_{\R^d\setminus B_1}$.  
Note that $\tildeK \leq \K$. A similar argument as for the estimate of
$I_{12}$ in the proof of Lemma \ref{lem.diff1} shows that for 
$\theta > \max\{d/(2s),d \}$,
\begin{align*}
  K_1^k+K_2^k &\leq C\big(\|\DD^kW_\beta* \rho\|_{L^\infty(0,T; L^\theta(\R^d))} 
	+ \|\DD^kW_\beta* \rho\|_{L^\infty(0,T; L^1(\R^d))}\big) \\
	&\le C\big(\|\DD^k\rho\|_{L^\infty(0,T; L^\theta(\R^d))} 
	+ \|\DD^k\rho\|_{L^\infty(0,T; L^1(\R^d))}\big) \leq C(\sigma),
\end{align*}
where we used Proposition \ref{prop.u} (\eqref{4.bound} and \eqref{4.D2})  
with $p = \theta $ in the last inequality.
We conclude from \eqref{5.J1aux} that
\begin{equation}\label{5.J1}
  J_1 \le C(\sigma)\int_0^t\sup_{0<r<s}\max_{i=1,\ldots,N}
	|(\bar{X}_i^N-\widehat{X}_i^N)(r)|ds.
\end{equation}

{\em Step 2: Estimate of $J_2$.} We treat the two cases $s<1/2$ and $s \geq 1/2$ 
separately. Let first $s \geq 1/2$. Then
\begin{align*}
	J_2 &= \bigg|\int_0^t\na\tildeK*W_\zeta*\big(f_\sigma(W_\beta
	*\rho(\widehat{X}_i^N(s),s))
	- f_\sigma(\rho_\sigma(\widehat{X}_i^N(s),s))\big)ds\bigg| \\
	&\leq T \|\nabla \tildeK*(f_\sigma(W_\beta*\rho) - f_\sigma(\rho_\sigma))
	\|_{L^\infty(0,T; L^\infty(\R^d))}.
\end{align*}
Recalling the definition of $\tildeK = \K \omega_\zeta$ in \eqref{1.TildeK}
 and writing $\nabla \tildeK\ast u = \nabla\K\ast u -
[(1-\omega_\zeta)\nabla\K]\ast u + [\K\nabla\omega_\zeta]\ast u$ for 
$u= f_\sigma(W_\beta*\rho) - f_\sigma(\rho_\sigma)$, we find that
\begin{align}\label{5.J2_case1}
  J_2 &\leq C(T) \big(\|\nabla\K * u\|_{L^\infty([0,T]; L^\infty(\R^d))} 
	+ \|[(1-\omega_\zeta)\nabla\K] * u \|_{L^\infty([0,T]; L^\infty(\R^d))} \\
  &\phantom{xx}{}+ \|[\K\nabla\omega_\zeta] * u 
	\|_{L^\infty(0,T; L^\infty(\R^d))}\big). \nonumber
\end{align}
We estimate the right-hand side term by term. Because of
\begin{align*}
  \nabla\K * v = 
  \begin{cases}
  \nabla (-\Delta)^{-1/2}v & \mbox{for }s=1/2\\
  (\nabla\K)\ast v & \mbox{for }s>1/2,
  \end{cases}
\end{align*}
we use Sobolev's embedding $W^{1,p}(\R^d)\hookrightarrow L^\infty(\R^d)$ 
for any $p>d$ and then the boundedness of the Riesz operator 
$\nabla (-\Delta)^{-1/2}:L^p (\R^d)\to L^p(\R^d)$ 
\cite[Chapter IV, \S 3.1]{Ste70} in case $s=1/2$ 
or the Hardy--Littlewood--Sobolev inequality for 
$\widetilde{\alpha}= \alpha-1/2 >0$ (see Lemma \ref{lem.hls}) in case $s>1/2$ 
to control the first norm in \eqref{5.J2_case1} by
\begin{align*}
  \|\nabla\K * u&\|_{L^\infty(0,T; L^\infty(\R^d))} 
	\leq C\bigg(\|\nabla\K * u\|_{L^\infty(0,T; L^p(\R^d))}  
	+ \sum_{j=1}^d\|\nabla\K * D^ju\|_{L^\infty(0,T; L^p(\R^d))}\bigg) \\
  &\leq C \|u\|_{L^\infty(0,T; W^{1,r}(\R^d))} 
	= C \Vert f_\sigma(W_\beta*\rho) 
	- f_\sigma(\rho_\sigma) \Vert_{L^\infty(0,T; W^{1,r}(\R^d))},
\end{align*}
where $r=p$ in case $s=1/2$ and $r= pd/(d+2s-1)$ in case $s>1/2$. 
Choosing $p>d+(2s-1)$ guarantees that $r>d$ always holds.

For the second norm in \eqref{5.J2_case1}, H\" older's inequality yields for 
$q>d$ and $1/q + 1/q'=1$, for every $t>0$,
\begin{align*}
  \|[(1-\omega_\zeta)\nabla\K] * u(t)\|_{L^\infty(\R^d)} 
	&\leq \|1- \omega_\zeta\|_{L^\infty(\R^d)}
	\|\nabla \K\|_{L^{q'}(\{|x| > 2\zeta^{-1}\})}\|u(t)\|_{L^q(\R^d)} \\ 
	&\leq \|\nabla \K\|_{L^{q'}(\{ |x| > 2\zeta^{-1}\})}\|u(t)\|_{L^q(\R^d)},
\end{align*}
which can be bounded by $C \zeta^{1-2s+d/q}\|u(t)\|_{L^{q}(\R^d)}$, since
\begin{align*}
  \|\nabla \K\|_{L^{q'}(\{|x| > 2\zeta^{-1}\})}^{q'} 
	&\leq C \int_{\{|x| > 2\zeta^{-1}\}}|x|^{(2s-d-1)q'}dx 
	= C \zeta^{-d}\int_{\{|y| > 2\}} |y/\zeta|^{(2s-d-1)q'} dy \\
  &\leq C \zeta^{-d+(1+d-2s)q'}.
\end{align*}
By similar arguments and the fact that $\|\nabla\omega_\zeta\|_{L^\infty} 
\leq C \zeta$, we find that
$$
  \|\K \nabla \omega_\zeta\|_{L^{q'}(\{ |x| < 2\zeta^{-1}\})} 
	\leq C \zeta^{1+d-2s-d/q'},
$$
and hence, using $q' = q/(q-1)$, we conclude for the second and third term
in \eqref{5.J2_case1} that
$$
  \|[(1-\omega_\zeta)\nabla\K] * u(t)\|_{L^\infty(\R^d)} 
	+ \|[\K\nabla\omega_\zeta] * u(t)\|_{L^\infty(\R^d)}
	\leq C \zeta^{1-2s+d/q}\|u(t)\|_{L^q(\R^d)}.
$$
The choice $d<q \leq d/(2s-1)$ guarantees on the one hand that $q>d$ and 
on the other hand that the exponent $1-2s+d/q$ is strictly positive 
(which allows us to use the property $\zeta^{1-2s+d/q} <1$).

Using these estimates in \eqref{5.J2_case1}, we arrive (for $s \geq 1/2$) at
\begin{equation*}
  J_2 \leq C(T)\big(\|f_\sigma(W_\beta*\rho) - f_\sigma(\rho_\sigma) 
	\|_{L^\infty(0,T; W^{1,r}(\R^d))} + \|f_\sigma(W_\beta*\rho) 
	- f_\sigma(\rho_\sigma)\|_{L^\infty(0,T; L^q(\R^d))}\big),
\end{equation*}
where we recall that $r,q>d$.
These norms can be estimated by $\|f_\sigma(W_\beta*\rho(t)) 
- f_\sigma(\rho_\sigma(t))\|_{L^q(\R^d)} 
\leq \|f'_\sigma\|_{\infty}\|W_\beta * \rho(t) - \rho_\sigma(t)\|_{L^q(\R^d)}$ and 
\begin{align*}
  \|\nabla (f_\sigma(W_\beta * \rho) - f_\sigma(\rho_\sigma))(t)\|_{L^r(\R^d)} 
	&\leq \|f'_\sigma\|_{\infty}\|(W_\beta * \nabla\rho - \nabla\rho_\sigma)(t)
	\|_{L^r(\R^d)} \\
	&\phantom{xx}{}+ \|f_\sigma''\|_{\infty}\|(W_\beta \ast \rho - \rho_\sigma)(t)
	\|_{L^r(\R^d)}\|\nabla\rho_\sigma(t)\|_{L^\infty(\R^d)}.
\end{align*}
The $L^\infty(\R^d\times(0,T))$ bound for $\nabla \rho_\sigma$ from 
Lemma \ref{lem.regulrho} and the definition of $f_\sigma$ finally show for 
$s \geq 1/2$ and $r,q>d$ that
\begin{equation}\label{5.J_2_case1_3}
  J_2 \leq C(\sigma, T)\big(\|W_\beta*\rho - \rho_\sigma
	\|_{L^\infty(0,T; W^{1,r}(\R^d))} 
	+ \|W_\beta*\rho - \rho_\sigma\|_{L^\infty(0,T; L^q(\R^d))}\big).
\end{equation}

Now, let $s<1/2$. In this case, we cannot estimate $\na\K$ and put the gradient
to the second factor of the convolution.
Adding and subtracting an appropriate expression as in Step 1, 
using the embedding $W^{1,p}(\R^d)\hookrightarrow L^\infty(\R^d)$ for $p>d$,
the estimate $\K_\zeta \leq \K$, and the Hardy--Littlewood--Sobolev inequality, 
we find that
\begin{align*}
  J_2 &= \bigg|\int_0^t\int_{\R^d}\K_\zeta(y)
	\Big(\big(f'_\sigma(W_\beta*\rho(\widehat{X}_i^N(s)-y))
	- f'_\sigma(\rho_\sigma(\widehat{X}_i^N(s)-y))\big)
	\na W_\beta*\rho(\widehat{X}_i^N(s)-y) \\
	&\phantom{xx}{}
	- f'_\sigma(\rho_\sigma(\widehat{X}_i^N(s)-y))
	\big(\na\rho_\sigma(\widehat{X}_i^N(s)-y)
	- \na W_\beta*\rho(\widehat{X}_i^N(s)-y)\big)\Big)dyds\bigg| \\
	&\le \|f''_\sigma\|_\infty\|W_\beta*\na\rho\|_\infty\int_0^t\int_{\R^d}
	\K_\zeta(y)\big|\rho_\sigma(\widehat{X}_i^N(s)-y)
	- W_\beta*\rho(\widehat{X}_i^N(s)-y)\big|dyds \\
	&\phantom{xx}{}+ \|f'_\sigma\|_\infty\int_0^t\int_{\R^d}\K_\zeta(y)
	\big|\na\rho_\sigma(\widehat{X}_i^N(s)-y) 
	- W_\beta*\na\rho(\widehat{X}_i^N(s)-y)\big|dyds \\
  &\le \max\{\|\nabla \rho\|_{L^\infty(0,T; L^\infty(\R^d))},1\}
	\|f_\sigma'\|_{W^{1,\infty}}T\big(\|\K * |(W_\beta * \rho - \rho_\sigma)|
	\|_{L^\infty(0,T; L^\infty(\R^d))}  \\
	&\phantom{xx}{}+ \|\K * |(W_\beta * \nabla\rho - \nabla\rho_\sigma)| 
	\|_{L^\infty(0,T; L^\infty(\R^d))}\big) \\
	&\le C(\sigma,T)\big(\|\na\rho\|_{L^\infty(0,T;L^\infty(\R^d))}+1\big)
	\sum_{|\alpha|\le 2}\|W_\beta*\DD^\alpha\rho 
	- \DD^\alpha\rho_\sigma\|_{L^\infty(0,T;L^{r}(\R^d))},
\end{align*}
where $r>d$ is such that $1/r=2s/d+1/p$ (this is needed for the
Hardy--Littlewood--Sobolev inequality) and $p>d$ (because of Sobolev's emebdding). 
Note that $r>d$ can be only guaranteed if $s<1/2$. Together with the fact that 
$\|\na\rho\|_{L^\infty(0,T;L^\infty(\R^d))} \leq C(\sigma)$ 
(choose $q>d$ in \eqref{4.D2} and use Sobolev's embedding), 
this shows that for $s< 1/2$,
\begin{equation}\label{5.J2_case2}
  J_2 \leq C(\sigma,T) \sum_{|\alpha|\le 2}\|W_\beta*\DD^\alpha\rho 
	- \DD^\alpha\rho_\sigma\|_{L^\infty(0,T;L^{r}(\R^d))}.
\end{equation}

It follows from estimate \eqref{4.diff} and Lemma \ref{lem.wbeta} 
in Appendix \ref{sec.aux} for $p>d$ that
$$
  \|(W_\beta* D^\alpha \rho - D^\alpha \rho_\sigma)(t)\|_{L^p(\R^d)} 
  \leq C\big(\|D^\alpha \nabla \rho\|_{L^p(\R^d)} \beta + \beta 
	+ \zeta^{\afrak}\big) \leq C(\sigma,T)(\beta + \zeta^{\afrak}),
$$
where we used the $L^\infty(0,T; W^{3,p}(\R^d))$ estimate for 
$\rho = \rho_{\sigma,\beta,\zeta}$ in \eqref{4.D2}. Then
we deduce from estimates \eqref{5.J_2_case1_3} and \eqref{5.J2_case2} 
that for all $0< s < 1$,
\begin{equation*}
  J_2 \le C(\sigma,T)(\beta+\zeta^{\afrak}),
\end{equation*}
where we recall that $\afrak= \min\{1,d-2s\}$.

{\em Step 3: Estimate of $J_3$ and end of the proof.}
Arguing similarly as in Section \ref{sec.diffK}, we have
$$
  \|(\K_\zeta-\K)*\na\rho_\sigma\|_{L^\infty(0,T;L^\infty(\R^d))}
	\le C\zeta^{\afrak}\big(\|\DD^2\rho_\sigma\|_{L^\infty(0,T;L^{p}(\R^d))}
	+\|\na\rho_\sigma\|_{L^\infty(0,T;L^1(\R^d))}\big).
$$
This implies that
\begin{equation}\label{5.J3}
  J_3 \le \|f'_\sigma\|_\infty\|(\K_\zeta-\K)*\na\rho_\sigma
	\|_{L^\infty(0,T;L^\infty(\R^d))} \le C(\sigma)\zeta^{\afrak}.
\end{equation}
Taking the expectation, we infer from \eqref{5.J1}--\eqref{5.J3} that
\begin{align*}
  E_2(t) := \E\bigg(\sup_{0<s<t}\max_{i=1,\ldots,N}
	|(\bar{X}_i^N-\widehat{X}_i^N)(s)|\bigg) 
  \le C(\sigma)(\beta+\zeta^{\afrak}) + C(\sigma)\int_0^t E_2(s)ds,
\end{align*}
An application of Gronwall's lemma gives the result.
\end{proof}


\subsection{Proof of Theorems \ref{thm.error} and \ref{thm.chaos}}

Lemmas \ref{lem.diff1} and \ref{lem.diff2} show that
$$
  \E\bigg(\sup_{0<s<T}\max_{i=1,\ldots,N}|(X_i^N-\widehat{X}_i^N)(s)|\bigg)
	\le C(N^{-1/4+\delta} + \beta + \zeta^{\min\{1,d-2s\}}),
$$
and this expression converges to zero as $N\to\infty$ and $(\beta,\zeta)\to 0$
under the conditions stated in Theorem \ref{thm.error}. 
This result implies the convergence in probability of
the $k$-tuple $(X_1^N,\ldots,X_k^N)$ to $(\widehat{X}_1^N,\ldots,\widehat{X}_k^N)$. 
Since convergence in probability implies convergence in distribution, we obtain
$$
  \lim_{N\to\infty,\,(\beta,\zeta)\to 0}\mathrm{P}^k_{N,\beta,\sigma}(t)
	= \mathrm{P}^{\otimes k}_\sigma(t)\quad\mbox{locally uniform in time},
$$
where $\mathrm{P}^k_{N,\beta,\sigma}(t)$ and $\mathrm{P}^{\otimes k}_\sigma(t)$ denote 
the joint distributions of $(X_1^N,\ldots,X_k^N)(t)$ and 
$(\widehat{X}_1^N,\ldots,$ $\widehat{X}_k^N)(t)$, respectively. 
By Section \ref{sec.dens}, $\mathrm{P}_\sigma(t)$ is absolutely continuous
with the density function $\rho_\sigma(t)$. 
Using the test function $\phi=\mathrm{1}_{(-\infty, x]^d}$ 
in Corollary \ref{coro.cont}, we have, up to a subsequence,
$$
  \mathrm{P}_\sigma(t,(-\infty,x]^d) = \int_{(-\infty,x]^d}\rho_\sigma(y,t)dy
	\to  \int_{(-\infty,x]^d}\rho(y,t)dy =: \mathrm{P}(t,(-\infty,x]^d)
$$
locally uniformly for $t>0$. Since the convergence also holds for the 
initial condition, the result is shown. 


\begin{appendix}
\section{Auxiliary results}\label{sec.aux}

We recall some known results. 
The following result is proved in \cite[Theorem 4.33]{Bre11}.

\begin{lemma}[Young's convolution inequality]\label{lem.young}
Let $1\le p$, $r\le\infty$, $u\in L^p(\R^d)$, $v\in L^q(\R^d)$, and
$1/p+1/q=1+1/r$. Then $u*v\in L^r(\R^d)$ and
$$
  \|u*v\|_r \le \|u\|_p\|v\|_q.
$$
\end{lemma}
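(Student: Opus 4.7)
The plan is a standard three-exponent H\"older argument, with the degenerate exponents handled separately. First I would dispose of the easy boundary cases. If $r=\infty$ then $1/p+1/q=1$ and the pointwise bound $|(u*v)(x)|\le\|u\|_p\|v\|_q$ is just H\"older's inequality applied inside the convolution integral. If $p=1$ (symmetrically $q=1$) then $q=r$, and one obtains the inequality from Minkowski's integral inequality, viewing $u*v$ as the $L^q$-valued average of the translates $\tau_y v$ weighted by $u(y)$. These cases should be recorded explicitly because the exponents I will choose below become singular at the endpoints.

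For the main case $1<p,q,r<\infty$, the key is the factorization, valid pointwise in $y$,
\begin{equation*}
  |u(y)\,v(x-y)| = \bigl(|u(y)|^{p}\,|v(x-y)|^{q}\bigr)^{1/r}\cdot|u(y)|^{1-p/r}\cdot|v(x-y)|^{1-q/r}.
\end{equation*}
I would apply the three-exponent H\"older inequality in $y$ with exponents
\begin{equation*}
  r,\quad \alpha:=\frac{pr}{r-p},\quad \beta:=\frac{qr}{r-q}.
\end{equation*}
The critical arithmetic check is that $1/r+1/\alpha+1/\beta=1$; writing it out, this is $1/r+(1/p-1/r)+(1/q-1/r)=1/p+1/q-1/r$, which equals $1$ exactly by the hypothesis $1/p+1/q=1+1/r$. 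H\"older then yields
\begin{equation*}
  |(u*v)(x)| \le \Bigl(\textstyle\int|u(y)|^{p}|v(x-y)|^{q}dy\Bigr)^{1/r}\|u\|_{p}^{1-p/r}\|v\|_{q}^{1-q/r}.
\end{equation*}

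To finish, I would raise both sides to the $r$-th power, integrate in $x$, and invoke Fubini on the nonnegative double integral $\int\!\int|u(y)|^{p}|v(x-y)|^{q}dy\,dx=\|u\|_p^p\|v\|_q^q$ (translation invariance of Lebesgue measure). Collecting powers gives $\|u*v\|_r^r\le\|u\|_p^{r}\|v\|_q^{r}$, and the $r$-th root is the claim.

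The main obstacle is less conceptual than bookkeeping: one must verify the exponent identity $1/r+1/\alpha+1/\beta=1$ from the given relation $1/p+1/q=1+1/r$ and then confirm that the exponents $1-p/r\ge 0$ and $1-q/r\ge 0$ make sense (equivalently $p\le r$ and $q\le r$, both forced by $1/p+1/q=1+1/r\ge 1/p$ and $\ge 1/q$). A secondary point is to justify that $(u*v)(x)$ is well-defined a.e.\ before applying Fubini; the nonnegativity of the integrand in the factorization step makes Tonelli applicable without any a priori integrability assumption on $u*v$, which is then recovered as the output of the estimate.
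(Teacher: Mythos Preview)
Your proof is correct and is the standard three-exponent H\"older argument (essentially the one in the cited reference \cite[Theorem~4.33]{Bre11}). Note that the paper does not give its own proof of this lemma at all---it simply records the statement and defers to Br\'ezis---so there is nothing to compare against beyond confirming that your argument is sound, which it is.
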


The following lemma slightly extends \cite[Lemma 7.3]{Rou13} from the $L^2$ to
the $L^p$ setting.

\begin{lemma}\label{lem.embedd}
Let $p\ge 2$ and $T>0$. Then the following embedding is continuous:
$$
  L^p(0,T;W^{1,p}(\R^d))\cap W^{1,p}(0,T;W^{-1,p}(\R^d))\hookrightarrow
	C^0([0,T];L^p(\R^d)).
$$
\end{lemma}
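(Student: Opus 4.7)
The statement is an $L^p$-version of the classical Lions–Magenes lemma. Because $W^{1,p}(\R^d)\hookrightarrow L^p(\R^d)\hookrightarrow W^{-1,p}(\R^d)$ is a continuous (Gelfand-type) chain of embeddings and $L^p(\R^d)$ is reflexive for $1<p<\infty$, the Hilbert-space proof of \cite[Lemma 7.3]{Rou13} can be adapted in four steps: (i) temporal mollification, (ii) a chain-rule identity for $\|u\|_p^p$, (iii) passage to the limit in the mollifier, and (iv) an upgrade from weak to strong continuity.

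First I would extend $u$ to an open interval $(-\tau,T+\tau)$ by reflection, preserving the two regularity classes, and mollify in time: $u_\eps := \tilde u *_t \eta_\eps$. Then $u_\eps\to u$ in $L^p(0,T;W^{1,p}(\R^d))$ and $\pa_t u_\eps\to\pa_t u$ in $L^p(0,T;W^{-1,p}(\R^d))$ as $\eps\to 0$, while $u_\eps(\cdot,t)$ is smooth in $t$ with values in $W^{1,p}(\R^d)$. For such smooth approximants the classical chain rule gives
\begin{equation*}
  \|u_\eps(t)\|_p^p - \|u_\eps(s)\|_p^p
  = p\int_s^t\langle\pa_\tau u_\eps,|u_\eps|^{p-2}u_\eps\rangle d\tau,
\end{equation*}
where the duality is between $W^{-1,p}(\R^d)$ and $W^{1,p'}(\R^d)$ with $p'=p/(p-1)$. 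The test function is admissible because, using Hölder with conjugate exponents $(p-1)/(p-2)$ and $p-1$,
\begin{equation*}
  \||u_\eps|^{p-2}u_\eps\|_{p'} = \|u_\eps\|_p^{p-1}, \quad
  \|\nabla(|u_\eps|^{p-2}u_\eps)\|_{p'} \le (p-1)\|u_\eps\|_p^{p-2}\|\nabla u_\eps\|_p
\end{equation*}
(here is precisely where the hypothesis $p\ge 2$ enters, to ensure $(p-2)\ge 0$). Bounding the right-hand side above by Hölder in time then yields a uniform modulus of continuity for $t\mapsto\|u_\eps(t)\|_p^p$, and passage to the limit $\eps\to 0$ shows that $t\mapsto\|u(t)\|_p^p$ admits a continuous representative on $[0,T]$.

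Independently, from $\pa_t u\in L^p(0,T;W^{-1,p}(\R^d))$ one obtains $u\in C^0([0,T];L^p_w(\R^d))$ (weak continuity) by observing that, for each $\phi$ in a countable dense subset of $W^{1,p'}(\R^d)\cap L^{p'}(\R^d)$, the function $t\mapsto\int_{\R^d}u(t)\phi dx$ is Hölder continuous with uniform bound, then invoking Ascoli–Arzelà and a density/diagonal argument. Combining weak continuity with continuity of the norm $\|u(\cdot)\|_p$ and the Radon–Riesz property of $L^p(\R^d)$ for $1<p<\infty$ (weak convergence plus norm convergence implies strong convergence) promotes the continuity to $C^0([0,T];L^p(\R^d))$. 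The main technical point is Step (ii)–(iii): justifying the chain rule for $\|u\|_p^p$ outside the Hilbert setting; all other steps are straightforward adaptations of the $p=2$ argument.
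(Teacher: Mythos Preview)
Your proposal is correct and follows essentially the same route as the paper: both arguments hinge on the chain-rule identity $\frac{d}{dt}\|u\|_p^p = p\langle\pa_t u,|u|^{p-2}u\rangle$ together with the estimate $\||u|^{p-2}u\|_{W^{1,p'}}\le C\|u\|_{W^{1,p}}^{p-1}$ (valid for $p\ge 2$), then combine norm continuity with weak continuity via the Radon--Riesz/uniform-convexity property of $L^p$. The only notable difference is that you justify the chain rule carefully by temporal mollification, whereas the paper writes it directly; for the weak-continuity step the paper extracts a weakly convergent subsequence from boundedness in $L^p$ rather than invoking Ascoli--Arzel\`a, but this is a minor variation.
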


\begin{proof}
Let $u\in L^p(0,T;W^{1,p}(\R^d))\cap W^{1,p}(0,T;W^{-1,p}(\R^d))$ and 
$0\le t_1\le t_2\le T$. Then
\begin{align}
  \bigg|\int_{\R^d}|u(t_2)|^p dx - \int_{\R^d}|u(t_1)|^p dx\bigg|
	&= \bigg|\int_{t_1}^{t_2}\langle\pa_t u,p|u|^{p-2}u\rangle dt\bigg| \label{a.aux} \\
	&\le p\|\pa_t u\|_{L^p(t_1,t_2;W^{-1,p}(\R^d))}
	\||u|^{p-2}u\|_{L^{p'}(t_1,t_2;W^{1,p'}(\R^d))}, \nonumber
\end{align}
where $p'=p/(p-1)$. Direct computations using Young's inequality lead to 
\begin{align*}
  \||u|^{p-2}u\|_{L^{p'}(t_1,t_2;W^{1,p'}(\R^d))}^{p'}
	&= C\int_{t_1}^{t_2}\int_{\R^d}\big(|u|^p + |u|^{p'(p-2)}|\na u|^{p'}\big)dx dt \\
	&\le C\int_{t_1}^{t_2}\|u(t)\|_{W^{1,p}(\R^d)}^{p}dt.
\end{align*}
We infer from \eqref{a.aux} and the continuity of the integrals with respect to 
the time integration boundaries that 
$t\mapsto\|u(t)\|_p$ is continuous and
\begin{equation}\label{a.supu}
  \sup_{0<t<T}\|u(t)\|_p \le \|u(0)\|_p + C\|\pa_t u\|_{L^p(t_1,t_2;W^{-1,p}(\R^d))}
	+ C\|u\|_{L^p(0,T;W^{1,p}(\R^d))}.
\end{equation}

Next, let $t\in[0,T]$ be arbitrary and let $\tau_n\to 0$ as $n\to\infty$ such that
$t+\tau_n\in[0,T]$.  
Estimate \eqref{a.supu} implies that $(u(t+\tau_n))_{n\in\N}$ is
bounded in $L^p(\R^d)$. Thus, there exists a subsequence $(\tau_{n'})$ of $(\tau_n)$
such that $u(t+\tau_{n'})\rightharpoonup v(t)$ weakly in $L^p(\R^d)$ 
as $n'\to\infty$ for some $v(t)\in L^p(\R^d)$. We can show,
using estimate \eqref{a.supu} and dominated convergence for the integral
$$
  \int_0^T\int_{\R^d}(u(t+\tau_{n'},x)-v(t,x))\phi(t,x)dx\quad\mbox{for }
	\phi\in C_0^\infty(\R^d\times(0,T))
$$
that in the limit $n'\to\infty$
$$
  \int_0^T\int_{\R^d}(u(t,x)-v(t,x))\phi(t,x)dx = 0,
$$
which yields $v(t)=u(t)$.

Moreover, since $t\mapsto\|u(t)\|_p$ is continuous, we have
$\|u(t+\tau_{n'})\|_p\to\|u(t)\|_p$. Since $L^p(\R^d)$ is uniformly convex,
we deduce from \cite[Prop.~3.32]{Bre11} that $u(t+\tau_{n'})\to u(t)$
strongly in $L^p(\R^d)$. Since the limit is unique, the whole sequence converges.
Together with \eqref{a.supu}, this concludes the proof. 
\end{proof}

Let $W_1\in C_0^\infty(\R^d)$ be nonnegative with $\int_{\R^d}W_1(x)dx=1$ and
define $W_\beta(x)=\beta^{-d}W_1(x/\beta)$ for $x\in\R^d$ and $\beta>0$.

\begin{lemma}\label{lem.wbeta}
Let $1\le p<\infty$ and $u\in W^{1,p}(\R^d)$. Then
$$
  \|W_\beta*u-u\|_p \le C\beta\|\na u\|_p.
$$
\end{lemma}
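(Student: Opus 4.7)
The plan is to exploit the standard mollifier structure together with the fact that $\int_{\R^d} W_\beta(y)\,dy = 1$. By density of $C_c^\infty(\R^d)$ in $W^{1,p}(\R^d)$, I would first assume $u$ is smooth and recover the general statement at the end.

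First, using $\int W_\beta = 1$, I would write
\begin{equation*}
  (W_\beta * u - u)(x) = \int_{\R^d} W_\beta(y)\bigl(u(x-y) - u(x)\bigr)\,dy,
\end{equation*}
and apply the fundamental theorem of calculus along the segment from $x$ to $x-y$:
\begin{equation*}
  u(x-y) - u(x) = -\int_0^1 \nabla u(x - ty)\cdot y\,dt.
\end{equation*}
Combining these yields the pointwise estimate
\begin{equation*}
  |(W_\beta * u - u)(x)| \le \int_{\R^d} W_\beta(y)\,|y|\int_0^1 |\nabla u(x - ty)|\,dt\,dy.
\end{equation*}

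Next I would take the $L^p$ norm in $x$ and use Minkowski's integral inequality to move the $L^p$ norm inside both integrals. By translation invariance of the Lebesgue measure, $\|\nabla u(\cdot - ty)\|_p = \|\nabla u\|_p$, so
\begin{equation*}
  \|W_\beta * u - u\|_p \le \|\nabla u\|_p \int_{\R^d} W_\beta(y)\,|y|\,dy.
\end{equation*}
Finally, the change of variables $z = y/\beta$ gives
\begin{equation*}
  \int_{\R^d} W_\beta(y)\,|y|\,dy = \beta \int_{\R^d} W_1(z)\,|z|\,dz = C\beta,
\end{equation*}
since $W_1 \in C_c^\infty(\R^d)$ ensures the latter integral is finite. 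The extension from smooth $u$ to $u \in W^{1,p}(\R^d)$ follows by approximation, since both sides of the inequality are continuous with respect to the $W^{1,p}$ norm (here $p < \infty$ is used).

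There is no real obstacle here; the argument is textbook, and the only step requiring a bit of care is the application of Minkowski's integral inequality, which is justified because $W_\beta(y)|y|$ is compactly supported and $\nabla u \in L^p$. The restriction $p < \infty$ is essential only for the density step and the use of Minkowski.
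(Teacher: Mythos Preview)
Your proof is correct and follows essentially the same standard mollifier argument as the paper. The only cosmetic difference is that the paper first applies H\"older's inequality with the probability measure $W_\beta\,dy$ and then invokes the translation estimate $\|u(\cdot+z)-u\|_p\le |z|\|\nabla u\|_p$, whereas you unpack that translation estimate directly via the fundamental theorem of calculus and Minkowski; the two routes are interchangeable.
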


\begin{proof}
We use H\"older's inequality and the fact that $\|W_\beta\|_{L^1(\R^d)} =1$ 
to find that
\begin{align*}
  \|W_\beta*u-u\|_p^p &= \int_{\R^d}\bigg|\int_{\R^d}W_\beta(x-y)(u(x)-u(y))dy
	\bigg|^p dx \\
	&\le \int_{\R^d}\bigg(\int_{\R^d}W_\beta(x-y)dy\bigg)^{p-1}
  \bigg(\int_{\R^d}W_\beta(x-y)|u(x)-u(y)|^pdy\bigg)dx \\
  &= \int_{\R^d}\int_{\R^d}W_\beta(z)|z|^p\frac{|u(y+z)-u(y)|^p}{|z|^p}dydz \\
	&\le \|\na u\|_p^p\int_{\R^d}W_\beta(z)|z|^p dz \le C\beta^p\|\na u\|_p^p,
\end{align*}
which shows the lemma.
\end{proof}


\section{Fractional Laplacian}\label{sec.frac}

We recall that the fractional Laplacian $(-\Delta)^s$ for $0<s<1$ can be written as the 
pointwise formula
\begin{equation}\label{Def.FractionalLaplacian}
  (-\Delta)^s u(x) = c_{d,s}\int_{\R^d}\frac{u(x)-u(y)}{|x-y|^{d+2s}}dy,
	\quad\mbox{where }c_{d,s} = \frac{4^s\Gamma(d/2+s)}{\pi^{d/2}|\Gamma(-s)|},
\end{equation}
$u\in H^s(\R^d)$, and the integral is understood
as principal value if $1/2\le s<1$ \cite[Theorem 2]{Sti18}.
The inverse fractional Laplacian $(-\Delta)^{-s}$ is defined in \eqref{1.def}.
The following lemma can be found in \cite[Chapter V, Section 1.2]{Ste70}.

\begin{lemma}[Hardy--Littlewood--Sobolev inequality]\label{lem.hls}
Let $0<s<1$ and $1<p<\infty$. 
Then there exists a constant $C>0$ such that for all $u\in L^p(\R^d)$,
$$
  \|(-\Delta)^{-s}u\|_q\le C\|u\|_p, \quad\mbox{where }
	\frac{1}{p} = \frac{1}{q} + \frac{2s}{d}.
$$
\end{lemma}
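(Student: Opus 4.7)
By \eqref{1.def}, the operator reduces to the Riesz potential
$$
 (-\Delta)^{-s}u(x) = c_{d,-s}\int_{\R^d}|x-y|^{2s-d}u(y)\,dy,
$$
so it suffices to bound this convolution in $L^q(\R^d)$. I would follow Hedberg's classical argument. Note first that the relation $1/p = 1/q + 2s/d$ with $q<\infty$ forces $p<d/(2s)$; this is essential for the far-field estimate below.

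The first step is to split the integral at a radius $R>0$ which will be optimized pointwise. For the near-field I would use a dyadic decomposition of $\{|y|\le R\}$ into annuli $A_k=\{2^{-k-1}R<|y|\le 2^{-k}R\}$ and estimate
$$
 \int_{|y|\le R}|y|^{2s-d}|u(x-y)|\,dy
 \le \sum_{k\ge 0}(2^{-k-1}R)^{2s-d}\int_{A_k}|u(x-y)|\,dy
 \le C R^{2s}\,Mu(x),
$$
where $Mu$ is the Hardy--Littlewood maximal function and the geometric series converges since $2s>0$. For the far-field I would apply H\"older's inequality, using that $|y|^{(2s-d)p'}$ is integrable over $\{|y|>R\}$ precisely because $p<d/(2s)$:
$$
 \int_{|y|>R}|y|^{2s-d}|u(x-y)|\,dy
 \le \Bigl(\int_{|y|>R}|y|^{(2s-d)p'}\,dy\Bigr)^{1/p'}\|u\|_p
 \le C\,R^{2s-d/p}\|u\|_p.
$$

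Balancing the two bounds by choosing $R=(\|u\|_p/Mu(x))^{p/d}$ and using the Sobolev relation $1-2sp/d=p/q$, I would obtain Hedberg's pointwise inequality
$$
 |(-\Delta)^{-s}u(x)| \le C\,(Mu(x))^{p/q}\|u\|_p^{1-p/q}.
$$
Raising to the $q$-th power, integrating over $\R^d$, and applying the $L^p$-boundedness of $M$ (which is exactly where the hypothesis $p>1$ enters) yields
$$
 \|(-\Delta)^{-s}u\|_q^q \le C\|u\|_p^{q-p}\|Mu\|_p^{p} \le C\|u\|_p^q,
$$
which is the claim.

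\textbf{Main obstacle.} This is a classical result and there is no deep difficulty, but the argument hinges on two points that must be handled carefully: the dyadic bound of the near-field by $R^{2s}Mu(x)$ (which only works because the singularity $|y|^{2s-d}$ is locally integrable, i.e.\ $s>0$), and the convergence of the tail integral in the H\"older step, which is equivalent to $p<d/(2s)$ and is automatically encoded in the Sobolev scaling. The endpoint $p=1$ (requiring the weak-type $(1,d/(d-2s))$ estimate and Marcinkiewicz interpolation) and the endpoint $p=d/(2s)$ (where $q=\infty$ fails) lie outside the stated range and need not be addressed here.
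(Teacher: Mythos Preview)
Your argument is correct: this is Hedberg's classical proof of the Hardy--Littlewood--Sobolev inequality, and every step (dyadic near-field bound by the maximal function, H\"older on the far field using $p<d/(2s)$, pointwise optimization in $R$, and the $L^p$-boundedness of $M$ for $p>1$) is standard and accurately recalled.

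The paper, however, does not prove this lemma at all: it simply cites \cite[Chapter~V, Section~1.2]{Ste70} and states the result. So your proposal is not a different route to the same proof but rather a self-contained substitute for a reference. Stein's original argument in that chapter proceeds via the weak-type $(1,d/(d-2s))$ endpoint and Marcinkiewicz interpolation, whereas Hedberg's method, which you follow, bypasses interpolation entirely and gives the sharper pointwise bound $|(-\Delta)^{-s}u|\le C(Mu)^{p/q}\|u\|_p^{1-p/q}$. Both are perfectly adequate here; your approach has the advantage of being short and elementary once the maximal theorem is taken for granted.
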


Applying H\"older's and then Hardy--Littlewood--Sobolev's inequality
gives the following result.

\begin{lemma}\label{lem.hlsh}
Let $0<s<1$ and $1\le p<q<\infty$. 
Then there exists $C>0$ such that for all $u\in L^q(\R^d)$, $v\in L^r(\R^d)$,
\begin{align}
  \|u(-\Delta)^{-s}v\|_p &\le C\|u\|_q\|v\|_r, \quad 
	\frac{1}{q}+\frac{1}{r} = \frac{1}{p} + \frac{2s}{d}, \label{HLS1} \\
	\|u\na(-\Delta)^{-s}v\|_p &\le C\|u\|_q\|v\|_r, \quad 
	\frac{1}{q}+\frac{1}{r} = \frac{1}{p} + \frac{2s-1}{d},\ s>\frac12. \label{HLS2}
\end{align}
\end{lemma}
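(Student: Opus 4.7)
\medskip

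\noindent\textbf{Proposal for Lemma \ref{lem.hlsh}.} As the paragraph preceding the statement suggests, the plan is to combine H\"older's inequality with the classical Hardy--Littlewood--Sobolev inequality (Lemma \ref{lem.hls}).

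For \eqref{HLS1}, I would introduce the H\"older conjugate exponent $q'\in(1,\infty)$ defined by $1/p=1/q+1/q'$ (which is well defined since $p<q$ and $p\ge 1$ force $q'\in(1,\infty)$), apply H\"older's inequality to write
\[
  \|u(-\Delta)^{-s}v\|_p \le \|u\|_q\,\|(-\Delta)^{-s}v\|_{q'},
\]
and then invoke Lemma \ref{lem.hls} with source--target pair $(r,q')$ satisfying $1/r=1/q'+2s/d$. Eliminating $q'$ via $1/q'=1/p-1/q$ yields exactly the claimed relation $1/q+1/r=1/p+2s/d$, finishing the first bound.

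For \eqref{HLS2}, the additional ingredient is the observation that, in Fourier variables, the symbol of $\na(-\Delta)^{-s}$ is $i\xi|\xi|^{-2s}=i(\xi/|\xi|)|\xi|^{-(2s-1)}$, so that for $s>1/2$ one may factor
\[
  \na(-\Delta)^{-s}v = R\big[(-\Delta)^{-(s-1/2)}v\big],
\]
where $R$ denotes the (vector-valued) Riesz transform. Since $R$ is a Calder\'on--Zygmund operator, it is bounded on $L^{q'}(\R^d)$ for every $q'\in(1,\infty)$ (see, e.g., \cite[Chapter III]{Ste70}). Applying H\"older as before with the same $q'$, then the $L^{q'}$-boundedness of $R$, and finally Lemma \ref{lem.hls} for the fractional integral of order $s-1/2\in(0,1/2)$, one obtains
\[
  \|u\na(-\Delta)^{-s}v\|_p \le \|u\|_q\,\|R[(-\Delta)^{-(s-1/2)}v]\|_{q'}
  \le C\|u\|_q\,\|(-\Delta)^{-(s-1/2)}v\|_{q'} \le C\|u\|_q\|v\|_r,
\]
with the target relation $1/r=1/q'+(2s-1)/d$, which after eliminating $q'$ becomes $1/q+1/r=1/p+(2s-1)/d$, as required.

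There is no serious obstacle here: the only points that need a little care are checking that $q'\in(1,\infty)$ (automatic from $1\le p<q<\infty$) and that the input exponent $r$ to Hardy--Littlewood--Sobolev lies in $(1,\infty)$ (which is the implicit compatibility condition behind the scaling identities). The use of the Riesz transform to reduce $\na(-\Delta)^{-s}$ with $s>1/2$ to a fractional integral of positive order $s-1/2$ is the one step that is not purely a H\"older-plus-HLS manipulation, but it is a standard Fourier multiplier argument.
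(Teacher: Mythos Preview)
Your argument is correct and matches the paper's own approach: the text immediately preceding the lemma states only that it follows by ``applying H\"older's and then Hardy--Littlewood--Sobolev's inequality,'' which is exactly what you do. Your use of the Riesz transform factorization $\na(-\Delta)^{-s}=R\,(-\Delta)^{-(s-1/2)}$ for \eqref{HLS2} is a clean way to make precise the application of HLS at order $s-\tfrac12$; equivalently one can note directly that $|\na\K(x)|\le C|x|^{(2s-1)-d}$ and apply HLS to that kernel, which is presumably what the paper has in mind.
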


\begin{lemma}[Fractional Gagliardo--Nirenberg inequality I]\label{lem.GN1}
Let $d\ge 2$ and $1<p<\infty$. Then there exists
$C>0$ such that for all $u\in W^{1,p}(\R^d)$ or $u\in W^{2,p}(\R^d)$, respectively,
\begin{align*}
  \|(-\Delta)^s u\|_p &\le C\|u\|_p^{1-2s}\|\na u\|_p^{2s} \quad\mbox{if }
	0 < s \le 1/2, \\
  \|(-\Delta)^s u\|_p &\le C\|u\|_p^{1-s}\|\DD^2 u\|_p^{s} \quad\mbox{if }
  1/2 < s \le 1.
\end{align*}
\end{lemma}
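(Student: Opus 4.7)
The plan is to prove both inequalities by deriving a pointwise estimate of the form
\[
  |(-\Delta)^s u(x)| \le C\bigl(Mu(x)\bigr)^{1-\alpha}\bigl(M|\DD^k u|(x)\bigr)^{\alpha},
\]
where $M$ is the Hardy--Littlewood maximal operator, $k\in\{1,2\}$, and $\alpha\in\{2s,s\}$ matches the exponents in the statement. Combined with H\"older's inequality and the $L^p$ boundedness of $M$ for $1<p<\infty$, this delivers the asserted norm estimates.

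For the case $0<s<1/2$, I would start from the integral representation \eqref{Def.FractionalLaplacian} and split at a radius $R>0$. On $\{|y-x|<R\}$ the bound $|u(x)-u(y)|\le C|x-y|(M|\na u|(x)+M|\na u|(y))$, which follows from the fundamental theorem of calculus along segments, yields an inner contribution of order $R^{1-2s}M|\na u|(x)$. On $\{|y-x|\ge R\}$ a dyadic annular decomposition gives $CR^{-2s}Mu(x)$. Optimizing by choosing $R(x)=Mu(x)/M|\na u|(x)$ balances the two contributions and leads to
\[
  |(-\Delta)^s u(x)|\le C\bigl(M|\na u|(x)\bigr)^{2s}\bigl(Mu(x)\bigr)^{1-2s}.
\]
Raising to the $p$-th power, applying H\"older with conjugate exponents $1/(2s)$ and $1/(1-2s)$ (both are admissible because $s<1/2$), and then invoking $\|Mv\|_p\le C\|v\|_p$ for $1<p<\infty$ concludes this case.

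For $1/2<s<1$, the same scheme applies after exploiting the radial symmetry of the kernel $|x-y|^{-d-2s}$: the odd integrand $\na u(x)\cdot(y-x)$ has vanishing mean over any ball centered at $x$, so the principal value on $\{|y-x|<R\}$ may be rewritten with a second-order Taylor remainder. The pointwise estimate $|u(y)-u(x)-\na u(x)\cdot(y-x)|\le C|x-y|^2(M|\DD^2 u|(x)+M|\DD^2 u|(y))$ then yields an inner contribution of order $R^{2-2s}M|\DD^2 u|(x)$, while the tail is still $CR^{-2s}Mu(x)$. Optimizing $R$ gives $|(-\Delta)^s u(x)|\le C(M|\DD^2 u|(x))^s(Mu(x))^{1-s}$, and the $L^p$ conclusion follows as before. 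The endpoint $s=1$ is immediate since $(-\Delta)u$ is a linear combination of second derivatives, and $s=1/2$ can be handled via the Riesz-transform identity $(-\Delta)^{1/2}u=-\sum_{j=1}^d R_j\pa_j u$ together with the $L^p$ boundedness of the Riesz transforms for $1<p<\infty$ (alternatively, by a limiting argument from $s\nearrow 1/2$).

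The main obstacle I anticipate is the rigorous handling of the principal value in the case $s\ge 1/2$, especially the symmetry-based cancellation of the first-order Taylor term on the small ball: this must be carried out on truncated integrands to avoid circular use of the regularity of $u$, then passed to the limit. A secondary technical point is verifying that the pointwise optimization $R(x)=Mu(x)/M|\na u|(x)$ produces a measurable integrand and that degenerate loci where one of the maximal functions vanishes or blows up are controlled by a density/approximation argument. Neither is a genuine difficulty once the Hardy--Littlewood framework is in place, but both need to be written out carefully.
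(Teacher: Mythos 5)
Your proof is correct, but it takes a genuinely different route from the paper's. The paper quotes the boundedness of $(-\Delta)^s: W^{1,p}(\R^d)\to L^p(\R^d)$ (for $0<s\le 1/2$) and $(-\Delta)^s:W^{2,p}(\R^d)\to L^p(\R^d)$ (for $1/2<s\le 1$), which follow from Stein's theory of Riesz and Bessel potentials, to get the additive estimate $\|(-\Delta)^s u\|_p\le C(\|u\|_p+\|\DD^k u\|_p)$, and then upgrades this to the multiplicative form by the scaling substitution $u_\lambda(x)=\lambda^{d/p-2s}u(\lambda x)$ followed by optimization over $\lambda$. Your argument instead works directly on the singular-integral representation of $(-\Delta)^s$, splitting at radius $R$, bounding the two pieces pointwise by maximal functions, optimizing $R(x)$ pointwise, and finally applying H\"older and the $L^p$ boundedness of the Hardy--Littlewood maximal operator. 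Your route is more self-contained (it avoids appealing to Bessel-potential theory) and yields a stronger conclusion, namely the \emph{pointwise} Hedberg-type bound $|(-\Delta)^su(x)|\le C(Mu(x))^{1-2s}(M|\na u|(x))^{2s}$, from which the $L^p$ estimate is a corollary; the paper's route is considerably shorter once Stein's result is granted, and the scaling optimization over a single scalar $\lambda$ is less delicate than the pointwise optimization over $R(x)$.

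One small inaccuracy worth flagging: the Haj\l asz-type bound $|u(x)-u(y)|\le C|x-y|(M|\na u|(x)+M|\na u|(y))$, inserted naively into the inner integral, produces an $R^{1-2s}$ term multiplying $M(M|\na u|)(x)$ rather than $M|\na u|(x)$, because of the contribution with argument $y$. That is still harmless for the $L^p$ conclusion (iterated maximal operators remain bounded), but the cleaner and standard way to get $M|\na u|(x)$ directly is to decompose the inner region into dyadic shells $2^{-k-1}R\le|x-y|<2^{-k}R$ and use the mean-value bound $\int_{B_r(x)}|u(y)-u(x)|\,dy\le C r^{d+1}M|\na u|(x)$, which is what the FTC-along-segments calculation actually delivers after changing variables. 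The analogous remark applies in the regime $1/2<s<1$ with the second-order Taylor remainder. With this small adjustment the argument is tight; the endpoint treatments at $s=1/2$ (Riesz transforms) and $s=1$ (trivial) are exactly right.
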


\begin{proof}
It follows from the properties of the Riesz and Bessel potentials
\cite[Theorem 3, page 96]{Ste70} that the operator
$(-\Delta)^s:W^{1,p}(\R^d) \to L^p(\R^d)$ is bounded for $0<s\le 1/2$, while
the operator $(-\Delta)^s:W^{2,p}(\R^d) \to L^p(\R^d)$ is bounded for
$1/2<s\le 1$. Thus, if $0<s\le 1/2$,
$$
  \|(-\Delta)^s u\|_p \le C(\|u\|_p+\|\na u\|_p) \quad\mbox{for }u\in W^{1,p}(\R^d).
$$
Replacing $u$ by $u_\lambda(x)=\lambda^{d/p-2s}u(\lambda x)$ with $\lambda>0$ yields
$$
  \|(-\Delta)^s u\|_p 
	= \|(-\Delta)^s u_\lambda\|_p \le C(\|u_\lambda\|_p+\|\na u_\lambda\|_p)  
	= C\lambda^{-2s}(\|u\|_p+\lambda\|\na u\|_p).
$$
We minimize the right-hand side with respect to $\lambda$ giving the value
$\lambda_0=2s(1-2s)^{-1}\|u\|_p$ $\|\na u\|_p^{-1}$ and therefore,
$$
  \|(-\Delta)^s u\|_p \le C\|u\|^{1-2s}\|\na u\|_p^{2s}.
$$
The case $1/2<s\le 1$ is proved in a similar way.
\end{proof}

\begin{lemma}[Fractional Gagliardo--Nirenberg inequality II]\label{lem.GN2}
Let $d\ge 2$, $0<s\le 1/2$, $p\in(1,\infty)$, and $q\in[p,\infty)$. If
$p<d/(2s)$, we assume additionally that $q\le dp/(d-2sp)$. Then there exists
$C>0$ such that for all $u\in W^{1,p}(\R^d)$,
$$
  \|(-\Delta)^{-s}\na u\|_q \le C\|u\|_p^{1-\theta}\|\na u\|_p^\theta,
$$
where $\theta=1+d/p-d/q-2s\in[0,1]$.
\end{lemma}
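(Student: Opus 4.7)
The plan is to factorize the operator as $(-\Delta)^{-s}\nabla=(-\Delta)^{(1-2s)/2}R$, where $R:=-(-\Delta)^{-1/2}\nabla$ denotes the vector of Riesz transforms, and then apply the Hardy--Littlewood--Sobolev inequality (Lemma~\ref{lem.hls}) and the fractional Gagliardo--Nirenberg inequality (Lemma~\ref{lem.GN1}) in succession to the auxiliary function $v:=Ru$. Since the Riesz transforms are bounded on $L^p(\R^d)$ for every $1<p<\infty$ and commute with spatial derivatives, $v$ satisfies $\|v\|_p\le C\|u\|_p$ and $\|\nabla v\|_p\le C\|\nabla u\|_p$, so it suffices to control $\|(-\Delta)^{(1-2s)/2}v\|_q$ in terms of $\|v\|_p$ and $\|\nabla v\|_p$.

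In the endpoint case $q=p$ (where $\theta=1-2s$), I would invoke Lemma~\ref{lem.GN1} directly with parameter $s':=(1-2s)/2\in[0,1/2]$ applied to $v$, obtaining
$$
\|(-\Delta)^{-s}\nabla u\|_p=\|(-\Delta)^{(1-2s)/2}v\|_p\le C\|v\|_p^{2s}\|\nabla v\|_p^{1-2s}\le C\|u\|_p^{2s}\|\nabla u\|_p^{1-2s},
$$
which matches the statement (the degenerate case $s'=0$ at $s=1/2$ is trivial because $(-\Delta)^0=\mathrm{Id}$). In the case $q>p$, I would set $\alpha:=d(1/p-1/q)/2$, so that $\alpha+(1-2s)/2=\theta/2$ with $\theta=1+d/p-d/q-2s$. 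The hypotheses on $q$, combined with $s\le 1/2$, ensure $\alpha\in(0,s]\subset(0,1)$ and correspondingly $\theta/2\in(0,1/2]$. Hence Lemma~\ref{lem.hls} applied with exponent $\alpha$ gives
$$
\|(-\Delta)^{(1-2s)/2}v\|_q=\big\|(-\Delta)^{-\alpha}\bigl[(-\Delta)^{\theta/2}v\bigr]\big\|_q\le C\|(-\Delta)^{\theta/2}v\|_p,
$$
and a second application of Lemma~\ref{lem.GN1}, now with parameter $\theta/2$, yields $\|(-\Delta)^{\theta/2}v\|_p\le C\|v\|_p^{1-\theta}\|\nabla v\|_p^\theta$. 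Chaining these two inequalities with the Riesz transform bounds for $v$ produces the stated estimate.

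The main technical point is simply to verify that the admissibility conditions of the two lemmas are indeed encoded by the hypotheses of the statement: the upper constraint $q\le dp/(d-2sp)$ in the case $p<d/(2s)$ (and its absence when $p\ge d/(2s)$, since in that regime $1/p\le 2s/d$ forces $\alpha\le s$ automatically) corresponds exactly to $\theta\le 1$, equivalently $\alpha\le s$; while $q\ge p$ yields $\theta\ge 1-2s$ and hence $\theta>0$ whenever $q>p$. Once this bookkeeping is in place, the proof reduces to a two-step application of inequalities already available earlier in the paper.
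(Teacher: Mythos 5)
Your proof is correct, and it takes a genuinely different route from the paper's. The paper first reduces (by scaling, as in the proof of Lemma~\ref{lem.GN1}) to showing the inhomogeneous bound $\|(-\Delta)^{-s}\nabla u\|_q \le C(\|u\|_p + \|\nabla u\|_p)$, and then establishes that bound via a case distinction on whether $p < d/(2s)$ or $p \ge d/(2s)$: in the first case it proves the endpoint estimates at $q=p$ (through the Riesz transform and Bessel-potential identities of \cite{Ste70}) and at $q = dp/(d-2sp)$ (through Hardy--Littlewood--Sobolev), then interpolates; in the second case it introduces an auxiliary parameter $\lambda$ to generate all finite $q$ from Hardy--Littlewood--Sobolev plus the $q=p$ estimate. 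You instead peel off a Riesz transform once and for all, writing $(-\Delta)^{-s}\nabla = \pm(-\Delta)^{(1-2s)/2}R$ and setting $v=Ru$, and then handle the remaining positive power of $(-\Delta)$ by the single factorization $(-\Delta)^{(1-2s)/2}=(-\Delta)^{-\alpha}(-\Delta)^{\theta/2}$, chaining Lemma~\ref{lem.hls} and Lemma~\ref{lem.GN1}. This avoids re-running the scaling argument (it is already built into Lemma~\ref{lem.GN1}), avoids the interpolation step, and handles both regimes of $p$ uniformly; your bookkeeping that $\alpha=d(1/p-1/q)/2\in(0,s]$ and $\theta/2\in(0,1/2]$ under the lemma's hypotheses is exactly what is needed to invoke the two lemmas legitimately, and the degenerate cases ($q=p$ via Lemma~\ref{lem.GN1} with exponent $(1-2s)/2$, and $s=1/2$ trivially) are correctly disposed of. Both approaches are sound; yours is somewhat more economical.
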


\begin{proof}
The statement is true for $s=1/2$ since the operator
$(-\Delta)^{-1/2}\na:L^q(\R^d)\to L^q(\R^d)$ is bounded for any $q\in(1,\infty)$
\cite[Theorem 3, page 96]{Ste70}. Then the inequality follows from the
standard Gagliardo--Nirenberg inequality.  

Thus, let $0<s<1/2$. We claim that it is sufficient to prove that
$(-\Delta)^{-s}\na:W^{1,p}(\R^d)\to L^q(\R^d)$ is bounded. Indeed, assume that
\begin{equation}\label{a.bd}
  \|(-\Delta)^{-s}\na u\|_q\le C(\|u\|_p+\|\na u\|_p)
	\quad\mbox{for }u\in W^{1,p}(\R^d).
\end{equation}
Replacing, as in the proof of Lemma \ref{lem.GN1}, 
$u$ by $u_\lambda(x)=\lambda^{d/q-1+2s}u(\lambda x)$ with $\lambda>0$ yields
$$
  \|(-\Delta)^{-s}\na u\|_q 
	\le C\lambda^{-\theta}(\|u\|_p + \lambda\|\na u\|_p),
$$
where $\theta$ is defined in the statement of the theorem.
Minimizing the right-hand side with respect to $\lambda$ gives the value
$\lambda_0=\theta(1-\theta)^{-1}\|u\|_p\|\na u\|_p^{-1}$  
and therefore,
$$
  \|(-\Delta)^{-s}\na u\|_q \le C\|u\|_p^{1-\theta}\|\na u\|_p^{\theta}.
$$

It remains to show \eqref{a.bd}. To this end, we distinguish two cases.
First, let $p<d/(2s)$. By assumption, $p\le q\le r(1):= dp/(d-2sp)$. We apply
the Hardy--Littlewood--Sobolev inequality (Lemma \ref{lem.hls}) to find that
$$
  \|(-\Delta)^{-s}\na u\|_{r(1)} \le C\|\na u\|_p \le C(\|u\|_p+\|\na u\|_p).
$$
Furthermore, by using (in this order) the boundedness of 
$(-\Delta)^{-1/2}\na:L^p(\R^d)\to L^p(\R^d)$, Lemma 2 in \cite[page 133]{Ste70}, 
equation (40) in \cite[page 135]{Ste70}, 
and Theorem 3 in \cite[page 135f]{Ste70},
\begin{align}
  \|(-\Delta)^{-s}\na u\|_p &= \|\na(-\Delta)^{-1/2}(-\Delta)^{1/2-s}u\|_p
	\le C\|(-\Delta)^{1/2-s}u\|_p \label{a.p} \\
	&\le C\|(I-\Delta)^{1/2-s}u\|_p \le C\|(I-\Delta)^{1/2}u\|_p
	\le C(\|u\|_p+\|\na u\|_p). \nonumber
\end{align}
These inequalities hold for any $p\in(1,\infty)$. Now, it is sufficient to
interpolate with $1/q = \mu/p+(1-\mu)/r(1)$:
$$
  \|(-\Delta)^{-s}\na u\|_q \le \|(-\Delta)^{-s}\na u\|_p^\mu
	\|(-\Delta)^{-s}\na u\|_{r(1)}^{1-\mu}
	\le C(\|u\|_p+\|\na u\|_p).
$$

Second, let $p\ge d/(2s)$. We choose $\lambda\in(0,d/(2sp))\subset(0,1)$ and apply the
Hardy--Littlewoord--Sobolev inequality:
$$
  \|(-\Delta)^{-s}\na u\|_{r(\lambda)} 
	= \|(-\Delta)^{-\lambda s}(-\Delta)^{-(1-\lambda)s}\na u\|_{r(\lambda)}
	\le C\|(-\Delta)^{-(1-\lambda)s}\na u\|_p,
$$
where $r(\lambda)=dp/(d-2s\lambda p)$. Since $(1-\lambda)s<1/2$, we deduce from
\eqref{a.p} that
$$
  \|(-\Delta)^{-s}\na u\|_{r(\lambda)} \le C(\|u\|_p+\|\na u\|_p).
$$
Since $r(\lambda)\to \infty$ as $\lambda\to d/(2sp)$, the result follows.
\end{proof}


\section{Parabolic regularity}\label{sec.regul}

\begin{lemma}[Parabolic regularity]\label{lem.regul}
Let $1<p<\infty$, $T>0$ and let $u$ be the (weak) solution to the heat equation
$$
  \pa_t u - \Delta u = v, \quad u(0) = u^0\quad\mbox{in }\R^d,
$$
where $v\in L^p(0,T;L^p(\R^d))$ and $u^0\in W^{2,p}(\R^d)$. Then there exists $C>0$,
depending on $T$ and $p$,
such that 
\begin{equation}\label{a.D2u}
  \|\pa_t u\|_{L^p(0,T;L^p(\R^d))} + \|\DD^2u\|_{L^p(0,T;L^p(\R^d))}
	\le C\big(\|v\|_{L^p(0,T;L^p(\R^d))} + \|\DD^2u^0\|_{L^p(\R^d)}\big).
\end{equation}
Furthermore, if $v=\diver w$ for some $w\in L^p(0,T;L^p(\R^d;\R^d))$ then
\begin{equation}\label{a.D1u}
  \|\na u\|_{L^p(0,T;L^p(\R^d))} \le C\big(\|w\|_{L^p(0,T;L^p(\R^d))}
	+ T^{1/p}\|\na u^0\|_{L^p(\R^d)}\big).
\end{equation}
\end{lemma}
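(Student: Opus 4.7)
\textbf{Proof plan for Lemma \ref{lem.regul}.} The strategy is to split $u=u_{\mathrm{hom}}+u_{\mathrm{inh}}$, where $u_{\mathrm{hom}}(t)=e^{t\Delta}u^0$ solves the homogeneous Cauchy problem and $u_{\mathrm{inh}}(t)=\int_0^t e^{(t-s)\Delta}v(s)\,ds$ is given by Duhamel's formula, and then to invoke in each case the classical maximal $L^p$ regularity for the heat equation.

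For estimate \eqref{a.D2u}, the inhomogeneous part is handled by the standard parabolic maximal regularity theorem: the operator $v\mapsto (\pa_t u_{\mathrm{inh}},\DD^2 u_{\mathrm{inh}})$ is bounded on $L^p(0,T;L^p(\R^d))$ for every $1<p<\infty$. This is equivalent (via space-time Fourier transform) to the $L^p(\R^{d+1})$ boundedness of the Fourier multipliers $i\tau/(i\tau+|\xi|^2)$ and $\xi_i\xi_j/(i\tau+|\xi|^2)$, which satisfy the Mikhlin--H\"ormander conditions; alternatively one can invoke the classical parabolic Calder\'on--Zygmund theory (as presented e.g.\ in Lady\v{z}enskaja--Solonnikov--Ural'ceva). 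For the homogeneous part I would use $\pa_t u_{\mathrm{hom}}=\Delta u_{\mathrm{hom}}=e^{t\Delta}\Delta u^0$ together with the $L^p$-contractivity of the heat semigroup and integrate in $t$ to absorb the factor $T^{1/p}$ into the constant $C=C(T,p)$.

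For the divergence-form estimate \eqref{a.D1u}, I would split $\na u$ in the same way. The homogeneous piece $\na e^{t\Delta}u^0=e^{t\Delta}\na u^0$ is bounded in $L^p(\R^d)$ uniformly in $t$, so taking its $L^p$ norm in $t\in(0,T)$ immediately yields the $T^{1/p}\|\na u^0\|_p$ contribution. For the inhomogeneous piece, commuting $\diver$ with $e^{(t-s)\Delta}$ gives
\begin{equation*}
\na u_{\mathrm{inh}}(t)=\int_0^t \na\otimes\na\, e^{(t-s)\Delta}w(s)\,ds,
\end{equation*}
whose space-time Fourier symbol is $\xi\otimes\xi/(i\tau+|\xi|^2)$; this is again a Mikhlin multiplier, so the map $w\mapsto \na u_{\mathrm{inh}}$ is bounded $L^p(0,T;L^p(\R^d))\to L^p(0,T;L^p(\R^d))$ with a constant independent of $T$ (after the usual reduction by zero-extension in time to $\R^{d+1}$).

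The arguments are classical and there is no genuine obstacle, but the one point to be careful about is the dependence of the constants on $T$: the maximal regularity constant for the inhomogeneous Cauchy problem on $\R^d\times(0,T)$ can be taken independent of $T$ (by extending $v$ and $w$ by zero to negative times), while the only $T$-dependence in the final bound comes from the homogeneous part, which in \eqref{a.D1u} produces the stated factor $T^{1/p}$ and in \eqref{a.D2u} is absorbed into $C=C(T,p)$.
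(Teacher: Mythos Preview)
Your proposal is correct and follows essentially the same approach as the paper. Both arguments split $u$ into the homogeneous piece $e^{t\Delta}u^0$ and the Duhamel piece, invoke maximal $L^p$ regularity for the zero-initial-data problem (the paper simply cites Lieberman \cite{Lie96}, you spell out the Mikhlin/Calder\'on--Zygmund justification), and control the homogeneous piece via the $L^p$-contractivity of $e^{t\Delta}$. For \eqref{a.D1u} the paper packages the same idea by writing $u=e^{t\Delta}u^0+\diver U$ with $U$ the solution of $\pa_t U-\Delta U=w$, $U(0)=0$, and then applying \eqref{a.D2u} to $U$; this is exactly your multiplier $\xi\otimes\xi/(i\tau+|\xi|^2)$ in disguise, since $\na\diver U=\na u_{\mathrm{inh}}$.
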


\begin{proof}
We use a known result on the parabolic regularity for the equation
\begin{equation}\label{a.widehat}
  \pa_t\widehat{u} - \Delta\widehat{u} = v, \quad \widehat{u}(0)=0\quad
	\mbox{in }\R^d.
\end{equation}
It holds that \cite{Lie96}
\begin{equation}\label{a.D2}
  \|\pa_t \widehat{u}\|_{L^p(0,T;L^p(\R^d))} + \|\DD^2\widehat{u}\|_{L^p(0,T;L^p(\R^d))}
	\le C\|v\|_{L^p(0,T;L^p(\R^d))}.
\end{equation}
We apply this result to $\widehat{u}=u-e^{t\Delta}u^0$, where $e^{t\Delta}u^0$
is the solution to the homogeneous heat equation in $\R^d$ with initial datum $u^0$. 
Then $\widehat{u}$ solves \eqref{a.widehat} and satisfies estimate \eqref{a.D2}.
Inserting the definition of $\widehat{u}$ and observing that 
$\|\DD^2(e^{t\Delta}u^0)\|_p\le C\|\DD^2u^0\|_p$, we obtain \eqref{a.D2u}.

If $v=\diver w$ for some $w\in L^p(0,T;L^p(\R^d;\R^d))$, the uniqueness of solutions
to the heat equation yields $u=e^{t\Delta}u^0+\diver U$, where $U$ solves
$$
  \pa_t U - \Delta U = w, \quad U(0)=0\quad\mbox{in }\R^d.
$$
Then we deduce from the regularity result of \cite{Lie96} with $\widehat{u}=U$ and
$v=w$ that
$$
  \|\DD^2U\|_{L^p(0,T;L^p(\R^d))} \le C\|w\|_{L^p(0,T;L^p(\R^d))}.
$$
Since $\na u = e^{t\Delta}\na u^0+\na\diver U$, inequality \eqref{a.D1u}
follows.
\end{proof}

\end{appendix}



\begin{thebibliography}{11}

\bibitem{AmSe08} L.~Ambrosio and S.~Serfaty. A gradient flow approach to an evolution 
problem arising in superconductivity.
{\em Commun. Pure Appl. Math.} 61 (2008), 1495--1539.

\bibitem{BIK11} P.~Biler, C.~Imbert, and G.~Karch. Barenblatt profiles for a non 
local porous medium equation. {\em C. R. Acad. Sci. Paris, Ser. I} 349 (2011), 
641--645.


\bibitem{BSV15} M.~Bonforte, Y.~Sire, and J.~L.~Vazquez. Existence, uniqueness and 
asymptotic behaviour for fractional porous medium equations on bounded domains.
{\em Discrete Contin. Dyn. Sys.} 35 (2015), 5725--5767.

\bibitem{Bre11} H.~Br\'ezis. {\em Functional Analysis, Sobolev Spaces and Partial 
Differential Equations}. Springer, New York, 2011.

\bibitem{CGZ20} L.~Caffarelli, M.~Gualdani, and N.~Zamponi. Existence of weak 
solutions to a continuity equation with space time nonlocal Darcy law.
{\em Commun. Partial Differ. Eqs.} 45 (2020), 1799--1819.

\bibitem{CSV13} L.~Caffarelli, F.~Soria, F., and J.~L.~V\'azquez. Regularity of 
solutions of the fractional porous medium flow. 
{\em J. Eur. Math. Soc.} 15 (2013), 1701--1746. 

\bibitem{CaVa11a} L.~Caffarelli and J.~L.~V\'azquez. Asymptotic behaviour of a porous 
medium equation with fractional diffusion. 
{\em Discrete Contin. Dyn. Sys.} 29 (2011), 1393--1404.

\bibitem{CaVa11} L.~Caffarelli and J.~L.~Vazquez. Nonlinear porous medium flow
with fractional potential pressure. {\em Arch. Ration. Mech. Anal.} 202 (2011), 
537--565.

\bibitem{CHSV15} J.~A.~Carrillo, Y.~Huang, M.~C.~Santos, and J.~L.~Vazquez.
Exponential convergence towards stationary states for the 1D porous medium equation 
with fractional pressure. {\em J. Differ. Eqs.} 258 (2015), 736--763.

\bibitem{CRS96} J.~Chapman, J.~Rubinstein, and M.~Schatzman. A mean-field model
for superconducting vortices. {\em Eur. J. Appl. Math.} 7 (1996), 97--111.

\bibitem{CDHJ20} L.~Chen, E.~S.~Daus, A.~Holzinger, and A.~J\"ungel. Rigorous 
derivation of population cross-diffusion systems from moderately interacting 
particle systems. Submitted for publication, 2020. arXiv:2010.12389.

\bibitem{CDJ19} L.~Chen, E.~S.~Daus, and A.~J\"ungel. Rigorous mean-field limit and 
cross diffusion. {\em Z. Angew. Math. Phys.} 70 (2019), no. 122, 21 pages. 

\bibitem{ChJe21} Y.-P.~Choi and I.-J.~Jeong. Relaxation to fractional porous medium 
equation from Euler--Riesz system. Submitted for publication, 2021. arXiv:2102.01817.

\bibitem{ChJe21a} Y.-P.~Choi and I.-J.~Jeong. Classical solutions for fractional 
porous medium flow. Submitted for publication, 2021. arXiv:2102.01816.

\bibitem{CiJa11} S.~Cifani and E.~Jakobsen. Entropy solution theory for fractional 
degenerate convection-diffusion equations. 
{\em Ann. Inst. H. Poincar\'e Anal. Non Lin\'eaire} 28 (2011), 413--441.

\bibitem{DGZ20} E.~S.~Daus, M.~Gualdani, and N.~Zamponi. Long time behavior and 
weak-strong uniqueness for a nonlocal porous media equation. 
{\em J. Differ. Eqs.} 268 (2020), 1820--1839.

\bibitem{DPR20} E.~S.~Daus, M.~Ptashnyk, and C.~Raithel. Derivation of a fractional 
cross-diffusion system as the limit of a stochastic many-particle system driven by 
L\'evy noise. Submitted for publication, 2020. arXiv:2006.00277.

\bibitem{Due16} M.~Duerinckx. Mean-field limits for some Riesz interaction 
gradient flows. {\em SIAM J. Math. Anal.} 48 (2016), 2269--2300.

\bibitem{E94} W.~E. Dynamics of vortex liquids in Ginzburg--Landau theories with 
applications in superconductivity. {\em Phys. Rev. B} 50 (1994), 1126--1135.

\bibitem{FeNo09} E. Feireisl A. Novotn\'y. {\em Singular Limits in Thermodynamics 
of Viscous Fluids}. Birkh\"auser, Basel 2009.

\bibitem{FiPh08} A.~Figalli and R.~Philipowski. Convergence to the viscous
porous medium equation and propagation of chaos. {\em Alea} 4 (2008), 185--203.

\bibitem{FoMe15}
J.~Fontbona and S.~M\'el\'eard. Non local Lotka--Volterra system with cross-diffusion
in an heterogeneous medium. {\em J. Math. Biol.} 70 (2015), 829--854.

\bibitem{Gol03} F.~Golse. The mean-field limit for the dynamics of large particle
systems. {\em Journ\'ees Equations aux d\'eriv\'ees partielles} (2003), 1--47.


\bibitem{IRS12}
K.~Ichikawa, M.~Rouzimaimaiti, and T.~Suzuki. Reaction diffusion equation with
non-local term arises as a mean field limit of the master equation.
{\em Discrete Cont. Dyn. Sys. S} 5 (2012), 115--126.

\bibitem{ITV20} C.~Imbert, R.~Tarhini, and F.~Vigneron. Regularity of solutions of a 
fractional porous medium equation. {\em Interfaces Free Bound.} 22 (2020), 401--442.

\bibitem{JaWa17} P.-E.~Jabin and Z.~Wang. Mean field limit for stochastic particle
systems. In: N.~Bellomo, P.~Degond, and E.~Tadmor, {\em Active Particles}, vol.~1, 
pp.~379--402. Springer, Boston, 2017.

\bibitem{JoMe98} B.~Jourdain and S.~M\'{e}l\'{e}ard. Propagation of chaos and 
fluctuations for a moderate model with smooth initial data. 
{\em Ann. Inst. H. Poincar\'{e} Probab. Stat.} 34 (1998), 727--766.


\bibitem{Lie96} G.~M.~Lieberman. {\em Second Order Parabolic Differential Equations}.
World Scientific, Singapore, 1996.

\bibitem{Lis20} S.~Lisini. Fractional high order thin film equation: gradient flow 
approach. Submitted for publication, 2020. arXiv:2007.00459.

\bibitem{LMS18} S.~Lisini, E.~Mainini, and A.~Segatti. A gradient flow approach
to the porous medium equation with fractional pressure. 
{\em Arch. Ration. Mech. Anal.} 277 (2018), 567--606.

\bibitem{NgVa18} Q.-H.~Nguyen and J.~L.~Vazquez. Porous medium equation with nonlocal
pressure in a bounded domain. {\em Commun. Partial Differ. Eqs.} 43 (2018), 1502--1539.

\bibitem{Nua06} D.~Nualart. {\em The Malliavin Calculus and Related Topics}.
Springer, Berlin, 2006.

\bibitem{Oel84}
K.~Oelschl\"ager. A martingale approach to the law of large numbers for weakly
interacting stochastic processes. {\em Ann. Probab.} 12 (1984), 458--479.

\bibitem{Oel89} K.~Oelschl\"ager. On the derivation of reaction-diffusion equations
as limit dynamics of systems of moderately interacting stochastic processes.
{\em Probab. Th. Rel. Fields} 82 (1989), 565--586.

\bibitem{Oel90} K.~Oelschl\"ager. Large systems of interacting particles and the
porous medium equation. {\em J. Differ. Eqs.} 88 (1990), 294--346.

\bibitem{ORT21} C.~Olivera, A.~Richard, and M.~Toma\v{s}evi\'c. Quantitative particle 
approximation of nonlinear Fokker--Planck equations with singular kernel.
Submitted for publication, 2020. arXiv:2011.00537.

\bibitem{PQRV18} A.~de Pablo, F.~Quiros, A.~Rodriguez, and J.~L.~V\'azquez.
A general fractional porous medium equation. 
{\em Commun. Pure Appl. Math.} 65 (2012), 1242--1284.

\bibitem{DeG19} L.~P{\l}ociniczak. Derivation of the nonlocal pressure form of the 
fractional porous medium equation in the hydrological setting. 
{\em Commun. Nonlin. Sci. Numer. Simul.} 76 (2019), 66--70.

\bibitem{Rou13} T.~Roub\'{\i}\v{c}ek. {\em Nonlinear Partial Differential Equations
with Applications}. Birkh\"auser, Basel, 2013.


\bibitem{Ser17} S.~Serfaty. Mean-field limits of the Gross--Pitaevskii and parabolic
Ginzburg--Landau equations. {\em J. Amer. Math. Soc.} 30 (2017), 713--768.

\bibitem{Ser20} S.~Serfaty. Mean field limit for Coulomb-type flows.
{\em Duke Math. J.} 169 (2020), 2887--2935.

\bibitem{Szn84} A.~S.~Sznitman. Nonlinear reflecting diffusion process, and the
propagation of chaos and fluctuations associated. {\em J. Funct. Anal.} 56 (1984),
311--336.

\bibitem{Szn91} A.~S.~Sznitman. Topics in propagation of chaos. In:
P.~L.~Hennequin (ed.), {\em \'{E}cole d'\'{E}t\'{e} de Probabilit\'{e}s de Saint-Flour}
XIX-1989, Lect. Notes Math. 1464, pp.~165--251. Berlin, Springer, 1991.

\bibitem{STV14} D.~Stan, F.~del Teso, and J.~L.~V\'azquez. Finite and infinite speed 
of propagation for porous medium equations with fractional pressure. 
{\em C. R. Acad. Sci. Paris, Ser. I} 352 (2014), 123--128.

\bibitem{Ste70} E.~Stein. {\em Singular Integrals and Differentiability Properties
of Functions}. Princeton University Press, Princeton, 1970.

\bibitem{Sti18} P.~R.~Stinga. User's guide to the fractional Laplacian and the method 
of semigroups. In: A.~Kochubei and Y.~Luchko (eds.), {\em Handbook of Fractional
Calculus with Applications}, vol.~2, pp.~235--266. De Gruyter, Berlin, 2019.

\bibitem{Vaz12} J.~L.~V\'azquez. Nonlinear diffusion with fractional Laplacian 
operators. In: H.~Holden and K.~Karlsen (eds.). {\em Nonlinear Partial Differential 
Equations}, Abel Symposia, vol.~7, pp.~1--27. Springer, Berlin, 2012.

\bibitem{Vaz17} J.~L.~V\'azquez. The mathematical theories of diffusion.
Nonlinear and fractional diffusion. In: M.~Bonforte and G.~Grillo (eds.),
{\em Nonlocal and Nonlinear Diffusions and Interactions: New Methods and Directions}. 
Lect. Notes Math. 2186, pp.~205--278. Springer, Cham, 2017.

\end{thebibliography}
\end{document}